\theoremstyle{theorem}
\newtheorem{theorem}{Theorem}[section]
\newtheorem{proposition}[theorem]{Proposition}
\newtheorem{lemma}[theorem]{Lemma}
\newtheorem{remark}[theorem]{Remark}
\newtheorem{corollary}[theorem]{Corollary}
\numberwithin{equation}{section}
\theoremstyle{plain}
\newtoks\thehProclaim
\newtheorem*{Proclaim}{\the\thehProclaim}
\def\supind#1{${}^\mathrm{#1}$}
\begin{document}

\title[Homogenization of nonlocal convolution type operators]
{Homogenization of nonlocal convolution type operators:
Approximation for the resolvent \\with corrector}

\author[A.~Piatnitski, V.~Sloushch, T.~Suslina, E.~Zhizhina]{A.~Piatnitski\supind{1,2,3}, V.~Sloushch\supind{4}, T.~Suslina\supind{4}, E.~Zhizhina\supind{1,2}}

\address{\supind{1}{Institute of Information Transmission Problems of RAS, Bolshoi Karetnyi 19,  Moscow, 127051, Russia}}

\address{\supind{2}The Arctic university of Norway, campus Narvik, P.O. Box 385, 8505 Narvik, Norway}

\address{\supind{3}People's Friendship University of Russia, Miklukho-Maklaya str. 6, Moscow, 117198, Russia}

\address{\supind{4}St.~Petersburg State University,
Universitetskaya nab. 7/9, St.~Petersburg, 199034, Russia}

\email{apiatnitski@gmail.com}

\email{v.slouzh@spbu.ru}

\email{t.suslina@spbu.ru}

\email{elena.jijina@gmail.com}

\keywords{Nonlocal convolution type operators, periodic homogenization, operator estimates, effective operator, corrector}

\dedicatory{Dedicated to Nina Nikolaevna Ural'tseva on the occasion of her jubilee}

\thanks{2020 Mathematics Subject Classification. Primary 35B27; Secondary 45E10.}

\begin{abstract}
In $L_2(\mathbb{R}^d)$, we consider a selfadjoint bounded operator
${\mathbb A}_\varepsilon$, $\varepsilon >0$, of the form
$$
({\mathbb A}_\varepsilon u) (\mathbf{x}) = \varepsilon^{-d-2} \int_{\mathbb{R}^d}
a((\mathbf{x} - \mathbf{y} )/ \varepsilon ) \mu(\mathbf{x} /\varepsilon, \mathbf{y} /\varepsilon) \left( u(\mathbf{x}) - u(\mathbf{y}) \right)\, d\mathbf{y}.
$$
It is assumed that  $a(\mathbf{x})$ is a nonnegative function of class  $L_1(\mathbb{R}^d)$ such that
\hbox{$a(-\mathbf{x}) = a(\mathbf{x})$} and $\mu(\mathbf{x},\mathbf{y})$ is $\mathbb{Z}^d$-periodic in each variable and such that
$\mu(\mathbf{x},\mathbf{y}) = \mu(\mathbf{y},\mathbf{x})$ and $0< \mu_- \leqslant \mu(\mathbf{x},\mathbf{y})
\leqslant \mu_+< \infty$. Moreover, it is assumed that
the moments $M_k (a)= \int_{\mathbb{R}^d} | \mathbf{x} |^k a(\mathbf{x})\,d\mathbf{x}$, $k=1,2,3,4,$ are finite.
We obtain approximation of the resolvent  $({\mathbb A}_\varepsilon + I)^{-1}$ for small $\varepsilon$
in the operator norm on  $L_2(\mathbb{R}^d)$ with error of order  $O(\varepsilon^2)$.
\end{abstract}

\maketitle

\section*{Introduction}

The paper concerns homogenization theory of periodic  operators. This is a broad field of theoretical and applied science; we  mention here only a few basic monographs \cite{BaPa, BeLP, JKO}.

We consider  the homogenization problem for a periodic nonlocal convolution type operator and estimate the rate of convergence in the operator norm.
The present paper is a continuation of the research started in  \cite{PSlSuZh};
regarding motivation, see the introduction to this article. The method is a modification of the
operator-theoretic approach.

\subsection{Operator estimates in homogenization theory. Operator-theoretic approach}\label{Sec0.1}
In a series of papers  \cite{BSu1, BSu3, BSu4} by Birman and Suslina,
an operator-theoretic  approach (a version of the spectral method) to homogenization problems for periodic differential operators in  ${\mathbb R}^d$ was suggested and developed.
By this approach,  the so-called
 \emph{operator error estimates} for a wide class of homogenization problems were found.
 Let us explain the nature of the results using the example of homogenization of an elliptic operator
$A_\varepsilon = - \operatorname{div} g(\mathbf{x} / \varepsilon) \nabla$, $\varepsilon >0$, in $L_2(\mathbb{R}^d)$. Here the matrix of coefficients $g(\mathbf{x})$ is assumed to be bounded, positive definite, and
$\mathbb{Z}^d$-periodic. In   \cite{BSu1}, it was shown that, as  $\varepsilon \to 0$, the resolvent $(A_\varepsilon +I)^{-1}$ converges in the operator norm on  $L_2(\mathbb{R}^d)$ to the resolvent of the operator  $A^0$ and the following estimate holds:
\begin{equation}
\label{BSu1}
\| (A_\varepsilon +I)^{-1} - (A^0 +I)^{-1}\|_{L_2(\mathbb{R}^d) \to L_2(\mathbb{R}^d)} \leqslant C \varepsilon.
\end{equation}
Here $A^0 = - \operatorname{div} g_{\operatorname{hom}} \nabla$ is the so-called  \emph{effective operator} with constant positive definite \emph{effective matrix} $g_{\operatorname{hom}}$.
Inequalities of this type are called operator error estimates in homogenization theory.
In \cite{BSu3}, a  more accurate approximation for the resolvent
\hbox{$(A_\varepsilon +I)^{-1}$} in the operator norm on  $L_2(\mathbb{R}^d)$ with corrector taken  into account  was obtained,
the error being of order $O(\varepsilon^2)$.
 In \cite{BSu4}, approximation of the resolvent
{$(A_\varepsilon +I)^{-1}$} in the norm of operators acting from $L_2(\mathbb{R}^d)$ to the Sobolev space  $H^1(\mathbb{R}^d)$
(also with corrector) with error of order $O(\varepsilon)$ was found.

The operator-theoretic  approach  is based on the scaling transformation, the Floquet--Bloch theory, and the analytic perturbation theory. Let us explain the method using as an example  the proof of estimate \eqref{BSu1}. By the scaling transformation,  estimate \eqref{BSu1} is reduced to the inequality
 \begin{equation}
\label{BSu2}
\| (A + \varepsilon^2 I)^{-1} - (A^0 + \varepsilon^2 I)^{-1}\|_{L_2(\mathbb{R}^d) \to L_2(\mathbb{R}^d)}
\leqslant C \varepsilon^{-1},
\end{equation}
 where $A = - \operatorname{div} g(\mathbf{x}) \nabla = \mathbf{D}^* g(\mathbf{x}) \mathbf{D}$, $\mathbf{D} = -i \nabla$. Then, by the unitary Gelfand transformation, the operator  $A$ is decomposed in the direct integral of operators  $A(\boldsymbol{\xi})$ acting in  $L_2(\Omega)$ and depending on the parameter
 $\boldsymbol{\xi} \in \widetilde{\Omega}$ (the quasimomentum). Here $\Omega = [0,1)^d$ is the cell of the lattice  $\mathbb{Z}^d$, and $\widetilde{\Omega} = [-\pi,\pi)^d$ is the cell of the dual lattice. The operator
$A(\boldsymbol{\xi})$  is given by  $A = (\mathbf{D} + \boldsymbol{\xi})^* g(\mathbf{x}) (\mathbf{D}+ \boldsymbol{\xi})$ and acts on  periodic functions.
Estimate  \eqref{BSu2} is equivalent to the similar estimate for the operators depending on  the quasimomentum :
\begin{equation*}
\| (A(\boldsymbol{\xi}) + \varepsilon^2 I)^{-1} - (A^0(\boldsymbol{\xi}) + \varepsilon^2 I)^{-1}\|_{L_2(\Omega) \to L_2(\Omega)} \leqslant C \varepsilon^{-1},
\quad \boldsymbol{\xi} \in \widetilde{\Omega}.
\end{equation*}
The main part of the proof consists in studying the operator family  $A(\boldsymbol{\xi})$,
which is an analytic operator family with compact resolvent. Thus, the methods of the analytic perturbation theory can be applied.  It turns out that the resolvent  \hbox{$(A(\boldsymbol{\xi}) + \varepsilon^2 I )^{-1}$} can be approximated in terms of the spectral characteristics of the operator at the edge of the spectrum. In particular, the effective matrix is expressed in terms of the Hesse matrix for the first eigenvalue  $\lambda_1(\boldsymbol{\xi})$ of the operator $A(\boldsymbol{\xi})$ at the point $\boldsymbol{\xi}=0$.
Therefore, the homogenization effect is a  \emph{spectral threshold effect} at the bottom of the
spectrum of an elliptic operator $A$.

A different approach to obtaining operator error estimates in homogenization problems (the so-called  ``shift method'') was suggested in the works by Zhikov and Pastukhova  (see \cite{Zh, ZhPas1}, as well as the survey  \cite{ZhPas3} and references therein).

In recent years, operator error estimates in various homogenization problems for differential operators attracted the attention of an increasing number of  researchers; many meaningful results have been obtained. A fairly detailed survey of the  current state of this area  can be found in  \cite[Sec. 0.2]{PSlSuZh} and  \cite[Introduction]{Su_UMN2023}.

We emphasize, however, that before the appearance of the authors' work \cite{PSlSuZh}
\emph{operator estimates for nonlocal homogenization problems have not been studied}.

\subsection{Statement of the problem. Main result}
In $L_2(\mathbb{R}^d)$,  we study a  nonlocal operator $\mathbb{A}_\varepsilon$ defined by
 \begin{equation}
 \label{Sus1}
 \mathbb{A}_\varepsilon u( \mathbf{x})=\frac1{\varepsilon^{d+2}}\int\limits_{\mathbb R^d} a\Big(\frac{\mathbf{x}-\mathbf{y}}{\varepsilon}\Big)\mu\Big(\frac{\mathbf{x}}{\varepsilon},\frac{\mathbf{y}}{\varepsilon}\Big)\big( u(\mathbf{x})-u(\mathbf{y})\big)\,d\mathbf{y},\ \ \mathbf{x}\in\mathbb{R}^{d},\ \ u\in L_{2}(\mathbb{R}^{d}).
 \end{equation}
 Here $\varepsilon$ is a small positive parameter. It is assumed that  $a(\mathbf{x})$ is an even nonnegative function of class $L_{1}(\mathbb{R}^{d})$ such that $\|a\|_{L_{1}}>0$; and $\mu(\mathbf{x}, \mathbf{y})$ is bounded, positive definite,  $\mathbb{Z}^{d}$-periodic in each variable, and such that
$\mu( \mathbf{x},\mathbf{y})=\mu(\mathbf{y}, \mathbf{x})$. Under these assumptions, the operator
$\mathbb{A}_{\varepsilon}$ is bounded, self-adjoint and nonnegative. It is also assumed that  the first few moments
$M_{k}=\int_{\mathbb{R}^{d}}| \mathbf{x} |^{k}a(\mathbf{x})\,d\mathbf{x}$
are finite.

The operator of the form \eqref{Sus1} arises in models of mathematical biology and population dynamics
and has been actively studied recently.
Homogenization problem for such operators was studied in  \cite{PZh}, where under the condition
$M_2(a) < \infty$  it was shown that  the resolvent  $(\mathbb{A}_{\varepsilon}+I)^{-1}$
strongly converges to the resolvent  \hbox{$(\mathbb{A}^{0}+I)^{-1}$} of the effective operator, as
$\varepsilon \to 0$. The effective operator is a second-order elliptic differential operator
$\mathbb{A}^{0}=-\operatorname{div}g^{0}\nabla$ with constant coefficients.
Thus, an interesting effect is observed in this problem: for  a bounded nonlocal operator $\mathbb{A}_{\varepsilon}$
the effective operator is  unbounded local operator $\mathbb{A}^{0}$.

For operators with a nonsymmetric kernel, similar problems were studied in \cite{PiaZhi19}, where for the corresponding parabolic equations, the homogenization result is valid in moving coordinates.
The problem in a periodically perforated domain was investigated by variational methods in \cite{BraPia21}.

In the work of the authors  \cite{PSlSuZh}, under the condition  $M_3(a) < \infty$ it was proved that the resolvent
$(\mathbb{A}_{\varepsilon}+I)^{-1}$ converges to the resolvent $(\mathbb{A}^{0}+I)^{-1}$ of the effective operator in  the operator norm on  $L_{2}(\mathbb{R}^{d})$ and the following sharp order error estimate holds:
\begin{equation*}\label{Sloushch2}
\|(\mathbb{A}_{\varepsilon}+I)^{-1}-(\mathbb{A}^{0}+I)^{-1}\|_{L_{2}(\mathbb{R}^{d})\to
L_{2}(\mathbb{R}^{d})}\leqslant C(a,\mu)\varepsilon,\ \ \varepsilon>0.
\end{equation*}

Let us describe the effective operator. As usual in homogenization theory,
in order to describe the effective matrix $g^0$, we need to consider auxiliary problems on the cell. As in Section~\ref{Sec0.1}, let
$\Omega:=[0,1)^{d}$ be the cell of the lattice  $\mathbb{Z}^{d}$.
 Suppose that a vector-valued function  $\mathbf{v}(\mathbf{x})=(v_{1}(\mathbf{x}),\dots,v_{d}(\mathbf{x}))^{t}$,
$\mathbf{x} \in\mathbb{R}^{d}$, is a  $\mathbb{Z}^{d}$-periodic solution of the problem
\begin{equation}\label{Sloushch1}
\left\{
\begin{array}{l}
\int_{\mathbb{R}^{d}}a(\mathbf{x}-\mathbf{y})\mu(\mathbf{x},\mathbf{y})(\mathbf{v}(\mathbf{x})-\mathbf{v}(\mathbf{y}))\, d\mathbf{y}
=\int_{\mathbb{R}^{d}}a(\mathbf{x}- \mathbf{y})\mu(\mathbf{x}, \mathbf{y})(\mathbf{x}-\mathbf{y})\, d\mathbf{y},\\
\\
\int_{\Omega} \mathbf{v}(\mathbf{y})\,d\mathbf{y}=0.
\end{array}
\right.
\end{equation}
Note that problem  (\ref{Sloushch1}) has a unique
$\mathbb{Z}^{d}$-periodic solution.  We show that this solution is bounded (see Section \ref{Sec5}). Let $g^{0}$ be a  $(d\times d)$-matrix with the entries  $\frac{1}{2}g_{kl}$, $k,l =1,\dots,d$, given by
\begin{equation*}
g_{kl} \!=\! \intop_{\Omega}\!d\mathbf{x} \intop_{\mathbb{R}^{d}} \!d\mathbf{y} \Big((x_{k}-y_{k})(x_{l}-y_{l}) \!-\! v_{l}(\mathbf{x})(x_{k}-y_{k}) \!-\!v_{k}(\mathbf{x})(x_{l}-y_{l})\Big)a(\mathbf{x}-\mathbf{y})\mu(\mathbf{x},\mathbf{y}),
\end{equation*}
 $k, l =1,\dots,d$. It turns out that the matrix $g^{0}$ is positive definite.
The effective operator  $\mathbb{A}^{0}=-\operatorname{div}g^{0}\nabla$ is defined on the Sobolev space  $H^{2}(\mathbb{R}^{d})$.

\emph{Our goal} is to obtain more accurate approximation of the resolvent $(\mathbb{A}_{\varepsilon}+I)^{-1}$
for small $\varepsilon$. We assume that  \hbox{$M_4(a) < \infty$}. The \emph{main result} of the paper
(Theorem \ref{teor3.2}) is the following estimate:
\begin{equation*}\label{Sloushch3}
\|(\mathbb{A}_{\varepsilon}+I)^{-1}-(\mathbb{A}^{0}+I)^{-1} - \varepsilon K_\varepsilon \|_{L_{2}(\mathbb{R}^{d})\to
L_{2}(\mathbb{R}^{d})}\leqslant C(a,\mu)\varepsilon^2,\ \ \varepsilon>0.
\end{equation*}
The corrector $K_\varepsilon$ is given by
$$
K_\varepsilon = -  \sum_{j=1}^d [v_j^\varepsilon] \partial_j (\mathbb{A}^0+ I)^{-1} +
 \sum_{j=1}^d (\mathbb{A}^0+ I)^{-1} \partial_j  [v_j^\varepsilon],
$$
where $[v_j^\varepsilon]$ is the operator of multiplication by the function $v_j(\mathbf{x} / \varepsilon)$.

\subsection{Method} 
As in \cite{PSlSuZh}, in order to find  approximation for the resolvent of the operator \eqref{Sus1},
we modify the operator-theoretic approach, which was developed by Birman and Suslina and discussed above in
Section \ref{Sec0.1}.

The first two steps, namely, the scaling transformation and the decomposition of the operator $\mathbb{A}$
into the direct integral of the operators $\mathbb{A}(\boldsymbol{\xi})$ with the help of the unitary Gelfand transform, remain the same. Here $\mathbb{A} = \mathbb{A}_{\varepsilon_0},\ \varepsilon_0 =1$.
The problem is reduced to studying the asymptotics of the resolvent  \hbox{$(\mathbb{A}(\boldsymbol{\xi})+\varepsilon^{2}I)^{-1}$}
for small $\varepsilon>0$. However, to the family of operators $\mathbb{A}( \boldsymbol{\xi})$ acting in the space
$L_2(\Omega)$ and depending on the parameter $\boldsymbol{\xi}  \in \widetilde{\Omega}$, the methods of the analytic perturbation theory are no longer applicable. In contrast with the case of differential operators,
this operator family is not analytic. Instead, we have only the finite smoothness of the family
$\mathbb{A}(\boldsymbol{\xi})$, which is granted by the assumption of finiteness of the first few moments of the coefficient  $a(\mathbf{x})$.

According to the operator-theoretic approach, to obtain an approximation for the resolvent
$(\mathbb{A} (\boldsymbol{\xi}) + \varepsilon^{2}I)^{-1}$ for small $\varepsilon$, it suffices to find the asymptotics of the operator-valued function
$\mathbb{A}( \boldsymbol{\xi})F( \boldsymbol{\xi})$, $\boldsymbol{\xi} \to0$; here $F(\boldsymbol{\xi})$ is the spectral projection of the operator $\mathbb{A}(\boldsymbol{\xi})$ corresponding to some neighborhood of zero. Traditionally, the asymptotics of the operator $\mathbb{A}(\boldsymbol{\xi})F(\boldsymbol{\xi})$ for $\boldsymbol{\xi} \to0$ was calculated using the asymptotics of the first eigenvalue  $\lambda_1(\boldsymbol{\xi})$ of
$\mathbb{A}(\boldsymbol{\xi})$. We consistently apply
an alternative way of calculating the asymptotics of the operator
 $\mathbb{A}(\boldsymbol{\xi})F(\boldsymbol{\xi})$ for $\boldsymbol{\xi}\to0$, which relies on
 integrating the resolvent  $(\mathbb{A}(\boldsymbol{\xi}) - \zeta I)^{-1}$ along a suitable contour on the complex plane. In \cite[Sec. 4.2]{PSlSuZh}, this approach was outlined as the  ``third method''.

As in the case of differential operators, it turns out that the main role in this problem is played by the spectral characteristics of the operator $\mathbb{A}$ near the lower edge of the spectrum (the so-called threshold characteristics). Thus, the homogenization effect for a nonlocal operator
${\mathbb A}_\varepsilon$ is also a threshold effect at the edge of the spectrum.

\subsection{Plan of the paper} 
The paper consists of Introduction and five sections. In Section \ref{Sec1},
we introduce the operator ${\mathbb A}$, describe  the decomposition of this operator into the direct integral
of the operators ${\mathbb A}(\boldsymbol{\xi})$,  and discuss estimates for the quadratic form of
the operator ${\mathbb A}(\boldsymbol{\xi})$. In Section \ref{Sec2}, threshold characteristics of the operator family
${\mathbb A}(\boldsymbol{\xi})$ near the bottom of the spectrum are studied. In Section \ref{Sec3},
we find approximation of the resolvent $( {\mathbb A}(\boldsymbol{\xi}) + \varepsilon^2 I)^{-1}$ for small $\varepsilon$, and, using the direct integral decomposition for $\mathbb A$, deduce approximation of the resolvent
$( {\mathbb A} + \varepsilon^2 I)^{-1}$. In Section \ref{Sec4}, the main result of the paper, which is
approximation of the resolvent $({\mathbb A}_\varepsilon + I)^{-1}$ in the operator norm on $L_2(\mathbb{R}^d)$,
is derived from the results of Section \ref{Sec3} by means of the scaling transformation. Appendix (Section \ref{Sec5})
contains the proof of boundedness of the solution of problem \eqref{Sloushch1}.

\subsection{Notation} 
The norm in the  normed  space $X$ is denoted by  $\|\cdot\|_{X}$
(or without index, if this does not lead to confusion); if
 $X$ and $Y$ are normed spaces, then the standard norm of a linear bounded operator $T:X\to Y$ is denoted by  $\|T\|_{X\to Y}$ or $\|T\|$ (without index).  The linear span of a vector system
$F\subset X$  is denoted by  $\mathcal{L}\{F\}$.
The space of bounded linear operators in the normed space  $X$ is denoted by  $\mathcal{B}(X)$.

Let $\mathfrak{H}$ and $\mathfrak{H}_*$ be complex separable Hilbert spaces.
 If $A: \mathfrak{H} \to \mathfrak{H}_*$ is a linear operator, its domain and kernel are denoted by
 $\operatorname{Dom} A$ and $\operatorname{Ker} A$. If ${\mathfrak N}$ is a subspace of $\mathfrak{H}$,
 then ${\mathfrak N}^\perp$ denotes  its orthogonal complement.
 If  $P$ is the orthogonal projection of  ${\mathfrak H}$ onto ${\mathfrak N}$, then $P^\perp$
 is the orthogonal projection onto  ${\mathfrak N}^\perp$.
For a selfadjoint operator  ${A}$ in a Hilbert space
$\mathfrak{H}$, the symbols $\sigma(A)$ and $\sigma_{\text{e}}(A)$ stand for the spectrum and
the essential spectrum of   $A$, respectively; if $\delta$ is a Borel set on  $\mathbb{R}$,
then $E_{A}(\delta)$ denotes the spectral projection of  $A$ corresponding to the set  $\delta$.

 If $\mathcal O$ is a domain  in $\mathbb{R}^d$,
 then $L_{p}({\mathcal O})$,
\hbox{$1 \leqslant p \leqslant \infty$}, stand for the standard  $L_p$-classes.
The standard inner product in  $L_{2}({\mathcal O})$ is denoted by  $(\cdot,\cdot)_{L_{2}({\mathcal O})}$
or by  $(\cdot,\cdot)$ (without index).
If $f\in L_\infty({\mathcal O})$, then the symbol  $[f]$ means the operator of multiplication  by the function
 $f(\mathbf{x})$ in the space $L_{2}({\mathcal O})$. The standard Sobolev spaces of order $s>0$ in the domain
$\mathcal O$ are denoted by  $H^s({\mathcal O})$.

The standard inner product in   $\mathbb{R}^d$ or
$\mathbb{C}^{d}$ is denoted by  $\langle\cdot,\cdot\rangle$.
Next, we use the notation $\mathbf{x} = (x_1,\dots, x_d)^{t} \in \mathbb{R}^d$, $i D_j = \partial_j = \partial / \partial x_j$, $j=1,\dots,d$; $\mathbf{D} = - i \nabla = (D_1,\dots,D_d)^t$.
By $\mathcal{S}(\mathbb{R}^{d})$ we denote the Schwarz class in $\mathbb{R}^{d}$.
The characteristic function of the set $E\subset \mathbb{R}^d$ is denoted by  $\mathbf{1}_{E}$.

\section{Nonlocal convolution type operator: \\direct integral decomposition and estimates 
\label{Sec1}}

\subsection{Operator $\mathbb{A}(a,\mu)$\label{Sec1.1}} 
Let $a\in L_{1}(\mathbb{R}^d)$ and $\mu \in L_{\infty}(\mathbb{R}^d \times \mathbb{R}^d)$.
In $L_{2}(\mathbb{R}^d)$, we define  a {\it nonlocal convolution type operator\/} $\mathbb{A} =\mathbb{A}(a,\mu)$  by
\begin{equation*}
\mathbb{A}(a,\mu) u(\mathbf{x}):=\intop_{\mathbb{R}^d} a(\mathbf{x}-\mathbf{y})\mu(\mathbf{x},\mathbf{y})(u(\mathbf{x})-u(\mathbf{y})) \,d\mathbf{y},\ \ \mathbf{x}\in\mathbb{R}^d.
\end{equation*}
The operator $\mathbb{A}$ can be represented as  $\mathbb{A} =p(\mathbf{x};a,\mu)-\mathbb{B}(a,\mu)$, where
\begin{equation*}
\begin{gathered}
p(\mathbf{x};a,\mu):= \intop_{\mathbb{R}^d} a(\mathbf{x} - \mathbf{y}) \mu(\mathbf{x},\mathbf{y}) \,d\mathbf{y},\ \ \mathbf{x} \in \mathbb{R}^d,\\
\mathbb{B}(a,\mu) u(\mathbf{x}):=\intop_{\mathbb{R}^d} a(\mathbf{x} -\mathbf{y})\mu(\mathbf{x},\mathbf{y})u(\mathbf{y})\,d\mathbf{y},\ \ \mathbf{x} \in \mathbb{R}^d.
\end{gathered}
\end{equation*}
By the Schur lemma  (see, e.\,g.,  \cite[Lemma 4.1]{PSlSuZh}), the operator  $\mathbb{B}(a,\mu)$ is bounded and $\| \mathbb{B}(a,\mu)\|_{L_2 \to L_2}\leqslant \|\mu\|_{L_\infty}\|a\|_{L_1}$. Moreover, the potential $p(\mathbf{x}) = p(\mathbf{x};a,\mu)$ satisfies
$$
\|p\|_{L_\infty}\leqslant \|\mu\|_{L_\infty}\|a\|_{L_1}.
$$
 Hence, the operator $\mathbb{A}(a,\mu):L_{2}(\mathbb{R}^d)\to L_{2}(\mathbb{R}^d)$ is bounded. We denote   $\mathbb{A}_{0}(a):=\mathbb{A}(a,\mu_{0})$,
where $\mu_{0}\equiv 1$;  $p_{0}(\mathbf{x};a):=p(\mathbf{x};a,\mu_{0})$; $\mathbb{B}_{0}(a):= \mathbb{B}(a,\mu_{0})$. Obviously,  $\mathbb{B}_{0}(a)$ is a convolution operator with the kernel  $a$, and the potential $p_{0}(\mathbf{x};a) = \int_{\mathbb{R}^d} a(\mathbf{y})\,d\mathbf{y}$ is constant.

From now on  we assume that the functions $a$ and  $\mu$ possess the following properties:
\begin{gather}
\label{e1.1}
a\in L_{1}(\mathbb{R}^d),\ \ \operatorname{mes}\{\mathbf{x}\in \mathbb{R}^d: a(\mathbf{x}) \ne 0 \} > 0,\ \ a(\mathbf{x})\geqslant 0,\ \ a(- \mathbf{x})=a(\mathbf{x}),\ \ \mathbf{x}\in \mathbb{R}^d;
\\
\label{e1.2}
0<\mu_{-}\leqslant \mu(\mathbf{x},\mathbf{y})\leqslant \mu_{+}<+\infty,\ \ \mu(\mathbf{x},\mathbf{y})= \mu(\mathbf{y},\mathbf{x}),\ \ \mathbf{x}, \mathbf{y}\in \mathbb{R}^d;
\\
\label{e1.3}
\mu(\mathbf{x}+\mathbf{m},\mathbf{y}+\mathbf{n})=\mu(\mathbf{x}, \mathbf{y}),\ \ \mathbf{x}, \mathbf{y}\in
\mathbb{R}^d,\ \ \mathbf{m}, \mathbf{n} \in \mathbb{Z}^d.
\end{gather}
We introduce the notation for the moments of the function  $a(\mathbf{x})$:
\begin{equation*}
\label{e1.4}
M_{k}(a):=\intop_{\mathbb{R}^d}| \mathbf{x}|^{k}a(\mathbf{x})\, d\mathbf{x},\ \ k \in \mathbb{N}.
\end{equation*}
Due to the condition  $0 < \int_{\mathbb{R}^d} a(\mathbf{x})\, d\mathbf{x} < \infty$,
the finiteness of the moment $M_k(a)$ automatically implies the finiteness of the moments
 $M_1(a),\dots, M_{k-1}(a)$.
In what follows, in various statements we will assume the finiteness of  $M_k(a)$
for various values of $k \leqslant 4$. In particular, in the main result (Theorem \ref{teor3.2})
it is assumed that  $M_4(a) < \infty$.

Obviously, under assumptions (\ref{e1.1}) and (\ref{e1.2}) the potential $p(\mathbf{x})$ is real-valued and the operator $\mathbb{B}(a,\mu)$ is selfadjoint. Consequently, the operator $\mathbb{A}(a,\mu)$ is also selfadjoint. Clearly, the potential $p(\mathbf{x};a,\mu)$ satisfies the estimates
\begin{equation}
\label{e1.5}
\mu_{-}\|a\|_{L_1(\mathbb{R}^d)} \leqslant p(\mathbf{x})\leqslant\mu_{+}\|a\|_{L_1(\mathbb{R}^d)},\ \ \mathbf{x} \in \mathbb{R}^d.
\end{equation}

\subsection{Estimates for the quadratic form of the operator $\mathbb{A}(a,\mu)$}
Under assumptions \eqref{e1.1} and \eqref{e1.2} the quadratic form of the operator $\mathbb{A}(a,\mu)$
admits the following representation
 (see, e.\,g., \cite{KPMZh} or \cite[Sec.~1.2]{PSlSuZh})
\begin{equation}\label{e1.6}
(\mathbb{A}(a,\mu) u,u)=\frac{1}{2} \intop_{\mathbb{R}^d} \intop_{\mathbb{R}^d} d\mathbf{x}\,d\mathbf{y}\, a(\mathbf{x}- \mathbf{y})\mu(\mathbf{x},\mathbf{y})
|u(\mathbf{x})-u(\mathbf{y})|^{2},\ \ u\in L_{2}(\mathbb{R}^d).
\end{equation}

By \eqref{e1.6}, the operator $\mathbb{A}(a,\mu)$ is nonnegative and
\begin{equation}
\label{e1.7}
\mu_{-}(\mathbb{A}_{0}(a) u,u)\leqslant (\mathbb{A}(a,\mu) u,u)\leqslant \mu_{+}(\mathbb{A}_{0}(a) u,u),\ \ u\in
L_{2}(\mathbb{R}^d).
\end{equation}
Since $\mathbb{B}_{0}(a)$ is a convolution operator, the Fourier transformation translates $\mathbb{B}_{0}(a)$
into the operator of multiplication by the function $\widehat a(\boldsymbol{\xi})$, $\boldsymbol{\xi}\in \mathbb{R}^d$, where
\begin{equation*}
\widehat a(\boldsymbol{\xi}) := \intop_{\mathbb{R}^d}e^{-i \langle \boldsymbol{\xi}, \mathbf{z} \rangle}a(\mathbf{z})\, d\mathbf{z},\ \   \boldsymbol{\xi} \in \mathbb{R}^d.
\end{equation*}
It follows that the operator  $\mathbb{A}_{0}(a)$ is unitarily equivalent  to the operator of multiplication by the function $\widehat a(\mathbf{0})-\widehat a( \boldsymbol{\xi})$. Hence, $\lambda_{0}=0$ belongs to the spectrum of  $\mathbb{A}_{0}(a)$.  Since $\mathbb{A}_{0}(a)$ is a nonnegative operator,  $\lambda_{0}$ is the spectral edge.
In view of estimates \eqref{e1.7}, the point $\lambda_{0}=0$ is also the lower edge  of the spectrum of  $\mathbb{A}(a,\mu)$.

\subsection{Direct integral decomposition for the operator $\mathbb{A}(a,\mu)$} 
Due to conditions \eqref{e1.1}--\eqref{e1.3}, the operator of multiplication by the potential $p(\mathbf{x})$ and the operator  $\mathbb{B}(a,\mu)$ (and then also $\mathbb{A}(a,\mu)$) commute with the shift operators $S_{\mathbf{n}}$
defined by
\begin{equation*}
S_{\mathbf{n}} u(\mathbf{x}) =u(\mathbf{x}+\mathbf{n}),\ \ \mathbf{x}\in \mathbb{R}^d,\ \ \mathbf{n} \in \mathbb{Z}^d.
\end{equation*}
This means that   $\mathbb{A}(a,\mu)$ and $\mathbb{B}(a,\mu)$ are periodic operators with a periodicity lattice $\mathbb{Z}^d$. Denote by  $\Omega:=[0,1)^{d}$ the cell of $\mathbb{Z}^d$ and by $\widetilde\Omega:=[-\pi,\pi)^{d}$ the cell of the dual lattice  $(2\pi\mathbb{Z})^{d}$.

Recall the definition of the Gelfand transform $\mathcal{G}$ (see, e.\,g., \cite{Sk} or \cite[Chapter~2]{BSu1}). First,  $\mathcal{G}$ is defined on the Schwarz class  ${\mathcal S}(\mathbb{R}^d)$  as follows:
\begin{equation*}
\mathcal{G}u(\boldsymbol{\xi},\mathbf{x}):=(2\pi)^{-d/2}\sum_{\mathbf{n}\in\mathbb{Z}^d} u(\mathbf{x}+\mathbf{n}) e^{-i \langle \boldsymbol{\xi}, \mathbf{x} +\mathbf{n}\rangle},\
\  \boldsymbol{\xi} \in\widetilde\Omega,\ \ \mathbf{x}\in\Omega,\ \ u\in {\mathcal S}(\mathbb{R}^d).
\end{equation*}
Next,  $\mathcal{G}$ is extended by continuity up to the unitary mapping
 $$
 \mathcal{G}: L_{2}(\mathbb{R}^d) \to \int_{\widetilde\Omega}\oplus L_{2}(\Omega)\, d\boldsymbol{\xi} =L_{2}(\widetilde\Omega\times\Omega).
 $$

Like all periodic operators,  $\mathbb{A}(a,\mu)$ and $\mathbb{B}(a,\mu)$ are decomposed into  the direct integrals with the help of the Gelfand transform (i.\,e., they are partially diagonalized):
\begin{equation}
\label{e1.8}
\mathbb{A}(a,\mu) = {\mathcal G}^* \Bigl( \int_{\widetilde\Omega} \oplus  \mathbb{A}(\boldsymbol{\xi};a,\mu) \,d\boldsymbol{\xi}\Bigr) {\mathcal G},\quad
\mathbb{B}(a,\mu) = {\mathcal G}^* \Bigl( \int_{\widetilde\Omega} \oplus  \mathbb{B}(\boldsymbol{\xi};a,\mu) \,
d\boldsymbol{\xi}\Bigr) {\mathcal G}.
\end{equation}
Here  $\mathbb{A}(\boldsymbol{\xi}) = \mathbb{A}(\boldsymbol{\xi}; a,\mu)$ and $\mathbb{B}(\boldsymbol{\xi}) = \mathbb{B}(\boldsymbol{\xi}; a,\mu)$ are selfadjoint bounded operators in
$L_{2}(\Omega)$ defined by
\begin{align}
\label{A_xi}
\mathbb{A}(\boldsymbol{\xi}; a,\mu) u (\mathbf{x}) &= p(\mathbf{x};a,\mu) u(\mathbf{x}) - \mathbb{B}(\boldsymbol{\xi}; a,\mu)u(\mathbf{x}),\ \ u\in L_{2}(\Omega),
\\
\label{B_xi}
\mathbb{B}( \boldsymbol{\xi}; a,\mu)u(\mathbf{x}) &= \intop_{\Omega}\widetilde a( \boldsymbol{\xi},\mathbf{x} - \mathbf{y})\mu(\mathbf{x},\mathbf{y}) u(\mathbf{y} )\,d\mathbf{y},\ \ u\in L_{2}(\Omega),
\end{align}
where
\begin{equation}
\label{a_tilde}
\widetilde{a}(\boldsymbol{\xi},\mathbf{z}) :=\sum_{\mathbf{n} \in\mathbb{Z}^d} a(\mathbf{z}+\mathbf{n}) e^{-i \langle \boldsymbol{\xi}, \mathbf{z} +\mathbf{n} \rangle },\ \
\boldsymbol{\xi} \in\widetilde\Omega,\ \  \mathbf{z} \in\mathbb{R}^d.
\end{equation}
Note that
\begin{equation}\label{e1.9}
p(\mathbf{x};a,\mu)=\intop_{\Omega}\widetilde a(\mathbf{0},\mathbf{x} -\mathbf{y})\mu(\mathbf{x},\mathbf{y})\,d\mathbf{y}.
\end{equation}
Let us explain how \eqref{e1.8} is understood, using the example of the first equality. Let $u \in L_2(\mathbb{R}^d)$
and $v= \mathbb{A} u$.
Then $\mathcal{G} v (\boldsymbol{\xi}, \cdot) = \mathbb{A}(\boldsymbol{\xi}) \mathcal{G} u (\boldsymbol{\xi}, \cdot)$, $\boldsymbol{\xi} \in \widetilde{\Omega}$.

The operator
$\mathbb{B}(\boldsymbol{\xi}; a,\mu)$ is compact (see \cite[Sec 4.1]{PSlSuZh}); by the Schur lemma,
its norm satisfies
\begin{equation*}
\| \mathbb{B} (\boldsymbol{\xi}; a,\mu)\|  \leqslant \mu_{+}\|a\|_{L_1(\mathbb{R}^d)},\ \
\boldsymbol{\xi} \in\widetilde\Omega.
\end{equation*}
We conclude that the essential spectrum of $\mathbb{A}(\boldsymbol{\xi}; a,\mu)$, $\boldsymbol{\xi} \in\widetilde\Omega$, coincides with
$$
\sigma_{\textrm e}(\mathbb{A}(\boldsymbol{\xi};a,\mu))=\mathrm{ess}\text{-}\mathrm{Ran}\,p(\cdot;a,\mu).
$$
Due to the compactness of $\mathbb{B}(\boldsymbol{\xi}; a,\mu)$ and the lower bound  \eqref{e1.5}, for any
$\boldsymbol{\xi} \in \widetilde\Omega$ the spectrum of $\mathbb{A}(\boldsymbol{\xi}; a,\mu)$ in the interval
 $(-\infty, \mu_{-}\|a\|_{L_1})$ is discrete.

 In \cite[Lemma~1.1]{PSlSuZh}, a convenient representation for the quadratic form
of the operator $\mathbb{A}(\boldsymbol{\xi}; a,\mu)$ was obtained.

\begin{lemma}[\cite{PSlSuZh}]
\label{lem1.1}
Under the assumptions  \eqref{e1.1}--\eqref{e1.3} we have
\begin{equation}
\label{e1.11}
(\mathbb{A}(\boldsymbol{\xi}; a,\mu)u,u) = \frac{1}{2}\intop_{\Omega}d\mathbf{x}\intop_{\mathbb{R}^d}\,d\mathbf{y}\,
a(\mathbf{x}- \mathbf{y})\mu( \mathbf{x}, \mathbf{y}) \bigl|e^{i\langle \boldsymbol{\xi}, \mathbf{x} \rangle}u(\mathbf{x})-e^{i \langle \boldsymbol{\xi}, \mathbf{y}\rangle}u(\mathbf{y}) \bigr|^{2}
\end{equation}
for $u\in L_{2}(\Omega)$ and
$\boldsymbol{\xi}\in\widetilde\Omega$.
It is assumed here that the function  $u\in L_{2}(\Omega)$ is  extended periodically to  $\mathbb{R}^d$.
\end{lemma}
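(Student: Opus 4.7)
The identity is a direct computation starting from the explicit representation
$(\mathbb{A}(\boldsymbol{\xi}; a,\mu)u,u)=(pu,u)-(\mathbb{B}(\boldsymbol{\xi}; a,\mu)u,u)$ provided by \eqref{A_xi}--\eqref{B_xi}, and matching it with the expansion of the modulus squared on the right-hand side of \eqref{e1.11}. The plan is to expand
\[
\bigl|e^{i\langle \boldsymbol{\xi}, \mathbf{x}\rangle}u(\mathbf{x})-e^{i\langle \boldsymbol{\xi}, \mathbf{y}\rangle}u(\mathbf{y})\bigr|^{2}
= |u(\mathbf{x})|^{2}+|u(\mathbf{y})|^{2} - e^{i\langle \boldsymbol{\xi}, \mathbf{x}-\mathbf{y}\rangle}u(\mathbf{x})\overline{u(\mathbf{y})} - e^{-i\langle \boldsymbol{\xi}, \mathbf{x}-\mathbf{y}\rangle}\overline{u(\mathbf{x})}u(\mathbf{y}),
\]
split the integral over $\Omega\times\mathbb{R}^{d}$ into the diagonal part (the first two terms) and the off-diagonal part (the last two), and identify each with the corresponding piece of $(\mathbb{A}(\boldsymbol{\xi};a,\mu)u,u)$.

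First, I would fold the $\mathbf{y}$-integral over $\mathbb{R}^{d}$ onto $\Omega$ by writing $\mathbf{y}=\mathbf{y}'+\mathbf{n}$ with $\mathbf{y}'\in\Omega$ and $\mathbf{n}\in\mathbb{Z}^{d}$, using the periodicity of $\mu$ (by \eqref{e1.3}) and of $u$ (by the standing assumption). For the diagonal term, the absence of a phase factor produces exactly the kernel $\sum_{\mathbf{n}}a(\mathbf{x}-\mathbf{y}'-\mathbf{n}) = \widetilde{a}(\mathbf{0},\mathbf{x}-\mathbf{y}')$, while for the off-diagonal term the phase $e^{\pm i\langle\boldsymbol{\xi},\mathbf{x}-\mathbf{y}'-\mathbf{n}\rangle}$ combines with the sum in $\mathbf{n}$ to give $\widetilde{a}(\boldsymbol{\xi},\mathbf{x}-\mathbf{y}')$ (respectively its complex conjugate), by definition \eqref{a_tilde} after a reindexing $\mathbf{n}\mapsto-\mathbf{n}$.

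Next, the diagonal part reduces to a double integral over $\Omega\times\Omega$ against $\widetilde{a}(\mathbf{0},\mathbf{x}-\mathbf{y}')\mu(\mathbf{x},\mathbf{y}')$ of $\tfrac12(|u(\mathbf{x})|^{2}+|u(\mathbf{y}')|^{2})$. Renaming $\mathbf{x}\leftrightarrow\mathbf{y}'$ in the $|u(\mathbf{y}')|^{2}$-summand, and invoking the evenness of $a$ (which gives $\widetilde{a}(\mathbf{0},-\mathbf{z})=\widetilde{a}(\mathbf{0},\mathbf{z})$) together with $\mu(\mathbf{x},\mathbf{y}')=\mu(\mathbf{y}',\mathbf{x})$, one sees that both halves are equal. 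Their sum therefore yields $\int_{\Omega}|u(\mathbf{x})|^{2}\bigl(\int_{\Omega}\widetilde{a}(\mathbf{0},\mathbf{x}-\mathbf{y}')\mu(\mathbf{x},\mathbf{y}')\,d\mathbf{y}'\bigr)\,d\mathbf{x}$, which equals $(pu,u)$ by \eqref{e1.9}.

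Finally, the off-diagonal part assembles into
$-\tfrac12\int_{\Omega}\int_{\Omega}\mu(\mathbf{x},\mathbf{y})\bigl[\overline{\widetilde{a}(\boldsymbol{\xi},\mathbf{x}-\mathbf{y})}u(\mathbf{x})\overline{u(\mathbf{y})}+\widetilde{a}(\boldsymbol{\xi},\mathbf{x}-\mathbf{y})\overline{u(\mathbf{x})}u(\mathbf{y})\bigr]\,d\mathbf{x}\,d\mathbf{y}$,
which is the symmetrized form of $-(\mathbb{B}(\boldsymbol{\xi};a,\mu)u,u)$; symmetrization is justified by swapping $\mathbf{x}\leftrightarrow\mathbf{y}$ and using $\mu(\mathbf{x},\mathbf{y})=\mu(\mathbf{y},\mathbf{x})$ along with the identity $\widetilde{a}(\boldsymbol{\xi},-\mathbf{z})=\overline{\widetilde{a}(\boldsymbol{\xi},\mathbf{z})}$ (which itself follows from evenness of $a$). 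Adding the two parts yields $(pu,u)-(\mathbb{B}(\boldsymbol{\xi};a,\mu)u,u)=(\mathbb{A}(\boldsymbol{\xi};a,\mu)u,u)$, proving \eqref{e1.11}. The only mildly delicate point is the Fubini/absolute convergence needed to fold the $\mathbf{y}$-integral; it is justified by $a\in L_{1}(\mathbb{R}^{d})$, $\mu\in L_{\infty}$, and $u\in L_{2}(\Omega)$ together with the evident $L_{\infty}$-bound $|\widetilde{a}(\boldsymbol{\xi},\cdot)|\le\widetilde{a}(\mathbf{0},\cdot)\in L_{1}(\Omega)$.
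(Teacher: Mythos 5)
Your proof is correct; the paper itself does not reprove Lemma~\ref{lem1.1} but cites it from \cite{PSlSuZh}, and your argument is the natural direct verification one would expect there: expand the modulus squared, unfold the $\mathbf{y}$-integral over $\mathbb{R}^d$ into a sum over $\mathbf{n}\in\mathbb{Z}^d$ plus an integral over $\Omega$ using periodicity of $\mu$ and $u$, recognize the resulting kernel as $\widetilde{a}(\mathbf{0},\cdot)$ in the ``diagonal'' piece (giving $(pu,u)$ via \eqref{e1.9}) and as $\widetilde{a}(\boldsymbol{\xi},\cdot)$ (or its conjugate) in the ``cross'' piece (giving $-(\mathbb{B}(\boldsymbol{\xi};a,\mu)u,u)$ after symmetrizing with $a(-\mathbf{z})=a(\mathbf{z})$ and $\mu(\mathbf{x},\mathbf{y})=\mu(\mathbf{y},\mathbf{x})$). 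The absolute-convergence remark at the end is exactly the right justification, so the argument is complete.
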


\subsection{Estimates for the quadratic form of the operator  $\mathbb{A}(\boldsymbol{\xi};a,\mu)$}\label{sec1.4}
As above, it is convenient to consider the case  $\mu=\mu_{0}\equiv 1$ separately.
We denote
$\mathbb{A}_{0}(\boldsymbol{\xi}; a):= \mathbb{A}(\boldsymbol{\xi}; a,\mu_{0})$, $\mathbb{B}_{0}(\boldsymbol{\xi}; a):=\mathbb{B}(\boldsymbol{\xi}; a,\mu_{0})$. Relations \eqref{e1.2} and \eqref{e1.11} imply the estimates
\begin{equation}
\label{e1.17}
\mu_{-}(\mathbb{A}_{0}(\boldsymbol{\xi}; a)u,u)\leqslant (\mathbb{A}(\boldsymbol{\xi}; a,\mu)u,u)\leqslant \mu_{+}(\mathbb{A}_{0}(\boldsymbol{\xi}; a)u,u),\
\ u\in L_{2}(\Omega),\ \ \boldsymbol{\xi} \in\widetilde\Omega.
\end{equation}
The operators $\mathbb{A}_{0}(\boldsymbol{\xi}; a)$, $\boldsymbol{\xi} \in\widetilde\Omega$, are diagonalized by means of the unitary discrete Fourier transform ${\mathcal F}: L_{2}(\Omega)\to \ell_{2}( \mathbb{Z}^d)$ defined as follows:
\begin{gather*}
{\mathcal F} u(\mathbf{n})=\intop_{\Omega}u(\mathbf{x})e^{-2\pi i \langle \mathbf{n}, \mathbf{x} \rangle}d\mathbf{x},\ \ \mathbf{n} \in \mathbb{Z}^d,\ \ u\in L_{2}(\Omega);
\\
{\mathcal F}^{*}v(\mathbf{x})=\sum_{\mathbf{n} \in \mathbb{Z}^d}v_{\mathbf n}e^{2\pi i \langle \mathbf{n}, \mathbf{x}\rangle},\ \ \mathbf{x} \in \Omega,\ \
v=\{v_{\mathbf{n}}\}_{\mathbf{n} \in \mathbb{Z}^d}\in\ell_{2}( \mathbb{Z}^d).
\end{gather*}
We have
\begin{equation}
\label{e1.18}
\mathbb{A}_{0}(\boldsymbol{\xi}; a)= {\mathcal F}^{*} \bigl[\widehat a({\mathbf 0})-\widehat a(2\pi \mathbf{n}+\boldsymbol{\xi}) \bigr] {\mathcal F},\ \ \widehat a(\mathbf{k}):=\intop_{\mathbb{R}^d}a(\mathbf{x})e^{-i \langle \mathbf{k}, \mathbf{x} \rangle}d\mathbf{x},\ \
\mathbf{k} \in\mathbb{R}^d.
\end{equation}
Here   $[\widehat{a}(\mathbf{0})-\widehat a(2\pi \mathbf{n}+ \boldsymbol{\xi})]$ stands for the operator of multiplication by the function \hbox{$\widehat{a}(\mathbf{0})-\widehat a(2\pi \mathbf{n}+ \boldsymbol{\xi})$} in the space
$\ell_2(\mathbb{Z}^d)$.
Thus, the symbol of the operator  $\mathbb{A}_{0}(\boldsymbol{\xi}; a)$ is a sequence
$\{\widehat A_{\mathbf{n}}(\boldsymbol{\xi})\}_{\mathbf{n} \in\mathbb{Z}^d}$, where
\begin{equation*}
\begin{aligned}
\widehat A_{\mathbf{n}}(\boldsymbol{\xi})&=\widehat A(\boldsymbol{\xi} +2\pi \mathbf{n})=\widehat a(\mathbf{0})-\widehat a(2\pi \mathbf{n} + \boldsymbol{\xi})
\\
&=\intop_{\mathbb{R}^d} \bigl(1-\cos( \langle \mathbf{z}, \boldsymbol{\xi}+2\pi \mathbf{n} \rangle) \bigr)a(\mathbf{z})\,d\mathbf{z}, \ \ \mathbf{n} \in \mathbb{Z}^d,\ \ \boldsymbol{\xi}\in\widetilde\Omega.
\end{aligned}
\end{equation*}
Here we have used the fact that the integral  $\int_{\mathbb{R}^d}\sin ( \langle \mathbf{z}, \boldsymbol{\xi} +2\pi \mathbf{n} \rangle )a(\mathbf{z})\, d\mathbf{z}$ is equal to zero because   $a(\mathbf{z})= a(- \mathbf{z})$.

From the described diagonalization it easily follows that
$\operatorname{Ker} \mathbb{A}_{0}(\mathbf{0};a)=\mathcal{L}\{\mathbf{1}_{\Omega}\}$.
Therefore, by \eqref{e1.17},
$\operatorname{Ker} \mathbb{A}(\mathbf{0}; a,\mu)=\mathcal{L}\{\mathbf{1}_{\Omega}\}$.
We arrive at the following statement (see \cite[Lemma 1.2]{PSlSuZh}).

\begin{lemma}[\cite{PSlSuZh}]
\label{lemma1.2}
Under conditions \eqref{e1.1}--\eqref{e1.3} the number $\lambda_{0}=0$ is a simple eigenvalue
of the operator $\mathbb{A}(\mathbf{0};a,\mu)$. We have
$\operatorname{Ker}\mathbb{A}(\mathbf{0};a,\mu)=\mathcal{L}\{\mathbf{1}_{\Omega}\}$.
\end{lemma}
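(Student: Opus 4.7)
The plan is to work directly with the quadratic form representation \eqref{e1.11} specialized to $\boldsymbol{\xi} = \mathbf{0}$, namely
\begin{equation*}
(\mathbb{A}(\mathbf{0};a,\mu)u,u) = \frac{1}{2}\int_{\Omega}\!d\mathbf{x}\int_{\mathbb{R}^d}\!d\mathbf{y}\, a(\mathbf{x}-\mathbf{y})\mu(\mathbf{x},\mathbf{y})|u(\mathbf{x})-u(\mathbf{y})|^{2},
\end{equation*}
with $u$ extended $\mathbb{Z}^d$-periodically. Since $\mathbb{A}(\mathbf{0};a,\mu)$ is bounded and nonnegative, membership in the kernel is equivalent to vanishing of this form.

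First, I verify $\mathcal{L}\{\mathbf{1}_{\Omega}\} \subset \operatorname{Ker}\mathbb{A}(\mathbf{0};a,\mu)$: the periodic extension of $\mathbf{1}_{\Omega}$ is identically $1$ on $\mathbb{R}^d$, so $u(\mathbf{x})-u(\mathbf{y}) \equiv 0$ and the integrand vanishes. For the converse inclusion, suppose $u\in L_2(\Omega)$ satisfies $\mathbb{A}(\mathbf{0};a,\mu)u = 0$. Using $\mu(\mathbf{x},\mathbf{y}) \ge \mu_- > 0$ from \eqref{e1.2} and $a \ge 0$ from \eqref{e1.1}, the vanishing of the quadratic form yields
\begin{equation*}
a(\mathbf{x}-\mathbf{y})\,|u(\mathbf{x})-u(\mathbf{y})|^{2} = 0 \quad \text{for a.e. } (\mathbf{x},\mathbf{y}) \in \Omega\times \mathbb{R}^d.
\end{equation*}
By periodicity of $u$, this promotes to: $u(\mathbf{x}) = u(\mathbf{y})$ for a.e.\ $(\mathbf{x},\mathbf{y})\in\mathbb{R}^d\times\mathbb{R}^d$ with $\mathbf{x}-\mathbf{y}\in S$, where $S := \{\mathbf{z} \in \mathbb{R}^d : a(\mathbf{z})>0\}$. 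Equivalently, for a.e.\ $\mathbf{z} \in S$ one has $u(\cdot + \mathbf{z}) = u(\cdot)$ a.e.\ on $\mathbb{R}^d$.

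The main step, and the one requiring care, is upgrading this a.e.-translation-invariance along $S$ to invariance under a full neighborhood of the origin, from which constancy of $u$ follows. By \eqref{e1.1}, $\operatorname{mes}(S) > 0$ and $S = -S$. The Steinhaus lemma then supplies an open neighborhood $U$ of $\mathbf{0}$ with $U \subset S - S$. A standard Fubini-and-convolution argument (for instance: if $\mathbf{w} = \mathbf{z}_1 - \mathbf{z}_2$ with $\mathbf{z}_j \in S$, then $u(\mathbf{x} + \mathbf{w}) = u((\mathbf{x}+\mathbf{w}) - \mathbf{z}_1) \cdot$ chained through $\mathbf{x} + \mathbf{z}_2 - \mathbf{z}_1 + \mathbf{z}_1 = \mathbf{x}+\mathbf{w}$, carried out after restricting to the full-measure set of admissible translations) gives $u(\cdot + \mathbf{w}) = u(\cdot)$ a.e.\ for every $\mathbf{w} \in U$. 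A function in $L_2^{\mathrm{loc}}(\mathbb{R}^d)$ invariant under a nonempty open set of translations is a.e.\ constant; combined with $u \in L_2(\Omega)$, this yields $u \in \mathcal{L}\{\mathbf{1}_{\Omega}\}$.

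The principal obstacle is precisely this last bootstrap: the hypothesis only provides invariance along a measurable (possibly quite irregular) set $S$, whereas constancy requires invariance on an \emph{open} set. The Steinhaus lemma bridges the gap, but one must verify that the measure-zero exceptional sets appearing at each composition step can be absorbed, i.e.\ that the set of pairs $(\mathbf{z}_1,\mathbf{z}_2) \in S\times S$ for which the chained identity $u(\mathbf{x}) = u(\mathbf{x}+\mathbf{z}_2-\mathbf{z}_1)$ holds a.e.\ in $\mathbf{x}$ is still of full measure in $S\times S$. Once that is done, simplicity of the eigenvalue $\lambda_0 = 0$ follows at once from $\dim\operatorname{Ker}\mathbb{A}(\mathbf{0};a,\mu) = 1$.
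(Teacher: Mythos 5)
Your argument is correct, but it follows a genuinely different path from the paper's. The paper's proof is given in two lines just before the statement of the lemma: the two-sided form inequality \eqref{e1.17} identifies $\operatorname{Ker}\mathbb{A}(\mathbf{0};a,\mu)$ with $\operatorname{Ker}\mathbb{A}_0(\mathbf{0};a)$, so it suffices to treat the constant-coefficient case $\mu\equiv 1$; and the discrete Fourier diagonalization \eqref{e1.18} reduces this to the observation that the symbol $\widehat{A}(2\pi\mathbf{n}) = 2\int_{\mathbb{R}^d}\sin^2(\pi\langle\mathbf{z},\mathbf{n}\rangle)\,a(\mathbf{z})\,d\mathbf{z}$ vanishes only at $\mathbf{n}=\mathbf{0}$, because for $\mathbf{n}\ne\mathbf{0}$ the factor $\sin^2(\pi\langle\cdot,\mathbf{n}\rangle)$ vanishes only on a null set while $a$ is nonzero on a set of positive measure. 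You instead work directly with the quadratic form \eqref{e1.11}, deduce translation-invariance of $u$ along $S=\{\mathbf{z}: a(\mathbf{z})>0\}$, and invoke the Steinhaus lemma to bootstrap to invariance on an open neighborhood of the origin. Both proofs exploit the same hypotheses in the same way --- positivity of $\mu$, evenness of $a$, and $\operatorname{mes}(S)>0$ --- but the paper's Fourier route entirely avoids the exceptional-set bookkeeping (measurability and symmetry of the good subset $S'\subset S$, composability of a.e.\ identities) which you correctly flag as the delicate part of the Steinhaus route. If you want to keep your approach, a cleaner formulation would be to note that $G:=\{\mathbf{z}\in\mathbb{R}^d:\ u(\cdot+\mathbf{z})=u(\cdot)\ \text{a.e.}\}$ is automatically a closed subgroup of $\mathbb{R}^d$ (it is the zero set of the continuous function $\mathbf{z}\mapsto\|u(\cdot+\mathbf{z})-u\|_{L^1(\Omega)}$), and it contains $S$ up to a null set; hence $\operatorname{mes}(G)>0$, Steinhaus gives that $G$ has nonempty interior, and the subgroup structure then forces $G=\mathbb{R}^d$, yielding constancy without any chaining of a.e.\ identities.
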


In order to estimate the quadratic form of the operator $\mathbb{A}( \boldsymbol{\xi}; a,\mu)$,
we carry out the detailed analysis of the symbol of the operator $\mathbb{A}_{0}( \boldsymbol{\xi}; a)$,
$\boldsymbol{\xi}\in\widetilde\Omega$.
Under condition \eqref{e1.1} the function
\begin{equation*}
\label{e1.17a}
\widehat A(\mathbf{y}):=\intop_{\mathbb{R}^d} \bigl(1-\cos(\langle \mathbf{z}, \mathbf{y} \rangle) \bigr)a(\mathbf{z})\,d\mathbf{z}
= 2\intop_{\mathbb{R}^d}\sin^{2}\left(\frac{\langle \mathbf{z}, \mathbf{y} \rangle}{2}\right)
a(\mathbf{z})\,d\mathbf{z},\ \ \mathbf{y} \in \mathbb{R}^d,
\end{equation*}
depends on  $\mathbf{y} \in \mathbb{R}^d$ continuously and, by the Riemann--Lebesgue lemma, converges to
\hbox{$\|a\|_{L_1}>0$} as $| \mathbf{y} | \to\infty$. Furthermore, it is easy to check that $\widehat A(\mathbf{y}) >0$
for $\mathbf{y} \not=0$.  Consequently, we have
\begin{equation}
\label{e1.19}
\min_{| \mathbf{y} | \geqslant r} \widehat A(\mathbf{y})=:\mathcal{C}_{r}(a)>0,\ \ r>0.
\end{equation}
Since $|  \boldsymbol{\xi}  + 2\pi \mathbf{n} |\geqslant \pi$ if  $ \boldsymbol{\xi} \in \widetilde\Omega$ and
$\mathbf{n} \in \mathbb{Z}^d\setminus\{ \mathbf{0} \}$, then
\begin{equation}
\label{e1.22}
\widehat A( \boldsymbol{\xi} +2\pi \mathbf{n})\geqslant \mathcal{C}_{\pi}(a),\ \  \boldsymbol{\xi} \in \widetilde\Omega,\ \
\mathbf{n} \in \mathbb{Z}^d\setminus\{\mathbf{0}\}.
\end{equation}

Next, under the condition $M_2(a) < \infty$ the function
\begin{equation*}
\label{e1.18a}
\intop_{\mathbb{R}^d}a(\mathbf{z}) \langle \mathbf{z}, \mathbf{y} \rangle^{2}\, d\mathbf{z} =:M_{a}(\mathbf{y})
\end{equation*}
is continuous in  $\mathbf{y} \in \mathbb{R}^d$ and not equal to zero if $\mathbf{y} \ne \mathbf{0}$. It follows that
\begin{equation}
\label{e1.20}
\min_{|\boldsymbol{\theta}|=1}M_{a}(\boldsymbol{\theta}) =:\mathcal{M}(a)>0.
\end{equation}

The following statement was obtained in  \cite[Lemma 1.3]{PSlSuZh}.

\begin{lemma}[\cite{PSlSuZh}]
\label{lemma1.3}
Suppose that conditions  \eqref{e1.1}--\eqref{e1.3} are fulfilled and  $M_3(a)<\infty$.
Then
\begin{equation*}
\label{e1.21}
\widehat A(\boldsymbol{\xi} +2\pi \mathbf{n})\geqslant C(a)| \boldsymbol{\xi} |^{2},\ \ \boldsymbol{\xi} \in\widetilde\Omega,\ \ \mathbf{n} \in\mathbb{Z}^d,
\end{equation*}
where
\begin{equation}
\label{C(a)}
C(a):=\min \Bigl\{\frac{1}{4}\mathcal{M}(a),\mathcal{C}_{r(a)}(a)\pi^{-2}d^{-1},\mathcal{C}_{\pi}(a)\pi^{-2}d^{-1}
\Bigr\}>0,
\end{equation}
the quantity $\mathcal{C}_{r}(a)$, $r>0$, is defined by \eqref{e1.19}, the constant  $\mathcal{M}(a)$ is given by
\eqref{e1.20}, and
$$r(a):=\frac{3 \mathcal{M}(a)}{2 M_{3}(a)}.$$
\end{lemma}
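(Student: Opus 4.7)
\medskip

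\noindent\textbf{Proof plan.} The plan is to split the argument by the size of the argument $\mathbf{y} = \boldsymbol{\xi} + 2\pi \mathbf{n}$, using Taylor expansion of $1-\cos$ near the origin and the uniform bound \eqref{e1.22} away from it. The upper bound $|\boldsymbol{\xi}|^{2}\le \pi^{2}d$ on $\widetilde\Omega$ will convert the ``absolute'' lower bounds $\mathcal C_{r}(a)$ into bounds of the desired form $C(a)|\boldsymbol{\xi}|^{2}$.

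First, consider $\mathbf{n}=\mathbf{0}$ and $|\boldsymbol{\xi}|\le r(a)$. From the identity
$$
\widehat A(\boldsymbol{\xi}) = \intop_{\mathbb{R}^{d}} \bigl(1-\cos\langle\mathbf{z},\boldsymbol{\xi}\rangle\bigr) a(\mathbf{z})\, d\mathbf{z},
$$
and the elementary estimate $|1-\cos t - t^{2}/2|\le |t|^{3}/6$, together with $|\langle\mathbf{z},\boldsymbol{\xi}\rangle|\le |\mathbf{z}|\,|\boldsymbol{\xi}|$, I get
$$
\widehat A(\boldsymbol{\xi}) \ge \tfrac12 M_{a}(\boldsymbol{\xi}) - \tfrac{1}{6} M_{3}(a)\,|\boldsymbol{\xi}|^{3}.
$$
The quadratic form $M_{a}(\boldsymbol{\xi})$ is homogeneous of degree two, so \eqref{e1.20} yields $M_{a}(\boldsymbol{\xi}) \ge \mathcal M(a)|\boldsymbol{\xi}|^{2}$, and factoring out $|\boldsymbol{\xi}|^{2}$ reduces matters to showing $\mathcal M(a)/2 - M_{3}(a)|\boldsymbol{\xi}|/6 \ge \mathcal M(a)/4$. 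This is exactly the condition $|\boldsymbol{\xi}|\le r(a) = 3\mathcal M(a)/(2M_{3}(a))$, giving $\widehat A(\boldsymbol{\xi}) \ge \tfrac14 \mathcal M(a)|\boldsymbol{\xi}|^{2}$.

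Second, consider $\mathbf{n}=\mathbf{0}$ with $|\boldsymbol{\xi}|> r(a)$. Here $|\boldsymbol{\xi}+2\pi\mathbf{n}|=|\boldsymbol{\xi}|\ge r(a)$, so \eqref{e1.19} gives $\widehat A(\boldsymbol{\xi})\ge \mathcal C_{r(a)}(a)$. Using $|\boldsymbol{\xi}|^{2}\le \pi^{2} d$ on $\widetilde\Omega$, I convert this into
$$
\widehat A(\boldsymbol{\xi}) \ge \mathcal C_{r(a)}(a)\,\pi^{-2}d^{-1}\,|\boldsymbol{\xi}|^{2}.
$$
Third, when $\mathbf{n}\in\mathbb{Z}^{d}\setminus\{\mathbf 0\}$, the bound \eqref{e1.22} gives $\widehat A(\boldsymbol{\xi}+2\pi\mathbf{n})\ge \mathcal C_{\pi}(a)$, and again $|\boldsymbol{\xi}|^{2}\le \pi^{2}d$ yields
$$
\widehat A(\boldsymbol{\xi}+2\pi\mathbf{n}) \ge \mathcal C_{\pi}(a)\,\pi^{-2}d^{-1}\,|\boldsymbol{\xi}|^{2}.
$$
Combining the three cases and taking the minimum of the three constants gives exactly $C(a)$ as in \eqref{C(a)}.

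The one step that actually requires care is the first case: one must verify that the cubic remainder in the Taylor expansion of $1-\cos$ is controlled uniformly in $\mathbf{z}$ by $|\mathbf{z}|^{3}|\boldsymbol{\xi}|^{3}$ (so the integration against $a(\mathbf{z})$ produces $M_{3}(a)$), and that the quadratic lower bound $\mathcal M(a)|\boldsymbol{\xi}|^{2}$ is genuinely available via \eqref{e1.20}; the threshold $r(a)$ is then forced by the requirement that the cubic correction not exceed half of the quadratic main term. The other two cases are essentially immediate consequences of \eqref{e1.19}, \eqref{e1.22}, and the diameter bound on $\widetilde\Omega$. No obstacle is expected beyond bookkeeping of the three constants.
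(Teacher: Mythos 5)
Your proof is correct, and it is essentially the canonical argument: the paper itself does not reprove this lemma (it is cited from \cite{PSlSuZh}), but the very shape of the constant $C(a)$ in \eqref{C(a)} --- a minimum of $\tfrac14\mathcal{M}(a)$, $\mathcal{C}_{r(a)}(a)\pi^{-2}d^{-1}$, and $\mathcal{C}_{\pi}(a)\pi^{-2}d^{-1}$ with threshold $r(a)=3\mathcal{M}(a)/(2M_3(a))$ --- uniquely reflects the three-case split you perform (Taylor expansion of $1-\cos$ with Lagrange cubic remainder for $\mathbf{n}=\mathbf{0}$, $|\boldsymbol{\xi}|\le r(a)$; the bound $\mathcal{C}_{r(a)}(a)$ for $\mathbf{n}=\mathbf{0}$, $|\boldsymbol{\xi}|>r(a)$; and \eqref{e1.22} for $\mathbf{n}\ne\mathbf{0}$, each converted to a quadratic bound via $|\boldsymbol{\xi}|^2\le\pi^2 d$). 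All steps check out: the Lagrange bound $|1-\cos t - t^2/2|\le|t|^3/6$, the homogeneity giving $M_a(\boldsymbol{\xi})\ge\mathcal{M}(a)|\boldsymbol{\xi}|^2$, and the verification that $|\boldsymbol{\xi}|\le r(a)$ is precisely the condition making the cubic correction at most half the quadratic main term are all correct.
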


Under conditions \eqref{e1.1}--\eqref{e1.3}, relations \eqref{e1.17}, \eqref{e1.18}, and \eqref{e1.22}
imply that
\begin{equation}\label{e1.31}
(\mathbb{A}( \boldsymbol{\xi}; a,\mu)u,u)\geqslant \mu_{-}\mathcal{C}_{\pi}(a) \|u\|^{2}_{L_2(\Omega)},\ \
u\in L_{2}(\Omega)\ominus\mathcal{L}\{\mathbf{1}_{\Omega}\},\ \
 \boldsymbol{\xi} \in \widetilde\Omega.
\end{equation}

Relations  \eqref{e1.17}, \eqref{e1.18}, and Lemma \ref{lemma1.3} imply the following statement;  see \cite[Proposition 1.4]{PSlSuZh}.

\begin{proposition}[\cite{PSlSuZh}]
\label{prop1.4}
Suppose that conditions  \eqref{e1.1}--\eqref{e1.3} are satisfied and $M_3(a)<\infty$.
Then
\begin{equation}\label{e1.30}
(\mathbb{A}( \boldsymbol{\xi};a,\mu)u,u)\geqslant \mu_{-}C(a)| \boldsymbol{\xi} |^{2}\|u\|_{L_2(\Omega)}^{2},\ \ u\in
L_{2}(\Omega),\ \ \boldsymbol{\xi} \in\widetilde\Omega.
\end{equation}
\end{proposition}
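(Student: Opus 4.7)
The plan is to chain together three facts that are already on the table: the two-sided comparison \eqref{e1.17} between the quadratic forms of $\mathbb{A}(\boldsymbol{\xi};a,\mu)$ and $\mathbb{A}_0(\boldsymbol{\xi};a)$, the explicit diagonalization \eqref{e1.18} of $\mathbb{A}_0(\boldsymbol{\xi};a)$ by the discrete Fourier transform $\mathcal{F}$, and the pointwise symbol bound from Lemma \ref{lemma1.3}. The inequality in Proposition \ref{prop1.4} then falls out with no further analysis.

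First I would invoke the left half of \eqref{e1.17} to reduce matters to the unweighted case, namely to the inequality
\begin{equation*}
(\mathbb{A}_0(\boldsymbol{\xi};a)u,u) \geqslant C(a)|\boldsymbol{\xi}|^2 \|u\|_{L_2(\Omega)}^2,\quad u\in L_2(\Omega),\ \boldsymbol{\xi}\in\widetilde\Omega.
\end{equation*}
Since $\mu_-$ will then multiply through, it suffices to establish this with constant $C(a)$ given by \eqref{C(a)}.

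Next, by the unitary equivalence in \eqref{e1.18}, writing $v = \mathcal{F}u \in \ell_2(\mathbb{Z}^d)$ and using Parseval's identity, I express the quadratic form as
\begin{equation*}
(\mathbb{A}_0(\boldsymbol{\xi};a)u,u) = \sum_{\mathbf{n}\in\mathbb{Z}^d} \widehat{A}(\boldsymbol{\xi}+2\pi\mathbf{n})\,|v_\mathbf{n}|^2.
\end{equation*}
Now Lemma \ref{lemma1.3} provides the uniform pointwise bound $\widehat{A}(\boldsymbol{\xi}+2\pi\mathbf{n})\geqslant C(a)|\boldsymbol{\xi}|^2$ for every $\mathbf{n}\in\mathbb{Z}^d$ and every $\boldsymbol{\xi}\in\widetilde\Omega$, so term-by-term estimation gives
\begin{equation*}
(\mathbb{A}_0(\boldsymbol{\xi};a)u,u) \geqslant C(a)|\boldsymbol{\xi}|^2 \sum_{\mathbf{n}\in\mathbb{Z}^d}|v_\mathbf{n}|^2 = C(a)|\boldsymbol{\xi}|^2\|u\|_{L_2(\Omega)}^2,
\end{equation*}
where the last equality uses unitarity of $\mathcal{F}$. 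Combining this with the left inequality of \eqref{e1.17} yields \eqref{e1.30}.

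There is no real obstacle here; all the technical work was absorbed into Lemma \ref{lemma1.3}, whose nontrivial content is the quadratic lower bound on $\widehat A$ at the origin (requiring $M_2(a)<\infty$ to get a nonvanishing Hessian through $\mathcal{M}(a)$) together with the global strict positivity captured by $\mathcal{C}_r(a)$ and the third-moment control used to quantify the transition radius $r(a)$. What makes the present step clean is that the key bound in Lemma \ref{lemma1.3} holds uniformly across \emph{all} shifts $2\pi\mathbf{n}$, so no splitting of the Fourier sum into low and high modes is needed at this stage.
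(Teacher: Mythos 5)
Your argument is exactly the one the paper intends: in the text immediately preceding Proposition \ref{prop1.4} the authors state that the result follows from \eqref{e1.17}, \eqref{e1.18}, and Lemma \ref{lemma1.3}, which is precisely the three-step chain you carry out. The proof is correct and merely fills in the routine Parseval computation that the paper leaves implicit.
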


We also need the following estimate which is proved by the Schur lemma under conditions  \eqref{e1.1}--\eqref{e1.3}  and condition $M_1(a) < \infty$:
\begin{equation}
\label{e1.32}
\| \mathbb{A}(\boldsymbol{\xi};a,\mu) - \mathbb{A}(\boldsymbol{\eta};a,\mu)\|\leqslant \mu_{+} M_{1}(a)
|\boldsymbol{\xi} - \boldsymbol{\eta} |,\ \
\boldsymbol{\xi}, \boldsymbol{\eta} \in\widetilde\Omega.
\end{equation}

\section{Threshold characteristics of nonlocal convolution type operator  near the lower edge of the spectrum }\label{Sec2}

\subsection{The spectral edge of the operator $\mathbb{A}(\boldsymbol{\xi};a,\mu)$}
According to Lemma \ref{lemma1.2}, under conditions \eqref{e1.1}--\eqref{e1.3} the lower edge of the spectrum  of the operator
$\mathbb{A}(\mathbf{0};a,\mu)$ is an isolated simple eigenvalue  $\lambda_{0}=0$.
Let  $d_{0}:=d_{0}(a,\mu)$ be the distance from the point  $\lambda_{0}$ to the rest of the spectrum of
$\mathbb{A}(\mathbf{0};a,\mu)$. Under conditions \eqref{e1.1}--\eqref{e1.3}, estimate (\ref{e1.31})
implies that
\begin{equation}
\label{d0}
d_{0}(a,\mu)\geqslant\mu_{-}\mathcal{C}_{\pi}(a).
\end{equation}
Let
\begin{equation}
\label{delta0}
\delta_{0}(a,\mu):=  \frac{\mu_{-}\mathcal{C}_{\pi}(a)}{3 M_{1}(a) \mu_{+}}.
\end{equation}
It is easily seen that  $\delta_{0}(a,\mu) < \pi$, and therefore the ball  $|\boldsymbol{\xi}| \leqslant \delta_{0}(a,\mu)$ lies inside
 $\widetilde\Omega$.
Under conditions  \eqref{e1.1}--\eqref{e1.3} and $M_1(a)<\infty$, applying the perturbation theory arguments, we deduce from estimate \eqref{e1.32} that
\begin{equation*}
\operatorname{rank}E_{\mathbb{A}(\boldsymbol{\xi};a,\mu)}[0,d_{0}/3]=1,\ \
\sigma(\mathbb{A}( \boldsymbol{\xi};a,\mu))\cap(d_{0}/3;2d_{0}/3)=\varnothing, \quad
 |\boldsymbol{\xi}|\leqslant \delta_0(a, \mu).
\end{equation*}
We arrive at the following statement.

\begin{proposition}
\label{prop2.1}
Suppose that conditions  \eqref{e1.1}--\eqref{e1.3} are satisfied and {$M_1(a) < \infty$}.
Let  $\delta_0(a,\mu)$ be given by  \eqref{delta0}. Then for
$| \boldsymbol{\xi} |\leqslant \delta_{0}(a,\mu)$ the spectrum of the operator  $\mathbb{A}(\boldsymbol{\xi}) = \mathbb{A}(\boldsymbol{\xi};a,\mu)$ on the interval  $[0,d_{0}/3]$ consists of just one simple eigenvalue\/{\rm ;}
while the interval  $(d_{0}/3,2d_{0}/3)$ is free of the spectrum of  $\mathbb{A}(\boldsymbol{\xi};a,\mu)$.
\end{proposition}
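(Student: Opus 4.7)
The plan is to apply standard perturbation theory for bounded self-adjoint operators. By Lemma~\ref{lemma1.2} together with \eqref{d0}, the spectrum of $\mathbb{A}(\mathbf{0};a,\mu)$ consists of the isolated simple eigenvalue $\lambda_0=0$ and a remaining part contained in $[d_0,\infty)$. The Lipschitz estimate \eqref{e1.32} will transport this spectral picture to nearby values of $\boldsymbol{\xi}$, and the definition \eqref{delta0} of $\delta_0(a,\mu)$ is engineered precisely so that the perturbation has norm at most $d_0/3$.

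First, combining \eqref{e1.32}, \eqref{d0} and \eqref{delta0} gives, for $|\boldsymbol{\xi}|\le\delta_0(a,\mu)$,
\begin{equation*}
\|\mathbb{A}(\boldsymbol{\xi};a,\mu)-\mathbb{A}(\mathbf{0};a,\mu)\|\le \mu_+ M_1(a)\,\delta_0(a,\mu)=\frac{\mu_-\mathcal{C}_\pi(a)}{3}\le\frac{d_0}{3}.
\end{equation*}
For bounded self-adjoint operators $A,B$ with $\|A-B\|\le\varepsilon$, the standard Hausdorff-distance inclusion $\sigma(A)\subset\{t\in\mathbb{R}:\operatorname{dist}(t,\sigma(B))\le\varepsilon\}$ (an immediate consequence of the resolvent identity $\|(B-\zeta I)^{-1}\|=\operatorname{dist}(\zeta,\sigma(B))^{-1}$) then yields
\begin{equation*}
\sigma(\mathbb{A}(\boldsymbol{\xi};a,\mu))\subset[-d_0/3,d_0/3]\cup[2d_0/3,\infty).
\end{equation*}
Since $\mathbb{A}(\boldsymbol{\xi};a,\mu)\ge 0$ by the quadratic form identity \eqref{e1.11}, the spectrum in fact lies in $[0,d_0/3]\cup[2d_0/3,\infty)$, which already establishes the assertion that $(d_0/3,2d_0/3)$ is free of spectrum.

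To verify that the portion of spectrum in $[0,d_0/3]$ is exactly one simple eigenvalue, I would introduce the Riesz projection
\begin{equation*}
F(\boldsymbol{\xi}):=-\frac{1}{2\pi i}\oint_\Gamma (\mathbb{A}(\boldsymbol{\xi};a,\mu)-\zeta I)^{-1}\,d\zeta,
\end{equation*}
where $\Gamma$ is a fixed rectifiable contour in $\mathbb{C}$ enclosing $[0,d_0/3]$ and contained in the resolvent set of every $\mathbb{A}(\boldsymbol{\xi};a,\mu)$ with $|\boldsymbol{\xi}|\le\delta_0(a,\mu)$; such a contour exists thanks to the uniform gap established above. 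By \eqref{e1.32}, the resolvent is norm continuous in $\boldsymbol{\xi}$ uniformly on $\Gamma$, so $F(\boldsymbol{\xi})$ depends continuously on $\boldsymbol{\xi}$; the rank of a continuous family of orthogonal projections is locally constant, so since Lemma~\ref{lemma1.2} gives $\operatorname{rank} F(\mathbf{0})=1$, we conclude $\operatorname{rank} F(\boldsymbol{\xi})=1$ for every $|\boldsymbol{\xi}|\le\delta_0(a,\mu)$, completing the proof. No step presents a genuine obstacle; the only matter of care is tracking the explicit constants so that the choice \eqref{delta0} matches the spectral gap of width $d_0$ at $\boldsymbol{\xi}=\mathbf{0}$.
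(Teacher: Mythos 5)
Your argument is correct and is exactly the ``perturbation theory arguments'' that the paper invokes without elaboration: you make the uniform bound $\|\mathbb{A}(\boldsymbol{\xi})-\mathbb{A}(\mathbf{0})\|\le\mu_+M_1(a)\delta_0(a,\mu)=\mu_-\mathcal{C}_\pi(a)/3\le d_0/3$ explicit, use the Hausdorff-distance stability of the spectrum of self-adjoint operators together with nonnegativity to locate $\sigma(\mathbb{A}(\boldsymbol{\xi}))$ in $[0,d_0/3]\cup[2d_0/3,\infty)$, and then use norm-continuity of the Riesz projection along the (connected) ball $|\boldsymbol{\xi}|\le\delta_0(a,\mu)$ together with $\operatorname{rank}F(\mathbf{0})=1$ from Lemma~\ref{lemma1.2} to pin down the rank. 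This matches the paper's route and the same fixed contour $\Gamma$ the paper introduces afterwards; no gap.
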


Under conditions \eqref{e1.1}--\eqref{e1.3} and $M_k(a) < \infty$ the operator-valued function
$\mathbb{A}(\cdot;a,\mu)$ is $k$ times continuously differentiable in the operator norm on  $\mathcal{B}(L_{2}(\Omega))$. We have
\begin{equation}
\label{e2.1}
\begin{aligned}
&\partial^{\alpha}\mathbb{A}( \boldsymbol{\xi}; a,\mu)u(\mathbf{x}) = \intop_{\Omega}\widetilde
a_{\alpha}(\boldsymbol{\xi}, \mathbf{x}- \mathbf{y})\mu(\mathbf{x},\mathbf{y})u(\mathbf{y})\, d\mathbf{y},\ \ \mathbf{x} \in\Omega,\ \ u\in L_{2}(\Omega);
\\
&\widetilde a_{\alpha}(\boldsymbol{\xi},\mathbf{z})
=(-1)(-i)^{|\alpha|}\sum_{\mathbf{n} \in\mathbb{Z}^d}(\mathbf{z}+\mathbf{n})^{\alpha}a(\mathbf{z} + \mathbf{n})e^{-i \langle \boldsymbol{\xi}, \mathbf{z}+ \mathbf{n} \rangle},\
\ \alpha\in \mathbb{Z}^d_{+},\ \ |\alpha|\leqslant k.
\end{aligned}
\end{equation}

\begin{lemma}
Suppose that conditions \eqref{e1.1}--\eqref{e1.3} are satisfied.

\noindent
$1^\circ$. If $M_1(a) < \infty$, then
\begin{equation}
\label{2.3}
\| \mathbb{A}(\boldsymbol{\xi}) - \mathbb{A}(\mathbf{0})\| \leqslant \mu_+ M_1(a) |\boldsymbol{\xi}|,\quad
| \boldsymbol{\xi}| \leqslant \delta_0(a,\mu).
\end{equation}

\noindent
$2^\circ$. If $M_2(a) < \infty$, then
\begin{align}
\label{2.4}
&\mathbb{A}(\boldsymbol{\xi})= \mathbb{A}(\mathbf{0})+ [\Delta_1 \mathbb{A}](\boldsymbol{\xi} ) +
\mathbb{K}_1(\boldsymbol{\xi}),\quad  [\Delta_1 \mathbb{A}](\boldsymbol{\xi} ) :=\sum_{j=1}^{d}\partial_{j}
 \mathbb{A}(\mathbf{0})\xi_{j},
\\
\label{2.5}
&\| \mathbb{K}_1(\boldsymbol{\xi}) \| \leqslant \frac{1}{2} \mu_+ M_2(a) |\boldsymbol{\xi}|^2,\quad
|\boldsymbol{\xi}| \leqslant \delta_0(a,\mu).
\end{align}

\noindent
$3^\circ$. If $M_3(a) < \infty$, then
\begin{equation}
\label{2.6}
\begin{aligned}
&\mathbb{A}(\boldsymbol{\xi})= \mathbb{A}(\mathbf{0})+ [\Delta_1 \mathbb{A}](\boldsymbol{\xi} ) + [\Delta_2 \mathbb{A}](\boldsymbol{\xi} )
+ \mathbb{K}_2(\boldsymbol{\xi}),
\\
&[\Delta_2\mathbb{A}](\boldsymbol{\xi} ) := \frac{1}{2}\sum_{k,l=1}^{d}\partial_{k}\partial_{l} \mathbb{A}(\mathbf{0})\xi_{k}\xi_{l},
\end{aligned}
\end{equation}
and
\begin{equation}
\label{2.7}
\| \mathbb{K}_2(\boldsymbol{\xi}) \| \leqslant \frac{1}{6} \mu_+ M_3(a) | \boldsymbol{\xi}|^3,\quad
| \boldsymbol{\xi} | \leqslant \delta_0(a,\mu).
\end{equation}

\noindent
$4^\circ$. If $M_4(a) < \infty$, then
\begin{equation}
\label{2.8}
\begin{aligned}
& \mathbb{A}(\boldsymbol{\xi}) =\mathbb{A}(\mathbf{0})+ [\Delta_1 \mathbb{A}](\boldsymbol{\xi} ) + [\Delta_2 \mathbb{A}](\boldsymbol{\xi} )+
 [\Delta_3 \mathbb{A}](\boldsymbol{\xi} ) +\mathbb{K}_3(\boldsymbol{\xi}),
\\
&[\Delta_3 \mathbb{A}](\boldsymbol{\xi} ) :=  \frac{1}{6}\sum_{j,k,l=1}^{d}\partial_{j}\partial_{k} \partial_{l} \mathbb{A}(\mathbf{0})\xi_{j}\xi_{k} \xi_l,
\end{aligned}
\end{equation}
and
\begin{equation}
\label{2.9}
\| \mathbb{K}_3(\boldsymbol{\xi}) \| \leqslant \frac{1}{24} \mu_+ M_4(a) | \boldsymbol{\xi} |^4,\quad | \boldsymbol{\xi} | \leqslant \delta_0(a,\mu).
\end{equation}
\end{lemma}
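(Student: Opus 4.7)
My plan is to reduce all four statements to a single scalar Taylor expansion of the complex exponential, followed by a Schur estimate. First I would use the unfolding identity, obtained by expanding the sum defining $\widetilde{a}(\boldsymbol{\xi},\cdot)$ in (\ref{a_tilde}), changing variables $\mathbf{y}\mapsto \mathbf{y}+\mathbf{n}$, and invoking periodicity of $u$ and $\mu$, to rewrite
\begin{equation*}
\mathbb{B}(\boldsymbol{\xi};a,\mu)u(\mathbf{x}) = \intop_{\mathbb{R}^d} a(\mathbf{x}-\mathbf{y})\, e^{-i\langle \boldsymbol{\xi},\mathbf{x}-\mathbf{y}\rangle}\, \mu(\mathbf{x},\mathbf{y})\, u(\mathbf{y})\, d\mathbf{y},
\end{equation*}
with $u$ extended $\mathbb{Z}^d$-periodically to $\mathbb{R}^d$. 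Since $\mathbb{A}(\boldsymbol{\xi}) = p(\mathbf{x}) - \mathbb{B}(\boldsymbol{\xi})$ and $p$ is $\boldsymbol{\xi}$-independent, the entire $\boldsymbol{\xi}$-dependence sits in the scalar factor $e^{-i\langle \boldsymbol{\xi},\mathbf{x}-\mathbf{y}\rangle}$.

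Statement $1^\circ$ is then simply the specialization of (\ref{e1.32}) to $\boldsymbol{\eta}=\mathbf{0}$. For statements $2^\circ$-$4^\circ$ I would proceed uniformly: for $N\in\{2,3,4\}$ apply the Lagrange-type bound $\bigl|e^{-it} - \sum_{k=0}^{N-1}\tfrac{(-it)^k}{k!}\bigr|\leqslant \tfrac{|t|^N}{N!}$ with $t=\langle \boldsymbol{\xi},\mathbf{x}-\mathbf{y}\rangle$, so that $|t|\leqslant|\boldsymbol{\xi}|\,|\mathbf{x}-\mathbf{y}|$. Expanding each $\langle \boldsymbol{\xi},\mathbf{x}-\mathbf{y}\rangle^k$ by the multinomial identity $\langle \boldsymbol{\xi},\mathbf{z}\rangle^k=\sum_{|\alpha|=k}\tfrac{k!}{\alpha!}\boldsymbol{\xi}^\alpha \mathbf{z}^\alpha$ and comparing with the explicit derivative formula (\ref{e2.1}), one identifies the $k$-th polynomial term of the scalar expansion (multiplied by $-1$ coming from $\mathbb{A}=p-\mathbb{B}$) as $[\Delta_k\mathbb{A}](\boldsymbol{\xi})$ for $k\geqslant 1$, while the $k=0$ term recombines with $p(\mathbf{x})$ to give $\mathbb{A}(\mathbf{0})$. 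This produces exactly the decompositions (\ref{2.4}), (\ref{2.6}), (\ref{2.8}), with $\mathbb{K}_{N-1}(\boldsymbol{\xi})$ realized as an integral operator whose kernel is pointwise majorized by $\tfrac{|\boldsymbol{\xi}|^N}{N!}|\mathbf{x}-\mathbf{y}|^N a(\mathbf{x}-\mathbf{y})\,\mu_+$.

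To bound $\mathbb{K}_{N-1}$, I would apply the Schur lemma to this symmetric majorant. Since $\sup_\mathbf{x}\int_{\mathbb{R}^d}|\mathbf{x}-\mathbf{y}|^N a(\mathbf{x}-\mathbf{y})\,d\mathbf{y}=M_N(a)$, and the same with the roles of $\mathbf{x}$ and $\mathbf{y}$ interchanged (by evenness of $a$), the hypothesis $M_N(a)<\infty$ immediately yields $\|\mathbb{K}_{N-1}(\boldsymbol{\xi})\|\leqslant \tfrac{|\boldsymbol{\xi}|^N}{N!}\mu_+ M_N(a)$, which is precisely (\ref{2.5}), (\ref{2.7}), (\ref{2.9}).

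No conceptual obstacle is present; the argument is driven by one scalar Taylor expansion and Schur's lemma. The only item requiring attention is the coefficient bookkeeping that identifies $\sum_{|\alpha|=k}\tfrac{\boldsymbol{\xi}^\alpha}{\alpha!}\partial^\alpha\mathbb{A}(\mathbf{0})$ with the symmetric forms $[\Delta_k\mathbb{A}](\boldsymbol{\xi})$ as written in (\ref{2.4}), (\ref{2.6}), (\ref{2.8}); this is a direct consequence of the multinomial identity above. The moment hypothesis $M_N(a)<\infty$ serves a double role, guaranteeing both the $N$-fold differentiability of $\mathbb{A}(\cdot)$ underlying (\ref{e2.1}) and the finiteness of the Schur constant for the remainder, so the uses of the hypothesis are perfectly matched.
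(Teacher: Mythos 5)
Your proposal is correct and is essentially the same argument as the paper's: a Taylor expansion of the kernel plus a Schur estimate, with the moment condition $M_N(a)<\infty$ supplying both the differentiability of $\mathbb{A}(\cdot)$ and the finiteness of the Schur constant. The only (cosmetic) difference is that the paper invokes the Hadamard lemma at the operator level and then passes to the kernel via \eqref{e2.1} and bounds the iterated $s$-integral to get the factor $1/24$, whereas your unfolding reveals the scalar factor $e^{-i\langle\boldsymbol{\xi},\mathbf{x}-\mathbf{y}\rangle}$ and you apply the Lagrange remainder bound directly, which yields the same $1/N!$ constant and pleasantly treats $N=1,\dots,4$ in one stroke.
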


\begin{proof}
The proof of all four statements  is based on the Hadamard lemma and the Schur lemma.
For example, let us prove statement $4^\circ$.

By the Hadamard lemma, the operator-valued function
$\mathbb{A}(\boldsymbol{\xi})$ admits expansion  \eqref{2.8}, where
\begin{equation*}
\mathbb{K}_3(\boldsymbol{\xi}) =
\sum_{j=1}^{d}\sum_{k=1}^{d}\sum_{l=1}^{d} \sum_{m=1}^d
\xi_{j} \xi_{k} \xi_{l} \xi_m
\intop_{0}^{1}ds_{1}s_{1}^{3}\intop_{0}^{1}ds_{2}\,s_{2}^2
\intop_{0}^{1}ds_{3} \,s_3 \intop_{0}^{1}ds_{4}\,\partial_{j}\partial_{k}\partial_{l} \partial_m {\mathbb A}(s_{1}s_{2}s_{3} s_4 \boldsymbol{\xi}).
\end{equation*}
By \eqref{e2.1}, it follows that $\mathbb{K}_3(\boldsymbol{\xi}) : L_2(\Omega) \to L_2(\Omega)$ is an integral
operator with the kernel
\begin{multline*}
{\mathcal K}(\boldsymbol{\xi}; \mathbf{x}, \mathbf{y})
= - \mu(\mathbf{x},\mathbf{y})\! \sum_{\mathbf{n} \in \mathbb{Z}^d} \langle \mathbf{x} - \mathbf{y} +\mathbf{n}, \boldsymbol{\xi} \rangle^4 a(\mathbf{x} - \mathbf{y}+ \mathbf{n})
\\
\times
\intop_{0}^{1}\!ds_{1}s_{1}^{3}\intop_{0}^{1}\!ds_{2}\,s_{2}^2
\intop_{0}^{1}\!ds_{3} \,s_3 \intop_{0}^{1}\!ds_{4}\,   e^{- i s_{1}s_{2}s_{3} s_4 \langle \boldsymbol{\xi}, \mathbf{x} - \mathbf{y} +\mathbf{n} \rangle}.
\end{multline*}
Hence,
$$
\intop_\Omega |{\mathcal K}( \boldsymbol{\xi};\mathbf{x},\mathbf{y})| \,d\mathbf{y}
\leqslant \frac{1}{24} \mu_+ \intop_{\mathbb{R}^d} a(\mathbf{x} - \mathbf{y})\langle \mathbf{x} - \mathbf{y}, \boldsymbol{\xi} \rangle^4 \, d\mathbf{y}
\leqslant \frac{1}{24} \mu_+ M_4(a) | \boldsymbol{\xi} |^4.
$$
The quantity $\int_\Omega |{\mathcal K}(\boldsymbol{\xi}; \mathbf{x}, \mathbf{y})| \,d\mathbf{x}$
satisfies the same  estimate.
By the Schur lemma, we obtain estimate \eqref{2.9}.
\end{proof}

 Using the Schur lemma, it is easy to justify the following estimates:
\begin{align}\label{e2.3}
\left\| [\Delta_1\mathbb{A}](\boldsymbol{\xi} ) \right\| &\leqslant \mu_{+} M_{1}(a) | \boldsymbol{\xi} |;
\\
\label{e2.4}
\left\| [\Delta_2 \mathbb{A}](\boldsymbol{\xi} ) \right\|
&\leqslant \frac{1}{2}\mu_{+} M_{2}(a) | \boldsymbol{\xi} |^2.
\end{align}

\subsection{Threshold approximations}
Let $F(\boldsymbol{\xi})$ be the spectral projetion of the operator $\mathbb{A}(\boldsymbol{\xi};a,\mu)$
that corresponds to the interval $[0,d_{0}/3]$. The symbol $\mathfrak{N}$ stands for  the kernel
$\operatorname{Ker} \mathbb{A}(\mathbf{0};a,\mu)=\mathcal{L}\{\mathbf{1}_{\Omega}\}$;
by $P$ we denote the orthogonal projection onto $\mathfrak{N}$; then $P = (\cdot, \mathbf{1}_\Omega)\mathbf{1}_\Omega$.
Let $\Gamma$ be a contour on the complex plane that is equidistant to the interval $[0,d_{0}/3]$
and passes through the middle point of the interval $(d_{0}/3,2d_{0}/3)$.
By the Riesz formula, the following representations are valid:
\begin{align}
\label{e2.6}
F(\boldsymbol{\xi})  &= - \frac{1}{2\pi i}\ointop_{\Gamma}(\mathbb{A}(\boldsymbol{\xi})-\zeta I)^{-1}\, d\zeta,\ \
|\boldsymbol{\xi} |\leqslant\delta_{0}(a,\mu),
\\
\label{e2.6a}
\mathbb{A}(\boldsymbol{\xi}) F(\boldsymbol{\xi}) &= - \frac{1}{2\pi i}\ointop_{\Gamma}(\mathbb{A}(\boldsymbol{\xi})-\zeta I)^{-1}\zeta\,d\zeta,\ \ | \boldsymbol{\xi} |\leqslant \delta_{0}(a,\mu);
\end{align}
here we integrate along the contour counterclockwise.

Our goal is to obtain an approximation for the operator $F(\boldsymbol{\xi})$ with an error
$O(|\boldsymbol{\xi}|^2)$ and an approximation for the operator $\mathbb{A}(\boldsymbol{\xi}) F(\boldsymbol{\xi})$
with an error  $O(| \boldsymbol{\xi} |^4)$.
Earlier in  \cite{PSlSuZh}, less accurate approximations for
 $F(\boldsymbol{\xi})$ and $\mathbb{A}(\boldsymbol{\xi}) F(\boldsymbol{\xi})$ were obtained
(with errors $O(| \boldsymbol{\xi}|)$ and $O(|\boldsymbol{\xi}|^3)$ respectively).
It is convenient for us to reproduce these results here  (see Propositions \ref{prop2.3} and \ref{prop2.6} below);
when calculating approximations, we consistently apply the method of integrating the resolvent along the contour (called the  ``third method'' in \cite[Sec.~4]{PSlSuZh}).

\begin{proposition}
\label{prop2.3}
Suppose that conditions  \eqref{e1.1}--\eqref{e1.3} are satisfied and
$M_1(a) < \infty$. Then
\begin{equation}
\label{F-P}
\| F(\boldsymbol{\xi}) - P \| \leqslant C_1(a,\mu) | \boldsymbol{\xi} |, \quad  |\boldsymbol{\xi}| \leqslant \delta_0(a,\mu).
\end{equation}
The constant $C_1(a,\mu)$ is defined below in \eqref{C1} and depends only on $\mu_-$, $\mu_+$,  $\mathcal{C}_\pi(a)$, $M_1(a)$.
\end{proposition}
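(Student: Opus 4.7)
The plan is to apply the Riesz projection formula \eqref{e2.6} at both $\boldsymbol{\xi}$ and $\mathbf{0}$, noting that since $P = F(\mathbf{0})$ (which follows from Lemma \ref{lemma1.2} together with Proposition \ref{prop2.1} applied at the origin), one obtains
\begin{equation*}
F(\boldsymbol{\xi}) - P = -\frac{1}{2\pi i}\ointop_{\Gamma}\Bigl[(\mathbb{A}(\boldsymbol{\xi})-\zeta I)^{-1} - (\mathbb{A}(\mathbf{0})-\zeta I)^{-1}\Bigr] d\zeta,
\end{equation*}
and then insert the second resolvent identity
\begin{equation*}
(\mathbb{A}(\boldsymbol{\xi})-\zeta I)^{-1} - (\mathbb{A}(\mathbf{0})-\zeta I)^{-1} = -(\mathbb{A}(\boldsymbol{\xi})-\zeta I)^{-1}\bigl(\mathbb{A}(\boldsymbol{\xi})-\mathbb{A}(\mathbf{0})\bigr)(\mathbb{A}(\mathbf{0})-\zeta I)^{-1}.
\end{equation*}
The contour $\Gamma$ is the same for both operators precisely because Proposition \ref{prop2.1} guarantees that for $|\boldsymbol{\xi}|\le\delta_0(a,\mu)$ the interval $(d_0/3, 2d_0/3)$ is free of spectrum of $\mathbb{A}(\boldsymbol{\xi})$, so $\Gamma$ separates the single simple eigenvalue on $[0,d_0/3]$ from the rest of the spectrum.

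Next I would carry out the three routine estimates on the integrand. First, since $\Gamma$ is the boundary of a neighborhood of $[0,d_0/3]$ at constant distance $d_0/6$ from it, and since for each $\boldsymbol{\xi}$ with $|\boldsymbol{\xi}|\le\delta_0$ the spectrum of $\mathbb{A}(\boldsymbol{\xi})$ is contained in $[0,d_0/3]\cup[2d_0/3,+\infty)$, one has
\begin{equation*}
\|(\mathbb{A}(\boldsymbol{\xi})-\zeta I)^{-1}\| \le \frac{6}{d_0},\qquad \|(\mathbb{A}(\mathbf{0})-\zeta I)^{-1}\| \le \frac{6}{d_0},\qquad \zeta\in\Gamma.
\end{equation*}
Second, the middle factor is controlled by \eqref{2.3}: $\|\mathbb{A}(\boldsymbol{\xi})-\mathbb{A}(\mathbf{0})\|\le \mu_+ M_1(a)|\boldsymbol{\xi}|$. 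Third, the length $|\Gamma|$ of the contour is bounded by $2d_0/3 + 2\pi d_0/6 = (2/3 + \pi/3)d_0$, a quantity linear in $d_0$.

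Combining these with the ML-inequality for the contour integral yields
\begin{equation*}
\|F(\boldsymbol{\xi}) - P\| \le \frac{|\Gamma|}{2\pi}\cdot\frac{36}{d_0^{2}}\cdot \mu_+ M_1(a)|\boldsymbol{\xi}| \le \frac{C'\mu_+ M_1(a)}{d_0}\,|\boldsymbol{\xi}|
\end{equation*}
with an absolute numerical constant $C'$. Finally, to obtain the claimed dependence of $C_1(a,\mu)$ only on $\mu_-,\mu_+,\mathcal{C}_\pi(a),M_1(a)$, I would invoke the lower bound \eqref{d0}, namely $d_0(a,\mu)\ge \mu_-\mathcal{C}_\pi(a)$, to replace $d_0$ in the denominator. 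This gives
\begin{equation*}
C_1(a,\mu) := \frac{C'\mu_+ M_1(a)}{\mu_-\mathcal{C}_\pi(a)}
\end{equation*}
(with $C'$ a harmless explicit constant), completing the proof.

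I do not expect any real obstacle here: the argument is a standard perturbative Riesz-integral estimate. The only point requiring a little care is ensuring that the contour $\Gamma$ is admissible for \emph{both} operators uniformly in $\boldsymbol{\xi}$ with $|\boldsymbol{\xi}|\le\delta_0$, but this is exactly the content of Proposition \ref{prop2.1}, which was set up precisely to make this step automatic. The bookkeeping to verify that the final constant depends only on $\mu_\pm,\mathcal{C}_\pi(a),M_1(a)$ is essentially tautological once \eqref{d0} is used.
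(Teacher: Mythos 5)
Your proof is correct and takes essentially the same route as the paper: Riesz formula for $F(\boldsymbol{\xi})-P$, the resolvent identity, the resolvent bounds $\le 6/d_0$ on $\Gamma$, the Lipschitz estimate \eqref{2.3}, and the ML inequality, yielding exactly the constant \eqref{C1}. The only cosmetic difference is that you explicitly substitute the lower bound $d_0\ge\mu_-\mathcal{C}_\pi(a)$ from \eqref{d0}, whereas the paper keeps $d_0$ in \eqref{C1} and leaves that replacement implicit in the statement about the dependence of $C_1$.
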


\begin{proof}
Denote
\begin{align*}
R( \boldsymbol{\xi},\zeta)&:=(\mathbb{A}(\boldsymbol{\xi})-\zeta I)^{-1},\ \
| \boldsymbol{\xi} |\leqslant \delta_{0}(a,\mu),\ \ \zeta\in\Gamma;
\\
R_{0}(\zeta)&:=R(\mathbf{0},\zeta),\ \ \zeta\in\Gamma;
\\
\Delta \mathbb{A}(\boldsymbol{\xi})&:=\mathbb{A}(\boldsymbol{\xi})- \mathbb{A}(\mathbf{0}),\ \
| \boldsymbol{\xi} | \leqslant \delta_{0}(a,\mu).
\end{align*}

By the Riesz formula \eqref{e2.6}, the  difference $F(\boldsymbol{\xi})-P$ can be represented as
\begin{equation}\label{e2.30}
F(\boldsymbol{\xi})-P= - \frac{1}{2\pi i}\ointop_{\Gamma} \left( R(\boldsymbol{\xi},\zeta) - R_0(\zeta)\right) \, d\zeta,
\ \ | \boldsymbol{\xi}|\leqslant \delta_{0}(a,\mu).
\end{equation}
We apply the resolvent identity
\begin{equation}\label{e2.31}
R(\boldsymbol{\xi},\zeta) = R_0(\zeta)
- R(\boldsymbol{\xi},\zeta) \Delta\mathbb{A}(\boldsymbol{\xi}) R_0(\zeta), \quad
| \boldsymbol{\xi} |\leqslant \delta_{0}(a,\mu),\ \ \zeta\in\Gamma.
\end{equation}
The length of the contour  $\Gamma$ is equal to
$\frac{\pi+2}{3}d_{0}$, and both resolvents on the contour
$\Gamma$ satisfy the estimates
\begin{equation}\label{e2.32}
\| R(\boldsymbol{\xi},\zeta)\|\leqslant 6d_{0}^{-1},\ \
\| R_0 (\zeta)\|\leqslant 6d_{0}^{-1}, \ \ |\boldsymbol{\xi}| \leqslant \delta_{0}(a,\mu),\quad \zeta \in \Gamma.
\end{equation}
Combining \eqref{2.3} and
\eqref{e2.30}--\eqref{e2.32}, we arrive at estimate  \eqref{F-P} with the constant
\begin{equation}
\label{C1}
C_{1}(a,\mu):= \frac{6(\pi+2)\mu_{+}M_{1}(a)}{\pi d_0}.
\end{equation}
\end{proof}

\begin{proposition}
\label{prop2.4}
Suppose that conditions  \eqref{e1.1}--\eqref{e1.3} are satisfied and  {$M_2(a) < \infty$}. Then
\begin{equation}
\label{F= P+ F1}
 F(\boldsymbol{\xi} ) =  P + [F]_1( \boldsymbol{\xi}) + \Phi(\boldsymbol{\xi}),
 \quad  [F]_1(\boldsymbol{\xi}) : = \sum_{j=1}^d F_j \xi_j,
\end{equation}
and
\begin{equation}
\label{F-P-O(xi)}
\left\| \Phi(\boldsymbol{\xi} ) \right\| \leqslant C_2(a,\mu) |\boldsymbol{\xi} |^2, \quad
| \boldsymbol{\xi} | \leqslant \delta_0(a,\mu).
\end{equation}
The operators $F_j$ are given by
\begin{equation}
\label{F_j=prop}
 F_j =  - P \partial_{j}\mathbb{A}(\mathbf{0})  P^\perp \mathbb{A}(\mathbf{0})^{-1}P^\perp   -
 P^\perp \mathbb{A}(\mathbf{0})^{-1}P^\perp \partial_{j}\mathbb{A}(\mathbf{0}) P, \quad j=1,\dots,d.
\end{equation}
Here   $\mathbb{A}(\mathbf{0})^{-1}$ is understood as the inverse operator to  $ \mathbb{A}(\mathbf{0})
\vert_{\mathfrak{N}^\perp} : {\mathfrak{N}^\perp} \to {\mathfrak{N}^\perp}$.
The constant $C_2(a,\mu)$ is defined below in  \eqref{C2} and depends only on $\mu_-$, $\mu_+$,  $\mathcal{C}_\pi(a)$, $M_1(a)$, $M_2(a)$.
The operator $[F]_1(\boldsymbol{\xi})$  satisfies the estimate
\begin{equation}
\label{[F]1j=le}
\left\|[F]_1(\boldsymbol{\xi}) \right\| \leqslant C_1(a,\mu) | \boldsymbol{\xi} |,\ \ \boldsymbol{\xi} \in \widetilde{\Omega}.
\end{equation}
\end{proposition}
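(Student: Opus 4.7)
The plan is to refine the contour-integral argument of Proposition \ref{prop2.3} by iterating the resolvent identity once more and using the quadratic Taylor expansion \eqref{2.4}. Writing $\Delta\mathbb{A}(\boldsymbol{\xi}) = [\Delta_1\mathbb{A}](\boldsymbol{\xi}) + \mathbb{K}_1(\boldsymbol{\xi})$, I iterate \eqref{e2.31} to get
\[
R(\boldsymbol{\xi},\zeta) - R_0(\zeta) = -R_0(\zeta)\,\Delta\mathbb{A}(\boldsymbol{\xi})\,R_0(\zeta) + R_0(\zeta)\,\Delta\mathbb{A}(\boldsymbol{\xi})\,R_0(\zeta)\,\Delta\mathbb{A}(\boldsymbol{\xi})\,R(\boldsymbol{\xi},\zeta).
\]
Substituting in \eqref{e2.30} and using \eqref{2.3}, \eqref{2.5}, \eqref{e2.32}, and the length $\tfrac{\pi+2}{3}d_0$ of $\Gamma$, the contribution of the last term is bounded by $C\,\mu_+^2 M_1(a)^2 |\boldsymbol{\xi}|^2$, and the $\mathbb{K}_1$-part of the first term by $C\,\mu_+ M_2(a)|\boldsymbol{\xi}|^2$. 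Thus both belong to the remainder $\Phi(\boldsymbol{\xi})$, and the entire linear-in-$\boldsymbol{\xi}$ part of $F(\boldsymbol{\xi})-P$ is given by
\[
[F]_1(\boldsymbol{\xi}) = \frac{1}{2\pi i}\ointop_\Gamma R_0(\zeta)\,[\Delta_1\mathbb{A}](\boldsymbol{\xi})\,R_0(\zeta)\,d\zeta.
\]

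To evaluate this integral explicitly, I would use the Riesz decomposition of $R_0(\zeta)$ adapted to the splitting $L_2(\Omega) = \mathfrak{N} \oplus \mathfrak{N}^\perp$. Since $\mathbb{A}(\mathbf{0})P = 0$ and the remaining spectrum of $\mathbb{A}(\mathbf{0})$ lies outside $\Gamma$ (by Proposition \ref{prop2.1}), one has $R_0(\zeta) = -\zeta^{-1} P + S_0(\zeta)$, where $S_0(\zeta) := P^\perp(\mathbb{A}(\mathbf{0})\vert_{\mathfrak{N}^\perp} - \zeta I)^{-1} P^\perp$ is holomorphic inside $\Gamma$ and satisfies $S_0(0) = P^\perp \mathbb{A}(\mathbf{0})^{-1} P^\perp$. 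Expanding $R_0 T R_0$ with $T := [\Delta_1 \mathbb{A}](\boldsymbol{\xi})$ yields four terms: $\zeta^{-2}PTP$ has zero residue at the origin; $S_0(\zeta) T S_0(\zeta)$ is holomorphic inside $\Gamma$ and integrates to zero; and the two cross terms $-\zeta^{-1}(PTS_0(\zeta) + S_0(\zeta)TP)$ contribute $-(P T S_0(0) + S_0(0) T P)$ by the Cauchy formula. Writing $T = \sum_j \partial_j\mathbb{A}(\mathbf{0})\xi_j$, this produces exactly $[F]_1(\boldsymbol{\xi}) = \sum_j F_j \xi_j$ with $F_j$ as in \eqref{F_j=prop}.

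For the estimate \eqref{[F]1j=le}, I would bound the integral representation of $[F]_1$ directly by $\tfrac{1}{2\pi} \cdot \tfrac{(\pi+2)d_0}{3} \cdot (6d_0^{-1})^2 \cdot \mu_+ M_1(a)|\boldsymbol{\xi}|$ using \eqref{e2.3} and \eqref{e2.32}, which gives precisely $C_1(a,\mu)|\boldsymbol{\xi}|$ from \eqref{C1}; homogeneity of $[F]_1$ in $\boldsymbol{\xi}$ extends the bound from the ball $|\boldsymbol{\xi}|\leqslant\delta_0$ to all of $\widetilde\Omega$. The constant $C_2(a,\mu)$ arises by collecting the explicit bounds of the two remainder pieces. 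The approach is conceptually identical to \cite[Sec.~4]{PSlSuZh}, and the only real bookkeeping difficulty is tracking which operator-valued terms in the iterated resolvent expansion land in the linear-in-$\boldsymbol{\xi}$ component versus the $O(|\boldsymbol{\xi}|^2)$ remainder; the argument uses only the finite smoothness granted by $M_2(a)<\infty$ rather than any analyticity.
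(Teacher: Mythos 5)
Your proposal is correct and follows essentially the same route as the paper's proof: iterate the resolvent identity once, split $\Delta\mathbb{A}=[\Delta_1\mathbb{A}]+\mathbb{K}_1$ via \eqref{2.4}, absorb the quadratic pieces into $\Phi$, evaluate the surviving contour integral $\tfrac{1}{2\pi i}\oint_\Gamma R_0\,[\Delta_1\mathbb{A}]\,R_0\,d\zeta$ via the decomposition $R_0(\zeta)=-\zeta^{-1}P+R_0^\perp(\zeta)$, and bound $[F]_1$ directly from that integral. The only cosmetic differences are which side of the resolvent identity carries the extra factor of $R(\boldsymbol{\xi},\zeta)$ in the remainder term and that you spell out the vanishing of the $\zeta^{-2}PTP$ contribution as a residue computation, which the paper leaves implicit.
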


\begin{proof}
Iterating the resolvent identity  \eqref{e2.31}, we obtain
\begin{equation}
\label{res_iden2}
R(\boldsymbol{\xi},\zeta) = R_0(\zeta)
- R_0(\zeta) \Delta\mathbb{A}(\boldsymbol{\xi}) R_0(\zeta) + R(\boldsymbol{\xi},\zeta) \Delta\mathbb{A}(\boldsymbol{\xi}) R_0(\zeta) \Delta\mathbb{A}(\boldsymbol{\xi}) R_0(\zeta)
\end{equation}
for $| \boldsymbol{\xi}| \leqslant \delta_{0}(a,\mu)$ and $\zeta\in\Gamma$.
By \eqref{2.3} and \eqref{e2.32}, the operator $$Z_1(\boldsymbol{\xi},\zeta) := R(\boldsymbol{\xi},\zeta) \Delta\mathbb{A}(\boldsymbol{\xi}) R_0(\zeta) \Delta\mathbb{A}(\boldsymbol{\xi}) R_0(\zeta)$$ satisfies
\begin{equation}
\label{res_est1}
\| Z_1( \boldsymbol{\xi},\zeta) \| \leqslant (6 d_0^{-1})^3 \mu^2_+ M_1(a)^2 | \boldsymbol{\xi} |^2,
\quad
| \boldsymbol{\xi} |\leqslant \delta_{0}(a,\mu),\ \ \zeta\in\Gamma.
\end{equation}
Next, from \eqref{2.4} and \eqref{res_iden2} it follows that
\begin{equation}
\label{res_iden3a}
R( \boldsymbol{\xi},\zeta) = R_0(\zeta)
- R_0(\zeta) [\Delta_1 \mathbb{A}](\boldsymbol{\xi} )  R_0(\zeta)
+ Z_1(\boldsymbol{\xi},\zeta) + Z_2(\boldsymbol{\xi},\zeta)
\end{equation}
for $| \boldsymbol{\xi}| \leqslant \delta_{0}(a,\mu)$ and $\zeta\in\Gamma$,
where $Z_2(\boldsymbol{\xi},\zeta) = - R_0(\zeta) \mathbb{K}_1(\boldsymbol{\xi}) R_0(\zeta)$.
By \eqref{2.5} and \eqref{e2.32}, we have
\begin{equation}
\label{res_est2}
\| Z_2(\boldsymbol{\xi},\zeta) \| \leqslant 18 d_0^{-2} \mu_+ M_2(a) | \boldsymbol{\xi} |^2,
\quad
| \boldsymbol{\xi}|\leqslant \delta_{0}(a,\mu),\ \ \zeta\in\Gamma.
\end{equation}
Applying the Riesz formula \eqref{e2.6} and representation  \eqref{res_iden3a}, we obtain
\begin{equation}
\label{F=}
F(\boldsymbol{\xi}) = P + \sum_{j=1}^d F_j {\xi}_j + \Phi(\boldsymbol{\xi}), \quad
| \boldsymbol{\xi} |\leqslant \delta_{0}(a,\mu),
\end{equation}
where
\begin{align}
\label{Fj=}
F_j &=  \frac{1}{2\pi i}\ointop_{\Gamma} R_0(\zeta) \partial_j \mathbb{A}(\mathbf{0}) R_0(\zeta)  \, d\zeta,
\\
\label{Phi=}
\Phi(\boldsymbol{\xi}) &= - \frac{1}{2\pi i}\ointop_{\Gamma} (Z_1(\boldsymbol{\xi},\zeta) + Z_2(\boldsymbol{\xi},\zeta))  \, d\zeta.
\end{align}
Relations  \eqref{res_est1} and \eqref{res_est2} yield  the following estimate for the operator \eqref{Phi=}:
\begin{equation}
\label{res_est3}
 \| \Phi(\boldsymbol{\xi}) \| \leqslant C_2(a,\mu) | \boldsymbol{\xi} |^2, \quad
| \boldsymbol{\xi} |\leqslant\delta_{0}(a,\mu),
\end{equation}
\begin{equation}
\label{C2}
C_{2}(a,\mu):=\frac{(\pi+2)}{\pi} \left( \frac{36 \mu_{+}^2 M_{1}(a)^2}{d_0^2} +
\frac{3 \mu_{+} M_{2}(a)}{d_0}\right).
\end{equation}
Now representation  \eqref{F= P+ F1} and estimate \eqref{F-P-O(xi)} follow from  \eqref{F=} and \eqref{res_est3}.

To calculate the integral in  \eqref{Fj=}, we use the following representation of the resolvent of
$\mathbb{A}(\mathbf{0})$:
\begin{equation}\label{4.10}
R_{0}(\zeta)=R_{0}(\zeta)P+R_{0}(\zeta)P^{\bot}=-\frac{1}{\zeta}P+R_{0}(\zeta)P^{\bot},\ \ \zeta\in\Gamma.
\end{equation}
Substituting  \eqref{4.10} into the contour integral  \eqref{Fj=} and taking into account the fact that
the operator-valued function  $R_{0}^{\bot}(\zeta):=R_{0}(\zeta)P^{\bot}$ is holomorphic inside the contour
$\Gamma$, we obtain
\begin{multline*}
F_{j}=  \frac{1}{2\pi i}\oint_{\Gamma} \Bigl( -\frac{1}{\zeta}P+R_{0}^{\bot}(\zeta)\Bigr) \partial_{j}\mathbb{A}(\mathbf{0})
\Bigl(-\frac{1}{\zeta}P+R_{0}^{\bot}(\zeta)\Bigr) \,d\zeta
\\
=
 - \frac{1}{2\pi i}\oint_{\Gamma} \frac{1}{\zeta} \left( P \partial_{j}\mathbb{A}(\mathbf{0}) R_{0}^{\bot}(\zeta)
 +R_{0}^{\bot}(\zeta)  \partial_{j}\mathbb{A}(\mathbf{0}) P \right) \,d\zeta
\\
= - P \partial_{j}\mathbb{A}(\mathbf{0}) R_0^\bot(0) - R_0^\bot(0) \partial_{j}\mathbb{A}(\mathbf{0}) P,\ \ j=1,\dots,d.
\end{multline*}
The operator $R_0^\bot(0)$ can be written as  $R_0^\bot(0) = P^\perp \mathbb{A}(\mathbf{0})^{-1}P^\perp$.
Here   $\mathbb{A}(\mathbf{0})^{-1}$ is understood as the inverse operator to  $ \mathbb{A}(\mathbf{0})
\vert_{\mathfrak{N}^\perp} : {\mathfrak{N}^\perp} \to {\mathfrak{N}^\perp}$; this operator is correctly defined and bounded.  We arrive at representation \eqref{F_j=prop}.

It remains to prove estimate \eqref{[F]1j=le}.
From \eqref{Fj=}, the estimate $\|R_0(\zeta)\| \leqslant 6 d_0^{-1}$ for  $\zeta \in \Gamma$, and \eqref{e2.3}
it follows that
$$
\left\|[F]_1(\boldsymbol{\xi}) \right\| \leqslant \frac{6 (\pi +2)}{\pi d_0} \| [\Delta_1 \mathbb{A} ](\boldsymbol{\xi}) \|
\leqslant   \frac{6 (\pi +2) \mu_+ M_1(a)}{\pi d_0} | \boldsymbol{\xi} | = C_1(a,\mu) |\boldsymbol{\xi}|,\quad \boldsymbol{\xi} \in \widetilde{\Omega}.
$$
\end{proof}

Representation \eqref{F_j=prop}  can be  ``deciphered''  in terms of solutions of auxiliary problems.
From the identity
$P=(\cdot,\mathbf{1}_{\Omega})\mathbf{1}_{\Omega}$ it follows that
\begin{equation}\label{e2.20}
\partial_{j}\mathbb{A}(\mathbf{0})P=i(\cdot,\mathbf{1}_{\Omega})w_{j},\ \
\text{where}\ \ iw_{j}=\partial_{j}\mathbb{A}(\mathbf{0})\mathbf{1}_{\Omega},\
\ j=1,\dots,d.
\end{equation}
By \eqref{e2.1},
\begin{multline}
\label{e2.21}
w_{j}(\mathbf{x})=\overline{w_{j}(\mathbf{x})}=\intop_{\Omega}\sum_{\mathbf{n}\in\mathbb{Z}^d}(x_{j}-y_{j}+n_{j})a(\mathbf{x}-\mathbf{y}+\mathbf{n})\mu(\mathbf{x},\mathbf{y})\,d\mathbf{y}=
\\
= \intop_{\mathbb{R}^d}(x_{j}-y_{j})a(\mathbf{x} - \mathbf{y})\mu(\mathbf{x},\mathbf{y})\,d\mathbf{y},\ \ \mathbf{x}\in\Omega,\ \ j=1,\dots,d.
\end{multline}
Here we have taken into account periodicity of  $\mu$.
It is easily seen that  $P w_j=0$, i.~e., $w_j = P^\perp w_j$. Hence,
\begin{equation}\label{e2.20a}
 P \partial_{j}\mathbb{A}(\mathbf{0})P= 0,\ \ j=1,\dots,d.
\end{equation}
From  \eqref{e2.20} and \eqref{e2.20a} it follows that
\begin{equation}
\label{e2.22}
P^{\perp}\mathbb{A}(\mathbf{0})^{-1}P^{\perp}\partial_{j} \mathbb{A}(\mathbf{0})P=i(\cdot,\mathbf{1}_{\Omega})v_{j},
\ \ v_{j}=P^{\perp}\mathbb{A}(\mathbf{0})^{-1}P^{\perp}w_{j},\ \ j=1,\dots,d.
\end{equation}
The functions $v_{j}=\overline{v_{j}}\in L_{2}(\Omega)$,
$j=1,\dots,d$, are solutions of the following problems on the cell $\Omega$:
\begin{equation}
\label{e2.22a}
\int\limits_{\Omega}\widetilde
a(\mathbf{0},\mathbf{x}-\mathbf{y})\mu(\mathbf{x},\mathbf{y})(v_{j}(\mathbf{x})-v_{j}(\mathbf{y}))\,d\mathbf{y}=w_{j}(\mathbf{x}),\ \ \mathbf{x} \in\Omega;\quad
\int\limits_{\Omega}v_{j}(\mathbf{x})\,d\mathbf{x} =0.
\end{equation}
Assuming that the functions $v_{j}\in L_{2}(\Omega)$, $j=1,\dots,d$,
are periodically extended to  $\mathbb{R}^d$, we can rewrite the auxiliary problems as follows:
\begin{multline}\label{e2.23}
\intop\limits_{\mathbb{R}^d}
a(\mathbf{x}- \mathbf{y})\mu(\mathbf{x},\mathbf{y})(v_{j}(\mathbf{x})-v_{j}(\mathbf{y}))\,d\mathbf{y} = \intop\limits_{\mathbb{R}^d}
a(\mathbf{x}-\mathbf{y})\mu(\mathbf{x},\mathbf{y})(x_{j}-y_{j}) \,d\mathbf{y},\ \ \mathbf{x}\in\Omega;
\\
 \intop\limits_{\Omega} v_{j}(\mathbf{x})\,d\mathbf{x}=0.
\end{multline}
The problems \eqref{e2.23} are uniquely solvable.
In Section \ref{Sec5} it is shown that  $v_j \in L_\infty(\Omega)$.

From \eqref{e2.22} it follows that
\begin{equation}
\label{e2.23a}
P \partial_{j} \mathbb{A}(\mathbf{0})
P^{\perp}\mathbb{A}(\mathbf{0})^{-1}P^{\perp} =
\left(P^{\perp}\mathbb{A}(\mathbf{0})^{-1}P^{\perp}\partial_{j} \mathbb{A}(\mathbf{0})P \right)^* = - i(\cdot,v_j) \mathbf{1}_{\Omega},
\ \ j=1,\dots,d.
\end{equation}
Finally, as a consequence of relations \eqref{F_j=prop}, \eqref{e2.22},
and \eqref{e2.23a} we have the following statement:
\begin{proposition}
Under the assumptions of Proposition \emph{\ref{prop2.4}} the operators  $F_j$
admit the following representations\emph{:}
\begin{equation}
\label{Fj===}
F_j =  i(\cdot,v_j) \mathbf{1}_{\Omega} - i(\cdot,\mathbf{1}_{\Omega})v_{j},
\ \ j=1,\dots,d,
\end{equation}
where $v_j$ is the periodic solution of problem \eqref{e2.23}.
\end{proposition}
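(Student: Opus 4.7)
The plan is to substitute the two operator identities \eqref{e2.22} and \eqref{e2.23a}, which have already been established just before the proposition, into the representation \eqref{F_j=prop} for $F_j$ from Proposition~\ref{prop2.4}. Each summand in \eqref{F_j=prop} then reduces to an explicit rank-one operator on $L_2(\Omega)$, and their sum is exactly \eqref{Fj===}. In this sense the proof is essentially a bookkeeping step that collects ingredients already in place.

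Concretely, I would first recall from \eqref{e2.20}--\eqref{e2.22} that $\partial_j\mathbb{A}(\mathbf{0})P = i(\cdot,\mathbf{1}_\Omega)w_j$. Composing with $P^\perp\mathbb{A}(\mathbf{0})^{-1}P^\perp$ on the left and using the definition $v_j = P^\perp\mathbb{A}(\mathbf{0})^{-1}P^\perp w_j$ gives
$$
P^\perp\mathbb{A}(\mathbf{0})^{-1}P^\perp\,\partial_j\mathbb{A}(\mathbf{0})\,P = i(\cdot,\mathbf{1}_\Omega)v_j.
$$
Taken with the minus sign carried by the second summand in \eqref{F_j=prop}, this accounts for the term $-i(\cdot,\mathbf{1}_\Omega)v_j$ in \eqref{Fj===}.

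For the remaining summand $-P\,\partial_j\mathbb{A}(\mathbf{0})\,P^\perp\mathbb{A}(\mathbf{0})^{-1}P^\perp$, I would observe that the operator inside is the adjoint of the one handled above. The self-adjointness of $P$, $P^\perp$ and $\mathbb{A}(\mathbf{0})$ is immediate, and the self-adjointness of $\partial_j\mathbb{A}(\mathbf{0})$ can be read directly from \eqref{e2.1}: its integral kernel has the form $i\mu(\mathbf{x},\mathbf{y})\sum_{\mathbf{n}}(x_j-y_j+n_j)a(\mathbf{x}-\mathbf{y}+\mathbf{n})$, which satisfies the Hermitian condition thanks to $a(-\mathbf{z}) = a(\mathbf{z})$ and $\mu(\mathbf{y},\mathbf{x}) = \mu(\mathbf{x},\mathbf{y})$. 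Taking the adjoint of the identity already established, and noting that the scalar $i$ becomes $-i$ under complex conjugation, yields precisely \eqref{e2.23a}; with its minus sign in \eqref{F_j=prop} this contributes $+i(\cdot,v_j)\mathbf{1}_\Omega$. Adding the two contributions gives \eqref{Fj===}.

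I do not foresee any real obstacle. All the analytical content — unique solvability and boundedness of $v_j$ in \eqref{e2.23} (the latter deferred to Section~\ref{Sec5}), the contour-integral derivation of \eqref{F_j=prop}, the identification $\partial_j\mathbb{A}(\mathbf{0})\mathbf{1}_\Omega = iw_j$, and the relation $Pw_j = 0$ (which is why $\partial_j\mathbb{A}(\mathbf{0})$ has no diagonal block in the decomposition $L_2(\Omega) = \mathfrak{N}\oplus\mathfrak{N}^\perp$) — has already been carried out; the proposition itself is the assembly of these pieces into a single formula.
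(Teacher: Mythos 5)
Your proof is correct and follows the same route as the paper: it substitutes \eqref{e2.22} and \eqref{e2.23a} into \eqref{F_j=prop}, obtaining \eqref{e2.23a} from \eqref{e2.22} by taking adjoints. The only small addition on your part is spelling out why $\partial_j\mathbb{A}(\mathbf{0})$ is self-adjoint via the Hermitian property of its kernel (using $a(-\mathbf{z})=a(\mathbf{z})$ and $\mu(\mathbf{x},\mathbf{y})=\mu(\mathbf{y},\mathbf{x})$), a fact the paper uses implicitly; your verification is accurate.
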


Now we proceed to the threshold approximations for the operator  $\mathbb{A}(\boldsymbol{\xi})F(\boldsymbol{\xi})$.

\begin{proposition}
\label{prop2.6}
Suppose that conditions  \eqref{e1.1}--\eqref{e1.3} are satisfied and
$M_3(a) < \infty$.
Then
\begin{equation}
\label{AF = G2}
 \mathbb{A}(\boldsymbol{\xi})F(\boldsymbol{\xi}) =  [G]_2(\boldsymbol{\xi}) + \Psi(\boldsymbol{\xi}), \quad
 [G]_2(\boldsymbol{\xi}) := \frac{1}{2}\sum_{k,l=1}^d G_{kl} \xi_k \xi_l,
\end{equation}
and
\begin{equation}
\label{AF-G}
\left\| \Psi (\boldsymbol{\xi})  \right\| \leqslant C_3(a,\mu) | \boldsymbol{\xi} |^3, \quad  | \boldsymbol{\xi} | \leqslant \delta_0(a,\mu).
\end{equation}
The operators  $G_{kl}$ are given by
\begin{equation}
\label{G_kl=prop}
\begin{split}
G_{kl} =& P \partial_{k} \partial_{l} \mathbb{A}(\mathbf{0}) P
- P \partial_{k}  \mathbb{A}(\mathbf{0}) P^\perp \mathbb{A}(\mathbf{0})^{-1}P^\perp
 \partial_{l}  \mathbb{A}(\mathbf{0}) P
\\
&-  P \partial_{l}  \mathbb{A}(\mathbf{0}) P^\perp \mathbb{A}(\mathbf{0})^{-1}P^\perp   \partial_{k}
 \mathbb{A}(\mathbf{0}) P,\ \ k,l=1,\dots,d.
\end{split}
\end{equation}
The constant $C_3(a,\mu)$ is defined below in  \eqref{C3} and depends only on  $\mu_-$, $\mu_+$,  $\mathcal{C}_\pi(a)$, $M_1(a)$, $M_2(a)$, $M_3(a)$.
\end{proposition}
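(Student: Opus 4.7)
The strategy is to apply the Riesz formula \eqref{e2.6a} and iterate the resolvent identity \eqref{e2.31} enough times to isolate all contributions of orders $|\boldsymbol{\xi}|^0$, $|\boldsymbol{\xi}|^1$, and $|\boldsymbol{\xi}|^2$ explicitly, while controlling the remainder at order $O(|\boldsymbol{\xi}|^3)$. Using the Hadamard expansion \eqref{2.6} in the form $\Delta\mathbb{A}(\boldsymbol{\xi}) = [\Delta_1\mathbb{A}](\boldsymbol{\xi}) + [\Delta_2\mathbb{A}](\boldsymbol{\xi}) + \mathbb{K}_2(\boldsymbol{\xi})$, with $\|\mathbb{K}_2(\boldsymbol{\xi})\|\leqslant \tfrac{1}{6}\mu_+ M_3(a)|\boldsymbol{\xi}|^3$ by \eqref{2.7}, and iterating \eqref{e2.31} three times, one obtains
\begin{multline*}
R(\boldsymbol{\xi},\zeta) = R_0(\zeta) - R_0(\zeta) [\Delta_1\mathbb{A}](\boldsymbol{\xi}) R_0(\zeta) - R_0(\zeta) [\Delta_2\mathbb{A}](\boldsymbol{\xi}) R_0(\zeta) \\
+ R_0(\zeta) [\Delta_1\mathbb{A}](\boldsymbol{\xi}) R_0(\zeta) [\Delta_1\mathbb{A}](\boldsymbol{\xi}) R_0(\zeta) + \mathcal{R}(\boldsymbol{\xi},\zeta),
\end{multline*}
where $\mathcal{R}(\boldsymbol{\xi},\zeta)$ gathers the term $-R_0(\zeta)\mathbb{K}_2(\boldsymbol{\xi}) R_0(\zeta)$, all mixed third-order products containing $[\Delta_2\mathbb{A}](\boldsymbol{\xi})$, the triple product $-R_0([\Delta_1\mathbb{A}] R_0)^3$, and the final tail involving $R(\boldsymbol{\xi},\zeta)$. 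Combining \eqref{e2.3}, \eqref{e2.4}, \eqref{2.7}, and the uniform bound $\|R(\boldsymbol{\xi},\zeta)\|,\|R_0(\zeta)\|\leqslant 6 d_0^{-1}$ on $\Gamma$ from \eqref{e2.32} yields $\|\mathcal{R}(\boldsymbol{\xi},\zeta)\|\leqslant C(a,\mu)|\boldsymbol{\xi}|^3$ uniformly for $\zeta\in\Gamma$ and $|\boldsymbol{\xi}|\leqslant \delta_0(a,\mu)$.

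Next I would insert this expansion into \eqref{e2.6a}. The zeroth-order contribution is $-\tfrac{1}{2\pi i}\ointop_\Gamma \zeta R_0(\zeta)\,d\zeta = \mathbb{A}(\mathbf{0}) P = 0$, since $\mathbf{1}_\Omega \in \operatorname{Ker}\mathbb{A}(\mathbf{0})$. For the remaining contour integrals I would exploit the splitting \eqref{4.10}, namely $R_0(\zeta) = -\zeta^{-1} P + R_0^\perp(\zeta)$ with $R_0^\perp(\zeta) = P^\perp(\mathbb{A}(\mathbf{0})-\zeta I)^{-1} P^\perp$ holomorphic inside $\Gamma$ (the spectrum of $\mathbb{A}(\mathbf{0})\vert_{\mathfrak{N}^\perp}$ lies in $[d_0,\infty)$, outside $\Gamma$). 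Expanding $\zeta R_0 B R_0$ and $\zeta R_0 B R_0 B R_0$ (with $B = [\Delta_j\mathbb{A}](\boldsymbol{\xi})$) as sums of monomials in $\zeta^{-1} P$ and $R_0^\perp(\zeta)$, only the residues at $\zeta = 0$ contribute. The first-order term reduces to $P[\Delta_1\mathbb{A}](\boldsymbol{\xi}) P$, which vanishes by the key identity \eqref{e2.20a} (a consequence of the evenness of $a$). Among the second-order contributions, $-R_0[\Delta_2\mathbb{A}] R_0$ yields $P[\Delta_2\mathbb{A}](\boldsymbol{\xi}) P$; in the expansion of $R_0[\Delta_1\mathbb{A}] R_0[\Delta_1\mathbb{A}] R_0$ every $\zeta^{-1}$-monomial containing a factor $P[\Delta_1\mathbb{A}] P$ again vanishes, and only $-P[\Delta_1\mathbb{A}](\boldsymbol{\xi}) R_0^\perp(0)[\Delta_1\mathbb{A}](\boldsymbol{\xi}) P$ survives, with $R_0^\perp(0) = P^\perp \mathbb{A}(\mathbf{0})^{-1} P^\perp$.

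Substituting $[\Delta_1\mathbb{A}](\boldsymbol{\xi}) = \sum_k \partial_k \mathbb{A}(\mathbf{0})\xi_k$ and $[\Delta_2\mathbb{A}](\boldsymbol{\xi}) = \tfrac{1}{2}\sum_{k,l}\partial_k\partial_l\mathbb{A}(\mathbf{0})\xi_k\xi_l$ from \eqref{2.4} and \eqref{2.6}, and symmetrizing the double sum in $k,l$, I recover $[G]_2(\boldsymbol{\xi})$ with coefficients $G_{kl}$ exactly as in \eqref{G_kl=prop}. The remainder $\Psi(\boldsymbol{\xi}) = -\tfrac{1}{2\pi i}\ointop_\Gamma \zeta \mathcal{R}(\boldsymbol{\xi},\zeta)\,d\zeta$ then obeys \eqref{AF-G} with a constant $C_3(a,\mu)$ depending only on $\mu_\pm$, $d_0(a,\mu)$ (hence on $\mathcal{C}_\pi(a)$ via \eqref{d0}), and $M_1(a)$, $M_2(a)$, $M_3(a)$. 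The main obstacle will be the careful bookkeeping of all $O(|\boldsymbol{\xi}|^3)$ pieces entering $\mathcal{R}$ and the residue computation; the algebraic reason the order-one contribution vanishes, and the order-two one collapses to the stated form, is precisely the identity $P\partial_j\mathbb{A}(\mathbf{0})P = 0$.
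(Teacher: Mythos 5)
Your proposal is correct and follows essentially the same route as the paper: iterate the resolvent identity \eqref{e2.31} to isolate the terms of orders $|\boldsymbol{\xi}|^0$, $|\boldsymbol{\xi}|^1$, $|\boldsymbol{\xi}|^2$, apply the Riesz formula \eqref{e2.6a}, use the splitting \eqref{4.10} to evaluate the residues, and invoke $P\partial_j\mathbb{A}(\mathbf{0})P=0$ to kill the lower-order contributions. The paper organizes the remainder as $Z_3+Z_4$ (equations \eqref{res_iden3}--\eqref{res_est5}) rather than as a single $\mathcal{R}$, but the content is the same.
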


\begin{proof}
Iterating the resolvent identity  \eqref{e2.31} once again, we obtain
\begin{equation}
\label{res_iden3}
\begin{aligned}
&R(\boldsymbol{\xi},\zeta) = R_0(\zeta)
- R_0(\zeta) \Delta\mathbb{A}(\boldsymbol{\xi}) R_0(\zeta) + R_0(\zeta) \Delta\mathbb{A}(\boldsymbol{\xi}) R_0(\zeta) \Delta\mathbb{A}(\boldsymbol{\xi}) R_0(\zeta)
+ Z_3(\boldsymbol{\xi},\zeta),
\\
&Z_3(\boldsymbol{\xi},\zeta) :=
- R(\boldsymbol{\xi},\zeta) \Delta\mathbb{A}(\boldsymbol{\xi}) R_0(\zeta) \Delta\mathbb{A}(\boldsymbol{\xi}) R_0(\zeta) \Delta\mathbb{A}(\boldsymbol{\xi}) R_0(\zeta), \ \;
| \boldsymbol{\xi} |\leqslant \delta_{0}(a,\mu),\ \zeta\in\Gamma.
\end{aligned}
\end{equation}
From \eqref{2.3} and \eqref{e2.32} it follows that
\begin{equation}
\label{res_est4a}
\| Z_3(\boldsymbol{\xi},\zeta) \| \leqslant (6 d_0^{-1})^4 \mu^3_+ M_1(a)^3 |\boldsymbol{\xi}|^3,
\quad
|\boldsymbol{\xi} |\leqslant\delta_{0}(a,\mu),\ \ \zeta\in\Gamma.
\end{equation}
Next, substituting \eqref{2.6} into the second term on the right-hand side of  \eqref{res_iden3}
and   \eqref{2.4} into the third term, we obtain
\begin{equation}
\label{res_iden4}
\begin{aligned}
R(\boldsymbol{\xi},\zeta) =& \,R_0(\zeta)
- R_0(\zeta) \left( [\Delta_1 \mathbb{A}](\boldsymbol{\xi} ) + [\Delta_2 \mathbb{A}](\boldsymbol{\xi} )  \right)  R_0(\zeta)
\\
+& R_0(\zeta)   [\Delta_1 \mathbb{A}](\boldsymbol{\xi} )   R_0(\zeta)
 [\Delta_1 \mathbb{A}](\boldsymbol{\xi} )  R_0(\zeta)
+ Z_3(\boldsymbol{\xi},\zeta) + Z_4(\boldsymbol{\xi},\zeta),
\\
Z_4(\boldsymbol{\xi},\zeta) :=&
- R_0(\zeta) \mathbb{K}_2(\boldsymbol{\xi})  R_0(\zeta) + R_0(\zeta) \mathbb{K}_1(\boldsymbol{\xi})  R_0(\zeta) \Delta\mathbb{A}(\boldsymbol{\xi}) R_0(\zeta)
\\
+& R_0(\zeta) [\Delta_1 \mathbb{A}](\boldsymbol{\xi} )   R_0(\zeta) \mathbb{K}_1(\boldsymbol{\xi})  R_0(\zeta)
, \quad
|\boldsymbol{\xi} | \leqslant\delta_{0}(a,\mu),\ \ \zeta\in\Gamma.
\end{aligned}
\end{equation}
From \eqref{2.3}, \eqref{2.5}, \eqref{2.7}, \eqref{e2.3}, and \eqref{e2.32} it follows that
 \begin{equation}
\label{res_est5}
\| Z_4(\boldsymbol{\xi},\zeta) \| \leqslant \left(6 d_0^{-2} \mu_+ M_3(a)  + (6 d_0^{-1})^3 \mu_+^2 M_1(a)M_2(a) \right)|\boldsymbol{\xi}|^3 ,
\ \;
| \boldsymbol{\xi} |\leqslant\delta_{0}(a,\mu),\  \zeta\in\Gamma.
\end{equation}

Applying the Riesz formula  \eqref{e2.6a} and representation  \eqref{res_iden4}, we obtain
\begin{equation}
\label{AF=}
\mathbb{A}( \boldsymbol{\xi} )F(\boldsymbol{\xi}) = G_0  + \sum_{j=1}^d G_j \xi_j + \frac{1}{2} \sum_{k,l=1}^d G_{kl} \xi_k \xi_l + \Psi(\boldsymbol{\xi}), \quad
| \boldsymbol{\xi} | \leqslant \delta_{0}(a,\mu),
\end{equation}
where
\begin{align}
\label{G0=}
G_0 &= - \frac{1}{2\pi i}\oint_{\Gamma} R_0(\zeta) \zeta  \, d\zeta,
\\
\label{Gj=}
G_j &=  \frac{1}{2\pi i}\oint_{\Gamma} R_0(\zeta)\partial_{j}\mathbb{A}(\mathbf{0}) R_0(\zeta) \zeta  \, d\zeta,
\\
\label{Gkl=}
\begin{split}
G_{kl} &=  \frac{1}{2\pi i}\oint_{\Gamma} R_0(\zeta)\partial_{k} \partial_{l} \mathbb{A}(\mathbf{0})
R_0(\zeta) \zeta  \, d\zeta
\\
&- \frac{1}{2\pi i}\oint_{\Gamma}   R_0(\zeta)\partial_{k} \mathbb{A}(\mathbf{0}) R_0(\zeta) \partial_{l}
\mathbb{A}(\mathbf{0})R_0(\zeta)  \zeta  \, d\zeta
\\
&- \frac{1}{2\pi i}\oint_{\Gamma} R_0(\zeta)\partial_{l} \mathbb{A}(\mathbf{0}) R_0(\zeta) \partial_{k} \mathbb{A}(\mathbf{0})R_0(\zeta)  \zeta  \, d\zeta,
\end{split}
\\
\label{Psi=}
\Psi(\boldsymbol{\xi}) &= - \frac{1}{2\pi i}\oint_{\Gamma} (Z_3(\boldsymbol{\xi},\zeta) + Z_4(\boldsymbol{\xi},\zeta)) \zeta  \, d\zeta.
\end{align}
Relations \eqref{res_est4a} and \eqref{res_est5} yield estimate  \eqref{AF-G} for the operator  \eqref{Psi=}
with the constant
\begin{equation}
\label{C3}
C_{3}(a,\mu):=\frac{(\pi+2)}{2\pi} \left( \frac{6^3  \mu_{+}^3 M_{1}(a)^3}{d_0^2} + \mu_{+} M_{3}(a)
+ \frac{36 \mu_{+}^2 M_{1}(a) M_2(a)}{d_0} \right).
\end{equation}

To calculate the integrals in  \eqref{G0=}--\eqref{Gkl=},
we apply  \eqref{4.10} and take into account that the operator-valued function $R_{0}^{\bot}(\zeta)$ is holomorphic inside the contour  $\Gamma$. We get
\begin{equation}
\label{G0=0}
G_{0}= - \frac{1}{2\pi i}\oint_{\Gamma} \Bigl(-\frac{1}{\zeta}P+R_{0}^{\bot}(\zeta)\Bigr)
\zeta \,d\zeta= 0;
\end{equation}
\begin{multline}
\label{Gj=0}
G_{j}=  \frac{1}{2\pi i}\oint_{\Gamma} \Bigl(-\frac{1}{\zeta}P+R_{0}^{\bot}(\zeta)\Bigr)
\partial_{j}\mathbb{A}(\mathbf{0})
\Bigl(-\frac{1}{\zeta}P+R_{0}^{\bot}(\zeta)\Bigr) \zeta \,d\zeta
\\
=
\frac{1}{2\pi i}\oint_{\Gamma} \frac{1}{\zeta}  P \partial_{j}\mathbb{A}(\mathbf{0}) P  \,d\zeta
=  P \partial_{j}\mathbb{A}(\mathbf{0}) P =0,\ \ j=1,\dots,d.
\end{multline}
We have taken \eqref{e2.20a} into account. Next, we have
\begin{multline*}
\label{Gkl==}
G_{kl}= \frac{1}{2\pi i}\oint_{\Gamma} \Bigl(-\frac{1}{\zeta}P+R_{0}^{\bot}(\zeta)\Bigr)
\partial_{k} \partial_l \mathbb{A}(\mathbf{0})
\Bigl(-\frac{1}{\zeta}P+R_{0}^{\bot}(\zeta)\Bigr) \zeta \,d\zeta
\\
- \frac{1}{2\pi i}\oint_{\Gamma}  \Bigl(-\frac{1}{\zeta}P+R_{0}^{\bot}(\zeta)\Bigr)\partial_{k} \mathbb{A}(\mathbf{0})
\Bigl(-\frac{1}{\zeta}P+R_{0}^{\bot}(\zeta)\Bigr)  \partial_{l} \mathbb{A}(\mathbf{0}) \Bigl(-\frac{1}{\zeta}P+R_{0}^{\bot}(\zeta)\Bigr)
 \zeta  \, d\zeta
 \\
- \frac{1}{2\pi i}\oint_{\Gamma}  \Bigl(-\frac{1}{\zeta}P+R_{0}^{\bot}(\zeta)\Bigr)\partial_{l} \mathbb{A}(\mathbf{0})
\Bigl(-\frac{1}{\zeta}P+R_{0}^{\bot}(\zeta)\Bigr)  \partial_{k} \mathbb{A}(\mathbf{0}) \Bigl(-\frac{1}{\zeta}P+R_{0}^{\bot}(\zeta)\Bigr)
 \zeta  \, d\zeta
 \\
=  P \partial_{k} \partial_{l} \mathbb{A}(\mathbf{0}) P
- P \partial_{k}  \mathbb{A}(\mathbf{0}) R_{0}^{\bot}(0)  \partial_{l}  \mathbb{A}(\mathbf{0}) P
-  P \partial_{l}  \mathbb{A}(\mathbf{0}) R_{0}^{\bot}(0)  \partial_{k}  \mathbb{A}(\mathbf{0}) P,\ \ k,l=1,\dots,d.
\end{multline*}
We have taken into account  that  the operator-valued function $R_{0}^{\bot}(\zeta)$ is holomorphic inside the contour $\Gamma$, and also used  \eqref{e2.20a}. This yields representation  \eqref{G_kl=prop}.

Representation  \eqref{AF = G2} follows from  \eqref{AF=},  \eqref{G0=0}, and \eqref{Gj=0}.
\end{proof}

Let us now decipher representation  \eqref{G_kl=prop} in terms of solutions of auxiliary problems.
From \eqref{e2.20} and \eqref{e2.22} it follows that
\begin{equation}\label{e2.24}
P\partial_{k}\mathbb{A}(\mathbf{0})P^{\perp}\mathbb{A} (\mathbf{0})^{-1}P^{\perp}\partial_{l}\mathbb{A}(\mathbf{0})P=(v_{l},w_{k})P,\
\ k,l=1,\dots,d,
\end{equation}
where the functions $w_{j}\in L_{2}(\Omega)$, $j=1,\dots,d$, are defined by  \eqref{e2.21} and the functions  $v_{j}\in L_{2}(\Omega)$,
$j=1,\dots,d$, satisfy the auxiliary problems  (\ref{e2.23}).
Since $P = (\cdot, \mathbf{1}_\Omega) \mathbf{1}_\Omega$, we have
\begin{equation}
\label{e2.25}
P\partial_{k}\partial_{l}\mathbb{A} (\mathbf{0})P=(w_{kl},\mathbf{1}_{\Omega})P,\
\ k,l=1,\dots,d.
\end{equation}
Here
\begin{equation}
\label{e2.54a}
w_{kl} = \overline{w_{kl}}=\partial_{k}\partial_{l}\mathbb{A}(\mathbf{0})\mathbf{1}_{\Omega}\in L_{2}(\Omega),
\end{equation}
 i.\,e.,
\begin{multline}\label{e2.26}
w_{kl}(\mathbf{x})=\intop_{\Omega}\sum_{\mathbf{n}\in \mathbb{Z}^d}(x_{k}-y_{k}+n_{k})(x_{l}-y_{l}+n_{l})a(\mathbf{x} - \mathbf{y}+\mathbf{n})
\mu(\mathbf{x}, \mathbf{y} )\,d\mathbf{y} 
\\
= \intop_{\mathbb{R}^d}(x_{k}-y_{k})(x_{l}-y_{l})a( \mathbf{x} - \mathbf{y})\mu( \mathbf{x}, \mathbf{y})\, d\mathbf{y},\ \
 \mathbf{x} \in\Omega,\ \ k,l=1,\dots,d.
\end{multline}
Thus, relations  \eqref{G_kl=prop} and \eqref{e2.24}--\eqref{e2.26} imply the folllowing statement.

\begin{proposition}
Under the assumptions of Proposition \emph{\ref{prop2.6}} the operators  $G_{kl}$
admit the representation
\begin{equation*}
G_{kl} = g_{kl} P,
\ \ k,l=1,\dots,d,
\end{equation*}
where
\begin{multline}
\label{Gkl===}
g_{kl} = (w_{kl},\mathbf{1}_{\Omega})-(v_{k},w_{l})-(v_{l},w_{k})
\\
= \intop_{\Omega}d\mathbf{x}\intop_{\mathbb{R}^d} d \mathbf{y}((x_{k}-y_{k})(x_{l}-y_{l})-v_{k}(\mathbf{x})(x_{l}-y_{l})-v_{l}(\mathbf{x})(x_{k}-y_{k}))a(\mathbf{x} -\mathbf{y})\mu(\mathbf{x},\mathbf{y}),
\end{multline}
and $v_j$ is the periodic solution of problem \eqref{e2.23}.
Thus,
\begin{equation}
\label{Gkl====}
 [G]_2(\boldsymbol{\xi}) = \frac{1}{2}\sum_{k,l=1}^d G_{kl} \xi_k \xi_l =
 \frac{1}{2}\sum_{k,l=1}^d g_{kl} \xi_k \xi_l P =
 \langle g^0 \boldsymbol{\xi}, \boldsymbol{\xi} \rangle P,\quad  \boldsymbol{\xi}   \in \mathbb{R}^d,
 \end{equation}
 where $g^0$ is the  $(d \times d)$-matrix with the entries $\frac{1}{2} g_{kl}$, $k,l=1, \dots,d$.
\end{proposition}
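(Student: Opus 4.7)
The plan is to read \eqref{G_kl=prop} as a sum of three operators, simplify each one using the auxiliary identities already collected in \eqref{e2.20}--\eqref{e2.26}, and then translate the scalar coefficients into integrals. First I would treat the first summand $P\,\partial_k\partial_l\mathbb{A}(\mathbf{0})\,P$: by \eqref{e2.54a} and the one-dimensionality of $\mathfrak{N}$, it equals $(\partial_k\partial_l\mathbb{A}(\mathbf{0})\mathbf{1}_\Omega,\mathbf{1}_\Omega)P=(w_{kl},\mathbf{1}_\Omega)P$, which is exactly \eqref{e2.25}. For the second summand $P\,\partial_k\mathbb{A}(\mathbf{0})\,P^\perp\mathbb{A}(\mathbf{0})^{-1}P^\perp\,\partial_l\mathbb{A}(\mathbf{0})\,P$ I would invoke \eqref{e2.22} to replace the tail $P^\perp\mathbb{A}(\mathbf{0})^{-1}P^\perp\,\partial_l\mathbb{A}(\mathbf{0})P$ by $i(\cdot,\mathbf{1}_\Omega)v_l$, and then use selfadjointness of $\partial_k\mathbb{A}(\mathbf{0})$ together with \eqref{e2.20} to compute $(\partial_k\mathbb{A}(\mathbf{0})v_l,\mathbf{1}_\Omega)=(v_l,iw_k)=-i(v_l,w_k)$, so that the $i$'s cancel and we get $(v_l,w_k)P$; this is precisely \eqref{e2.24}. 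The third summand is handled by the same argument with $k,l$ exchanged, giving $(v_k,w_l)P$. Adding the three contributions yields $G_{kl}=g_{kl}P$ with $g_{kl}=(w_{kl},\mathbf{1}_\Omega)-(v_k,w_l)-(v_l,w_k)$.

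Next I would unfold the inner products into the integral form of \eqref{Gkl===}. Formula \eqref{e2.26} already writes $w_{kl}(\mathbf{x})$ as an integral over $\mathbb{R}^d$ of $(x_k-y_k)(x_l-y_l)a(\mathbf{x}-\mathbf{y})\mu(\mathbf{x},\mathbf{y})$, so integrating in $\mathbf{x}$ over $\Omega$ produces the first term of \eqref{Gkl===}. Since $v_j$ is real-valued, \eqref{e2.21} gives $(v_k,w_l)=\int_\Omega v_k(\mathbf{x})w_l(\mathbf{x})\,d\mathbf{x}=\int_\Omega d\mathbf{x}\int_{\mathbb{R}^d}d\mathbf{y}\,v_k(\mathbf{x})(x_l-y_l)a(\mathbf{x}-\mathbf{y})\mu(\mathbf{x},\mathbf{y})$, and likewise for $(v_l,w_k)$; these reproduce the remaining two terms of \eqref{Gkl===}.

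Finally, to recover \eqref{Gkl====}, I would substitute $G_{kl}=g_{kl}P$ into the definition of $[G]_2(\boldsymbol{\xi})$ in \eqref{AF = G2}, pull the orthogonal projection $P$ out of the quadratic polynomial in $\boldsymbol{\xi}$, and read off the bilinear form, defining $g^0$ as the $(d\times d)$-matrix with entries $\tfrac12 g_{kl}$.

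I do not expect any genuine obstacle: the statement is a bookkeeping exercise that reinterprets the abstract threshold expansion of Proposition \ref{prop2.6} in terms of the concrete cell-problem solutions $v_j$ from \eqref{e2.23}. The only point demanding a little care is the sign-tracking in the second step, where the factors of $i$ coming from \eqref{e2.20} and \eqref{e2.22} must cancel against the selfadjointness relation $(\partial_k\mathbb{A}(\mathbf{0}))^*=\partial_k\mathbb{A}(\mathbf{0})$; once that is secured, all three summands of \eqref{G_kl=prop} become scalar multiples of $P$ and the integral identity \eqref{Gkl===} follows directly from the explicit formulas \eqref{e2.21} and \eqref{e2.26}.
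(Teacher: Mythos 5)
Your proof is correct and follows essentially the same route as the paper: it re-expresses the abstract formula \eqref{G_kl=prop} via the auxiliary identities \eqref{e2.20}--\eqref{e2.26}, which is exactly what the authors do. The only place you add detail beyond the paper's terse statement is in re-deriving \eqref{e2.24} via selfadjointness of $\partial_k\mathbb{A}(\mathbf{0})$, and that computation (including the $i$-cancellation and the reality of $v_j,w_j$) is accurate.
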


The matrix  $g^0$ is called the  \emph{effective matrix}; below in Section \ref{sec3.1} we will show that $g^0$ is positive definite.

\begin{proposition}
\label{prop2.8}
Assume that conditions  \eqref{e1.1}--\eqref{e1.3} hold and $M_4(a) < \infty$.
Then
\begin{equation}
\label{AF= G + G3}
 \mathbb{A}( \boldsymbol{\xi} )F( \boldsymbol{\xi} ) = [G]_2(\boldsymbol{\xi})  + [G]_3(\boldsymbol{\xi})
 + \Upsilon(\boldsymbol{\xi})
\end{equation}
and
\begin{equation}
\label{AF-G - G3}
\left\| \Upsilon (\boldsymbol{\xi})   \right\| \leqslant C_4(a,\mu) |\boldsymbol{\xi} |^4, \quad  |\boldsymbol{\xi}| \leqslant \delta_0(a,\mu).
\end{equation}
Here $[G]_2(\boldsymbol{\xi})$ is defined by \eqref{AF = G2}, \eqref{G_kl=prop}, and $[G]_3(\boldsymbol{\xi})$ is given by
\begin{equation}
\label{G_jkl=prop}
\begin{split}
 &[G]_3(\boldsymbol{\xi}) := \frac{1}{6}\sum_{j,k,l=1}^d G_{jkl} \xi_j \xi_k \xi_l
 \\
 =&\,
  P  [\Delta_3 \mathbb{A}](\boldsymbol{\xi}) P - P [\Delta_1 \mathbb{A}](\boldsymbol{\xi})  R_0^\perp(0)
   [\Delta_2\mathbb{A}](\boldsymbol{\xi}) P
  \\
  -& P[\Delta_2 \mathbb{A}](\boldsymbol{\xi}) R_0^\perp(0)  [\Delta_1 \mathbb{A}](\boldsymbol{\xi})P  +  P [\Delta_1\mathbb{A}](\boldsymbol{\xi}) R_0^\perp(0)   [\Delta_1 \mathbb{A}]( \boldsymbol{\xi} ) R_0^\perp(0)  [\Delta_1\mathbb{A}](\boldsymbol{\xi}) P
\\
   -& R_0^\perp(0) [\Delta_1\mathbb{A}]( \boldsymbol{\xi} )  P  [\Delta_2\mathbb{A}]( \boldsymbol{\xi} ) P
   -  P  [\Delta_2 \mathbb{A} ](\boldsymbol{\xi} ) P  [\Delta_1 \mathbb{A}](\boldsymbol{\xi})  R_0^\perp(0)
  \\
 +&    R_0^\perp(0)   [\Delta_1 \mathbb{A} ](\boldsymbol{\xi} ) P  [\Delta_1 \mathbb{A}]( \boldsymbol{\xi} ) R_0^\perp(0)  [\Delta_1\mathbb{A}](\boldsymbol{\xi} ) P
  \\
  +&  P [\Delta_1 \mathbb{A}]( \boldsymbol{\xi}) R_0^\perp(0)   [\Delta_1 \mathbb{A} ](\boldsymbol{\xi}) P
   [\Delta_1 \mathbb{A} ](\boldsymbol{\xi}) R_0^\perp(0).
  \end{split}
\end{equation}
The constant $C_4(a,\mu)$ is defined below in  \eqref{C4} and depends only on $\mu_-$, $\mu_+$,  $\mathcal{C}_\pi(a)$, $M_1(a)$, $M_2(a)$, $M_3(a)$, $M_4(a)$.
\end{proposition}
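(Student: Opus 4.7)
The plan is to continue the iterated resolvent expansion used in the proof of Proposition \ref{prop2.6} by exactly one more step, substitute Taylor-type expansions of $\mathbb{A}(\boldsymbol{\xi})$ up to order $4$, and then apply the Riesz formula \eqref{e2.6a} together with the decomposition $R_0(\zeta) = -\zeta^{-1} P + R_0^\perp(\zeta)$, where $R_0^\perp(\zeta) = P^\perp(\mathbb{A}(\mathbf{0}) - \zeta I)^{-1} P^\perp$ is holomorphic inside $\Gamma$. Iterating \eqref{e2.31} four times yields
$$R(\boldsymbol{\xi},\zeta) = \sum_{j=0}^{3} (-1)^j R_0(\zeta)\bigl(\Delta\mathbb{A}(\boldsymbol{\xi}) R_0(\zeta)\bigr)^j + R(\boldsymbol{\xi},\zeta)\bigl(\Delta\mathbb{A}(\boldsymbol{\xi}) R_0(\zeta)\bigr)^4,$$
and by \eqref{2.3} and \eqref{e2.32} the remainder is bounded in norm by $(6 d_0^{-1})^5 \mu_+^4 M_1(a)^4 |\boldsymbol{\xi}|^4$, uniformly in $\zeta \in \Gamma$, which contributes to $\Upsilon(\boldsymbol{\xi})$.

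Next I would substitute the full expansion $\Delta\mathbb{A}(\boldsymbol{\xi}) = [\Delta_1\mathbb{A}](\boldsymbol{\xi}) + [\Delta_2\mathbb{A}](\boldsymbol{\xi}) + [\Delta_3\mathbb{A}](\boldsymbol{\xi}) + \mathbb{K}_3(\boldsymbol{\xi})$ into every occurrence in the first four terms. Using the Schur-type bounds \eqref{2.5}, \eqref{2.7}, \eqref{2.9}, \eqref{e2.3}, \eqref{e2.4} and the resolvent bound \eqref{e2.32}, every summand of total order at least $4$ in $\boldsymbol{\xi}$ fits into the $O(|\boldsymbol{\xi}|^4)$ remainder $\Upsilon(\boldsymbol{\xi})$, and the constant $C_4(a,\mu)$ is assembled additively from these Schur bounds. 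The surviving degree-$3$ part consists of one term with $[\Delta_3\mathbb{A}]$, two terms of type $[\Delta_1\mathbb{A}]\cdot[\Delta_2\mathbb{A}]$ (in both orders), and one triple $[\Delta_1\mathbb{A}]^3$, each sandwiched with factors $R_0(\zeta)$, together with an overall factor $\zeta$ coming from the Riesz formula \eqref{e2.6a}.

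Then I would compute each of the resulting contour integrals by expanding $R_0(\zeta) = -\zeta^{-1} P + R_0^\perp(\zeta)$ inside every factor. Since $R_0^\perp$ is holomorphic inside $\Gamma$, only those sub-products contribute whose number of $\zeta^{-1}P$-factors exceeds by exactly one the combined polynomial weight $\zeta$ from the Riesz formula; residue calculus then replaces these products by the value of the holomorphic complement at $\zeta = 0$, exactly as in \eqref{G0=0}, \eqref{Gj=0}, and the computation of $G_{kl}$. Applying the identity $P[\Delta_1\mathbb{A}](\boldsymbol{\xi})P = 0$, which follows from \eqref{e2.20a}, kills every sub-product in which two consecutive $\zeta^{-1}P$ factors sandwich a $[\Delta_1\mathbb{A}]$. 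What remains are exactly the eight terms in \eqref{G_jkl=prop}: a pure $P[\Delta_3\mathbb{A}]P$ contribution, two symmetric terms $-P[\Delta_1\mathbb{A}]R_0^\perp(0)[\Delta_2\mathbb{A}]P$ and $-P[\Delta_2\mathbb{A}]R_0^\perp(0)[\Delta_1\mathbb{A}]P$, the triple $+P[\Delta_1\mathbb{A}]R_0^\perp(0)[\Delta_1\mathbb{A}]R_0^\perp(0)[\Delta_1\mathbb{A}]P$, and four mixed terms in which a lateral $R_0^\perp(0)$ sits outside a non-vanishing sandwich $P[\Delta_2\mathbb{A}]P$ or $P[\Delta_1\mathbb{A}]R_0^\perp(0)[\Delta_1\mathbb{A}]P$.

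The main obstacle is purely combinatorial: the number of sub-products to examine after the substitution $R_0 = -\zeta^{-1}P + R_0^\perp$ is of the order of $2^3 + 2^4 + 2^5$, and one must track both the sign picked up from the alternating sum and the pole order coming from the $\zeta^{-1}P$-factors versus the outer factor $\zeta$. The genuine analytic input is already established, namely the smoothness expansions \eqref{2.6}, \eqref{2.8} with their Schur bounds \eqref{2.7}, \eqref{2.9}, the uniform resolvent estimate \eqref{e2.32}, and the vanishing \eqref{e2.20a}; the rest is a careful accounting exercise that mirrors (one step deeper) the computation of $G_0$, $G_j$, $G_{kl}$ in Proposition \ref{prop2.6}.
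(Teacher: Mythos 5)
Your proposal is correct and follows essentially the same strategy as the paper's proof: iterate the resolvent identity to fourth order to obtain the remainder $Z_5$ with the bound $(6d_0^{-1})^5\mu_+^4 M_1(a)^4|\boldsymbol{\xi}|^4$, substitute Taylor expansions of $\Delta\mathbb{A}(\boldsymbol{\xi})$, collect the $O(|\boldsymbol{\xi}|^4)$ terms into $\Upsilon(\boldsymbol{\xi})$, and compute the surviving degree-three contour integrals via $R_0(\zeta)=-\zeta^{-1}P+R_0^\perp(\zeta)$, residue calculus, and the vanishing $P[\Delta_1\mathbb{A}](\boldsymbol{\xi})P=0$. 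The only cosmetic difference is that you substitute the full order-three expansion into every slot and then discard all summands of degree $\geqslant 4$, whereas the paper substitutes expansions of decreasing order (\eqref{2.8} into the linear block, \eqref{2.6} into the quadratic block, \eqref{2.4} into the cubic block) to minimize the bookkeeping; both lead to the same leading terms and the same structure for $Z_6$ and hence for $C_4(a,\mu)$.
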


\begin{proof}
Iterating the resolvent identity  \eqref{e2.31} once again, we obtain
\begin{equation}
\label{res_iden10}
\begin{aligned}
R(\boldsymbol{\xi},\zeta) =& \,R_0(\zeta)
- R_0(\zeta) \Delta \mathbb{A}(\boldsymbol{\xi}) R_0(\zeta) + R_0(\zeta) \Delta \mathbb{A}(\boldsymbol{\xi}) R_0(\zeta) \Delta\mathbb{A}(\boldsymbol{\xi}) R_0(\zeta)
\\
&- R_0(\zeta) \Delta\mathbb{A}(\boldsymbol{\xi}) R_0(\zeta) \Delta\mathbb{A}(\boldsymbol{\xi} ) R_0(\zeta) \Delta\mathbb{A}(\boldsymbol{\xi}) R_0(\zeta)
+ Z_5(\boldsymbol{\xi},\zeta),
\\
Z_5(\boldsymbol{\xi},\zeta) :=&\,
 R(\boldsymbol{\xi},\zeta) \Delta\mathbb{A}(\boldsymbol{\xi}) R_0(\zeta) \Delta\mathbb{A}(\boldsymbol{\xi}) R_0(\zeta) \Delta \mathbb{A}(\boldsymbol{\xi}) R_0(\zeta) \Delta\mathbb{A} ( \boldsymbol{\xi} ) R_0(\zeta),
\end{aligned}
\end{equation}
for $| \boldsymbol{\xi} |\leqslant \delta_{0}(a,\mu)$ and $\zeta\in\Gamma$.
From  \eqref{2.3} and \eqref{e2.32} it follows that
\begin{equation}
\label{res_est10}
\| Z_5( \boldsymbol{\xi},\zeta) \| \leqslant (6 d_0^{-1})^5 \mu^4_+ M_1(a)^4 | \boldsymbol{\xi}|^4,
\quad
| \boldsymbol{\xi} | \leqslant \delta_{0}(a,\mu),\ \ \zeta\in\Gamma.
\end{equation}

Substituting  \eqref{2.8} into the second term on the right-hand side of  \eqref{res_iden10},
\eqref{2.6} into the third term, and   \eqref{2.4} into the fourth term, we obtain
\begin{equation}
\label{res_iden10a}
\begin{aligned}
R(  \boldsymbol{\xi},\zeta) =& \,R_0(\zeta)
- R_0(\zeta)  \left( [\Delta_1 \mathbb{A}](  \boldsymbol{\xi} ) +  [\Delta_2 \mathbb{A}](  \boldsymbol{\xi}) +
 [\Delta_3 \mathbb{A}]( \boldsymbol{\xi}) \right) R_0(\zeta)
\\
+& R_0(\zeta) [\Delta_1\mathbb{A}]( \boldsymbol{\xi})  R_0(\zeta) [\Delta_1 \mathbb{A}]( \boldsymbol{\xi})  R_0(\zeta)
\\
+& R_0(\zeta) [\Delta_1 \mathbb{A}](\boldsymbol{\xi})   R_0(\zeta) [\Delta_2 \mathbb{A}](\boldsymbol{\xi})   R_0(\zeta)
\\
+& R_0(\zeta) [\Delta_2 \mathbb{A}](\boldsymbol{\xi} )   R_0(\zeta) [\Delta_1 \mathbb{A}](\boldsymbol{\xi})  R_0(\zeta)
\\
-& R_0(\zeta) [\Delta_1 \mathbb{A}]( \boldsymbol{\xi} )  R_0(\zeta) [\Delta_1 \mathbb{A}](\boldsymbol{\xi})  R_0(\zeta) [\Delta_1 \mathbb{A}](\boldsymbol{\xi})  R_0(\zeta)
\\
+& Z_5(\boldsymbol{\xi},\zeta) + Z_6(\boldsymbol{\xi},\zeta), \quad
| \boldsymbol{\xi} | \leqslant \delta_{0}(a,\mu),\ \ \zeta\in\Gamma.
\end{aligned}
\end{equation}
where
\begin{equation*}
\label{res_iden11}
\begin{aligned}
Z_6(\boldsymbol{\xi},\zeta) :=&
- R_0(\zeta) \mathbb{K}_3(\boldsymbol{\xi})  R_0(\zeta) + R_0(\zeta) \mathbb{K}_2(\boldsymbol{\xi})  R_0(\zeta) \Delta\mathbb{A}(\boldsymbol{\xi}) R_0(\zeta)
\\
&+ R_0(\zeta)  [\Delta_2 \mathbb{A}](\boldsymbol{\xi})  R_0(\zeta)  \mathbb{K}_1(\boldsymbol{\xi})  R_0(\zeta)
\\
&+ R_0(\zeta) [\Delta_1 \mathbb{A}](\boldsymbol{\xi})  R_0(\zeta) \mathbb{K}_2(\boldsymbol{\xi})  R_0(\zeta)
\\
&-  R_0(\zeta) \Delta \mathbb{A}(\boldsymbol{\xi})  R_0(\zeta) \Delta \mathbb{A}(\boldsymbol{\xi})   R_0(\zeta) \mathbb{K}_1(\boldsymbol{\xi})  R_0(\zeta)
\\
& -  R_0(\zeta) \Delta \mathbb{A}(\boldsymbol{\xi})  R_0(\zeta) \mathbb{K}_1 (\boldsymbol{\xi})  R_0(\zeta)   [\Delta_1 \mathbb{A}](\boldsymbol{\xi}) R_0(\zeta)
 \\
 & -  R_0(\zeta) \mathbb{K}_1 (\boldsymbol{\xi})  R_0(\zeta)   [\Delta_1 \mathbb{A}](\boldsymbol{\xi}) R_0(\zeta)
  [\Delta_1 \mathbb{A}](\boldsymbol{\xi}) R_0(\zeta).
\end{aligned}
\end{equation*}

Relations \eqref{2.3}, \eqref{2.5}, \eqref{2.7}, \eqref{2.9}--\eqref{e2.4}, and \eqref{e2.32} imply that
 \begin{multline}
\label{res_est12}
\| Z_6(\boldsymbol{\xi},\zeta) \| \leqslant \Bigl(\frac{3}{2} d_0^{-2} \mu_+ M_4(a)  +  d_0^{-3} \mu_+^2 (72 M_1(a)M_3(a) \!+\!
54  M_2(a)^2)
\\
+\frac{3}{2} \cdot 6^4 d_0^{-4}\mu_+^3 M_1(a)^2 M_2(a)\Bigr)|\boldsymbol{\xi}|^4,
\ \
| \boldsymbol{\xi} | \leqslant \delta_{0}(a,\mu),\ \ \zeta\in\Gamma.
\end{multline}

Applying the Riesz formula  \eqref{e2.6a} and representation  \eqref{res_iden10a}, we obtain
\begin{equation*}
\label{AF=corrector}
 \mathbb{A} (\boldsymbol{\xi})F(\boldsymbol{\xi}) = G_0  + \sum_{j=1}^d G_j \xi_j + \frac{1}{2} \sum_{k,l=1}^d G_{kl} \xi_k \xi_l +
\frac{1}{6} \sum_{j,k,l=1}^d G_{jkl} \xi_j \xi_k \xi_l +
 \Upsilon(\boldsymbol{\xi})
 \end{equation*}
for $| \boldsymbol{\xi}| \leqslant \delta_{0}(a,\mu)$, where the operators
$G_0$, $G_j$, $G_{kl}$ are defined by  \eqref{G0=}--\eqref{Gkl=},
and the fourth and fifth terms on the right are given by
\begin{equation}
\label{Gjkl=1}
\begin{aligned}
&[G]_3(\boldsymbol{\xi}) = \frac{1}{6} \sum_{j,k,l=1}^d G_{jkl} \xi_j \xi_k \xi_l =
 \frac{1}{2\pi i}\oint_{\Gamma} R_0(\zeta)  [\Delta_3\mathbb{A}](\boldsymbol{\xi})  R_0(\zeta)  \zeta  \, d\zeta
\\
&- \frac{1}{2\pi i}\oint_{\Gamma} R_0(\zeta)  [\Delta_1 \mathbb{A}](\boldsymbol{\xi})  R_0(\zeta)   [\Delta_2\mathbb{A}](\boldsymbol{\xi})  R_0(\zeta)
 \zeta  \, d\zeta
 \\
 &- \frac{1}{2\pi i}\oint_{\Gamma} R_0(\zeta)  [\Delta_2 \mathbb{A}]( \boldsymbol{\xi} )  R_0(\zeta)   [\Delta_1 \mathbb{A}](\boldsymbol{\xi} )  R_0(\zeta)
 \zeta  \, d\zeta
 \\
 &+   \frac{1}{2\pi i}\oint_{\Gamma} R_0(\zeta)  [\Delta_1 \mathbb{A} ]( \boldsymbol{\xi})  R_0(\zeta)   [\Delta_1 \mathbb{A}](\boldsymbol{\xi})  R_0(\zeta)
  [\Delta_1 \mathbb{A} ](\boldsymbol{\xi})  R_0(\zeta) \zeta  \, d\zeta
\end{aligned}
\end{equation}
and
\begin{equation}
\label{Upsilon=1}
\Upsilon(\boldsymbol{\xi}) = -  \frac{1}{2\pi i}\oint_{\Gamma}  (Z_5(\boldsymbol{\xi},\zeta) + Z_6( \boldsymbol{\xi},\zeta)) \zeta  \, d\zeta.
\end{equation}
Relations \eqref{res_est10} and \eqref{res_est12} imply estimate \eqref{AF-G - G3} for the operator  \eqref{Upsilon=1}
with the constant
\begin{multline}
\label{C4}
C_{4}(a,\mu):= \frac{(\pi+2)}{2\pi} \Bigl( \frac{6^4 \mu_{+}^4 M_{1}(a)^4}{d_{0}^{3}} +
\frac{\mu_{+} M_{4}(a)}{4}
\\
+ \frac{\mu_{+}^2 ( 12 M_{1}(a) M_3(a) + 9  M_{2}(a)^2)}{d_0}
+ \frac{6^4 \mu_{+}^3 M_{1}(a)^2 M_2(a)}{4 d_0^2} \Bigr).
\end{multline}

The operators $G_0$, $G_j$, $G_{kl}$ have already been determined: $G_0=0$, $G_j=0$, $j=1,\dots,d$, and $G_{kl}$
are given by  \eqref{G_kl=prop}. To calculate the integrals in  \eqref{Gjkl=1}, substitute  \eqref{4.10}  and  take into account that  the operator-valued function $R_{0}^{\bot}(\zeta)$ is holomorphic inside the contour  $\Gamma$.
For the first term on the right, we get
$$
 \frac{1}{2\pi i}\oint_{\Gamma}  R_0(\zeta)  [\Delta_3 \mathbb{A}](\boldsymbol{\xi})   R_0(\zeta) \zeta  \, d\zeta =
 P  [\Delta_3\mathbb{A}](\boldsymbol{\xi})  P.
$$
Using  \eqref{e2.20a}, we calculate the second term:
$$
\begin{aligned}
-& \frac{1}{2\pi i}\oint_{\Gamma}  R_0(\zeta)  [\Delta_1 \mathbb{A}](\boldsymbol{\xi})  R_0(\zeta)
 [\Delta_2 \mathbb{A}](\boldsymbol{\xi}) R_0(\zeta) \zeta  \, d\zeta
 \\
 &=  - R_0^\perp(0) [\Delta_1\mathbb{A}](\boldsymbol{\xi})  P  [\Delta_2 \mathbb{A} ]( \boldsymbol{\xi}) P
-  P  [\Delta_1 \mathbb{A}](\boldsymbol{\xi})  R_0^\perp(0)  [\Delta_2 \mathbb{A}](\boldsymbol{\xi} )  P.
\end{aligned}
$$
Similarly, the third term is represented as
$$
\begin{aligned}
-& \frac{1}{2\pi i}\oint_{\Gamma} R_0(\zeta)  [\Delta_2 \mathbb{A}](\boldsymbol{\xi})  R_0(\zeta)
[\Delta_1 \mathbb{A}](\boldsymbol{\xi})  R_0(\zeta)
 \zeta  \, d\zeta
 \\
 & = - P [\Delta_2\mathbb{A}](\boldsymbol{\xi}) R_0^\perp(0)  [\Delta_1 \mathbb{A}](\boldsymbol{\xi}) P
-  P  [\Delta_2\mathbb{A}](\boldsymbol{\xi}) P  [\Delta_1 \mathbb{A}](\boldsymbol{\xi})  R_0^\perp(0).
\end{aligned}
$$
Finally, the fourth term takes the form
$$
\begin{aligned}
  \frac{1}{2\pi i}\oint_{\Gamma} R_0(\zeta)  [\Delta_1 \mathbb{A}](\boldsymbol{\xi})  R_0(\zeta)   [\Delta_1 \mathbb{A}](\boldsymbol{\xi})  R_0(\zeta)
  [\Delta_1 \mathbb{A}](\boldsymbol{\xi} )  R_0(\zeta) \zeta  \, d\zeta
  \\
  = R_0^\perp(0)   [\Delta_1 \mathbb{A}](\boldsymbol{\xi}) P  [\Delta_1\mathbb{A}](\boldsymbol{\xi}) R_0^\perp(0)  [\Delta_1\mathbb{A}](\boldsymbol{\xi}) P
  \\
  +  P [\Delta_1 \mathbb{A}] (\boldsymbol{\xi}) R_0^\perp(0)   [\Delta_1 \mathbb{A} ](\boldsymbol{\xi}) P  [\Delta_1 \mathbb{A}](\boldsymbol{\xi}) R_0^\perp(0)
\\
  +  P [\Delta_1\mathbb{A}](\boldsymbol{\xi}) R_0^\perp(0)   [\Delta_1 \mathbb{A}](\boldsymbol{\xi}) R_0^\perp(0)
  [\Delta_1 \mathbb{A}](\boldsymbol{\xi}) P.
  \end{aligned}
$$
As a result, we arrive at  \eqref{G_jkl=prop}.
\end{proof}

\begin{proposition}
\label{prop2.9}
Under the assumptions of Proposition  \emph{\ref{prop2.8}} we have
 \begin{equation}
\label{PG3P}
P  [G]_3(\boldsymbol{\xi} ) P =0.
\end{equation}
\end{proposition}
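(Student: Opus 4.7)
The plan is to exploit the complex-conjugation (time-reversal) symmetry of the family $\mathbb{A}(\boldsymbol{\xi})$ to show that the scalar quantity $\alpha(\boldsymbol{\xi}):=(\mathbb{A}(\boldsymbol{\xi})F(\boldsymbol{\xi})\mathbf{1}_\Omega,\mathbf{1}_\Omega)$ is an \emph{even} function of $\boldsymbol{\xi}$, and then to read off $P[G]_3(\boldsymbol{\xi})P=0$ from the expansion \eqref{AF= G + G3} by a homogeneity argument.

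First I would let $J\colon u\mapsto \overline u$ denote the antiunitary involution on $L_2(\Omega)$. Using that $a$ and $\mu$ are real-valued together with the identity $\overline{\widetilde a(\boldsymbol{\xi},\mathbf{z})}=\widetilde a(-\boldsymbol{\xi},\mathbf{z})$ (immediate from \eqref{a_tilde}), formulas \eqref{A_xi}--\eqref{B_xi} give $J\mathbb{A}(\boldsymbol{\xi})J=\mathbb{A}(-\boldsymbol{\xi})$. By the spectral theorem this propagates to the spectral projections, $JF(\boldsymbol{\xi})J=F(-\boldsymbol{\xi})$; and $J\mathbf{1}_\Omega=\mathbf{1}_\Omega$ gives $JPJ=P$. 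Since $P$ has rank one, $PAP=(A\mathbf{1}_\Omega,\mathbf{1}_\Omega)P$ for every bounded $A$; applied to the selfadjoint operator $A=\mathbb{A}(\boldsymbol{\xi})F(\boldsymbol{\xi})$ (the factors commute because $F(\boldsymbol{\xi})$ is a spectral projection of $\mathbb{A}(\boldsymbol{\xi})$) this gives $P\mathbb{A}(\boldsymbol{\xi})F(\boldsymbol{\xi})P=\alpha(\boldsymbol{\xi})P$ with $\alpha(\boldsymbol{\xi})\in\mathbb{R}$. Conjugating this identity with $J$ and using the three transformation rules above yields $\alpha(-\boldsymbol{\xi})=\overline{\alpha(\boldsymbol{\xi})}=\alpha(\boldsymbol{\xi})$, so $P\mathbb{A}(\boldsymbol{\xi})F(\boldsymbol{\xi})P$ is an even function of $\boldsymbol{\xi}$ on the ball $|\boldsymbol{\xi}|\le\delta_0(a,\mu)$.

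To conclude, I would sandwich \eqref{AF= G + G3} between $P$'s on both sides and take the difference of the resulting identity evaluated at $\boldsymbol{\xi}$ and at $-\boldsymbol{\xi}$. The even quadratic contribution $P[G]_2(\boldsymbol{\xi})P$ cancels, the cubic term doubles, and \eqref{AF-G - G3} controls the remainder; this yields
\[
\|P[G]_3(\boldsymbol{\xi})P\|\le C_4(a,\mu)\,|\boldsymbol{\xi}|^4,\qquad |\boldsymbol{\xi}|\le\delta_0(a,\mu).
\]
Since $P[G]_3(\cdot)P$ is homogeneous of degree three in $\boldsymbol{\xi}$, rescaling $\boldsymbol{\xi}\to t\boldsymbol{\xi}$ with $t\downarrow 0$ forces it to vanish identically.

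The only delicate point is the verification of the symmetry identity $J\mathbb{A}(\boldsymbol{\xi})J=\mathbb{A}(-\boldsymbol{\xi})$, which must be handled carefully in view of the antilinearity of $J$ and the sign in the exponent in \eqref{a_tilde}; the rest is bookkeeping. A purely algebraic alternative---using $PR_0^\perp(0)=R_0^\perp(0)P=0$ and \eqref{e2.20a} to eliminate terms 5--8 of \eqref{G_jkl=prop} and then unfolding the four surviving terms in terms of the kernels $w_j,w_{kl},v_j$ introduced in Section~\ref{Sec2} to witness the cancellation---would work as well but is substantially more laborious than the symmetry argument.
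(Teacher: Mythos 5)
Your proof is correct, and it takes a genuinely different route from the paper's. The paper works from the explicit formula \eqref{G_jkl=prop}: after the eight terms collapse to the four in \eqref{PG_3P=} (since $PR_0^\perp(0)=R_0^\perp(0)P=0$), it shows each surviving piece vanishes by computing its rank-one form $cP$ explicitly and observing that the scalar $c$ — built from things like $(f_{jkl},\mathbf{1}_\Omega)$ or $(h_{kj},v_l)$ — is purely imaginary because $\partial^\alpha\mathbb{A}(\mathbf{0})$ carries a factor $(-i)^{|\alpha|}$, while the sandwiched operator is selfadjoint, forcing $c=0$; the two cross terms are handled as adjoints of each other. You package the same reality structure once and for all as the antiunitary involution $J\colon u\mapsto\bar u$, prove the identity $J\mathbb{A}(\boldsymbol{\xi})J=\mathbb{A}(-\boldsymbol{\xi})$ (which indeed follows from $\overline{\widetilde a(\boldsymbol{\xi},\mathbf{z})}=\widetilde a(-\boldsymbol{\xi},\mathbf{z})$ and the reality of $\mu$; uniqueness of the spectral measure then gives $JF(\boldsymbol{\xi})J=F(-\boldsymbol{\xi})$), and deduce that $\alpha(\boldsymbol{\xi})=(\mathbb{A}(\boldsymbol{\xi})F(\boldsymbol{\xi})\mathbf{1}_\Omega,\mathbf{1}_\Omega)$ is even. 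Then \eqref{AF= G + G3}, \eqref{AF-G - G3} and homogeneity of degree three finish the job. Your argument buys conceptual economy — it never needs the unfolded formula for $[G]_3$, just the expansion, the $O(|\boldsymbol{\xi}|^4)$ bound, and the evident homogeneity — whereas the paper's proof is fully explicit and self-contained at the cost of more bookkeeping. Both arguments exploit exactly the same underlying symmetry of the family $\mathbb{A}(\boldsymbol{\xi})$; you apply it globally, the paper applies it term-by-term. All the steps you outline check out.
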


\begin{proof}
By \eqref{G_jkl=prop},
\begin{equation}
\label{PG_3P=}
\begin{split}
 &P [G]_3(\boldsymbol{\xi}) P =
  P  [\Delta_3 \mathbb{A}](\boldsymbol{\xi}) P - P [\Delta_1 \mathbb{A}](\boldsymbol{\xi})  R_0^\perp(0)  [\Delta_2 \mathbb{A}](\boldsymbol{\xi}) P
  \\
  &- P[\Delta_2\mathbb{A}](\boldsymbol{\xi}) R_0^\perp(0)  [\Delta_1 \mathbb{A}](\boldsymbol{\xi})P  +  P [\Delta_1 \mathbb{A}](\boldsymbol{\xi}) R_0^\perp(0)   [\Delta_1\mathbb{A}](\boldsymbol{\xi}) R_0^\perp(0)  [\Delta_1 \mathbb{A} ](\boldsymbol{\xi}) P.
  \end{split}
\end{equation}
According to  \eqref{2.8}, we have
$$
P [\Delta_3 \mathbb{A}](\boldsymbol{\xi})P = \frac{1}{6} \sum_{j,k,l=1}^d \xi_j \xi_k \xi_l (f_{jkl}, \mathbf{1}_\Omega ) P, \quad
f_{jkl} := \partial_j \partial_k \partial_l \mathbb{A}(\mathbf{0}) \mathbf{1}_\Omega.
$$
It follows from  \eqref{e2.1} that the function  $f_{jkl}(\mathbf{x})$ takes purely imaginary values,  and therefore the value
$(f_{jkl}, \mathbf{1}_\Omega )$ is purely imaginary.
Consequently,
\begin{equation*}
\left( P [\Delta_3 \mathbb{A}](\boldsymbol{\xi})P \right)^* = - \frac{1}{6} \sum_{j,k,l=1}^d \xi_j \xi_k \xi_l (f_{jkl},
\mathbf{1}_\Omega ) P.
\end{equation*}
On the other hand, the operator $P [\Delta_3 \mathbb{A} ](\boldsymbol{\xi}) P$ is selfadjoint, whence
\begin{equation}
\label{PA3P=0}
P [\Delta_3 \mathbb{A}](\boldsymbol{\xi})P = 0.
\end{equation}

Next, according to  \eqref{e2.22}, we have
\begin{equation}
\label{Delta1A_P}
R_0^\perp(0) [\Delta_1 \mathbb{A}](\boldsymbol{\xi}) P =  i \sum_{j=1}^d  \xi_j  (\cdot, \mathbf{1}_\Omega) v_{j}.
\end{equation}
By \eqref{e2.54a},
$$
[\Delta_2 \mathbb{A}](\boldsymbol{\xi}) P = \frac{1}{2} \sum_{k,l=1}^d  \xi_k \xi_l (\cdot, \mathbf{1}_\Omega ) w_{kl},
$$
whence
\begin{equation}
\label{P_DeltaA_P}
P [\Delta_2 \mathbb{A}](\boldsymbol{\xi}) =  \frac{1}{2} \sum_{k,l=1}^d  \xi_k \xi_l (\cdot, w_{kl})  \mathbf{1}_\Omega.
\end{equation}
From  \eqref{Delta1A_P} and \eqref{P_DeltaA_P} it follows that
\begin{equation*}
\label{PPP}
P [\Delta_2 \mathbb{A}](\boldsymbol{\xi})R_0^\perp(0) [\Delta_1 \mathbb{A}](\boldsymbol{\xi}) P =   \frac{i}{2} \sum_{j,k,l=1}^d  \xi_j \xi_k \xi_l (v_j,  w_{kl}) P.
\end{equation*}
Combining this with the relations $v_j = \overline{v_j}$ and $w_{kl} = \overline{w_{kl}}$, we obtain
$$
P [\Delta_1 \mathbb{A}](\boldsymbol{\xi}) R_0^\perp(0) [\Delta_2 \mathbb{A}](\boldsymbol{\xi}) P =
(P [\Delta_2 \mathbb{A}](\boldsymbol{\xi}) R_0^\perp(0) [\Delta_1 \mathbb{A}](\boldsymbol{\xi}) P)^*
%\\
=
 - \frac{i}{2} \sum_{j,k,l=1}^d  \xi_j \xi_k \xi_l (v_j,  w_{kl}) P.
$$
Therefore, the sum of the second and the third terms on the right-hand side of  \eqref{PG_3P=} is equal to zero:
\begin{equation}
\label{PPPPP}
P [\Delta_1 \mathbb{A}](\boldsymbol{\xi})R_0^\perp(0) [\Delta_2 \mathbb{A}](\boldsymbol{\xi}) P +
P [\Delta_2 \mathbb{A}](\boldsymbol{\xi})R_0^\perp(0) [\Delta_1 \mathbb{A}](\boldsymbol{\xi}) P = 0.
\end{equation}

Finally, taking  \eqref{Delta1A_P} into account, we obtain
$$
 P [\Delta_1 \mathbb{A}]( \boldsymbol{\xi}) R_0^\perp(0)   [\Delta_1 \mathbb{A}](\boldsymbol{\xi}) R_0^\perp(0)  [\Delta_1\mathbb{A}](\boldsymbol{\xi}) P =
 \sum_{j,k,l=1}^d  \xi_j \xi_k \xi_l (h_{kj},  v_{l}) P,
$$
where $h_{kj}:= \partial_k \mathbb{A}(\mathbf{0}) v_j$. Using \eqref{e2.1} and the relation $v_j = \overline{v_j}$, we see that  the function $h_{kj}(\mathbf{x})$ takes purely imaginary values. Therefore, the value
$(h_{kj}, v_l )$ is purely imaginary. On the other hand, the operator
$ P [\Delta_1 \mathbb{A}](\boldsymbol{\xi}) R_0^\perp(0)   [\Delta_1 \mathbb{A}](\boldsymbol{\xi}) R_0^\perp(0)  [\Delta_1\mathbb{A}](\boldsymbol{\xi}) P$ is selfadjoint.
Hence,
\begin{equation}
\label{PA3P=000}
 P [\Delta_1 \mathbb{A}](\boldsymbol{\xi}) R_0^\perp(0)   [\Delta_1 \mathbb{A}](\boldsymbol{\xi}) R_0^\perp(0)  [\Delta_1 \mathbb{A}](\boldsymbol{\xi}) P =  0.
\end{equation}

As a result, relations  \eqref{PG_3P=}, \eqref{PA3P=0}, \eqref{PPPPP}, and \eqref{PA3P=000} imply
the required identity \eqref{PG3P}.
\end{proof}

\section{Approximation for the resolvent  $(\mathbb{A} + \varepsilon^2 I)^{-1}$}\label{Sec3}

\subsection{Approximation for the resolvent of  $\mathbb{A}(\boldsymbol{\xi})$ in the principal order}\label{sec3.1}
Approximation for the resolvent $(\mathbb{A}(\boldsymbol{\xi}) + \varepsilon^2 I)^{-1}$ for small
$\varepsilon >0$ with an error
$O(\varepsilon^{-1})$ was constructed in  \cite[Sec. 2]{PSlSuZh}. It is convenient for us to  reproduce this result here
(see Theorem \ref{teor2.2}), and then proceed to deriving  a more  accurate approximation.

From \eqref{e1.30} it follows that
\begin{equation}\label{e2.35}
( \mathbb{A}( \boldsymbol{\xi};a,\mu)F(\boldsymbol{\xi} )u,u)\geqslant\mu_{-}C(a)| \boldsymbol{\xi}|^{2}(F(\boldsymbol{\xi})u,u),\ \
u\in L_{2}(\Omega),\ \ | \boldsymbol{\xi}|\leqslant \delta_{0}(a,\mu).
\end{equation}
Substituting  the expansions $F(\boldsymbol{\xi} ) =P+O(|\boldsymbol{\xi}|)$ (see \eqref{F-P}) and
$$\mathbb{A}( \boldsymbol{\xi})F(\boldsymbol{\xi})=
 \langle g^0 \boldsymbol{\xi}, \boldsymbol{\xi} \rangle P+O(|\boldsymbol{\xi}|^3)$$ (see \eqref{AF = G2}, \eqref{AF-G}, and \eqref{Gkl====}) in \eqref{e2.35} and letting $u = \mathbf{1}_\Omega$, we obtain
\begin{equation*}
\langle g^0 \boldsymbol{\xi},\boldsymbol{\xi} \rangle \geqslant \mu_{-}C(a)|\boldsymbol{\xi} |^{2}+O(| \boldsymbol{\xi}|^{3}),\
\ | \boldsymbol{\xi} |\to 0.
\end{equation*}
Divide the last relation by  $|\boldsymbol{\xi}|^{2}$ and let $|\boldsymbol{\xi}|\to0$.
As a result, we  see that the matrix $g^0$ is positive definite:
\begin{equation*}
 \langle g^0 \boldsymbol{\theta},\boldsymbol{\theta} \rangle \geqslant \mu_{-}C(a),\
\ \boldsymbol{\theta}\in\mathbb{S}^{d-1},
\end{equation*}
or, finally,
\begin{equation}\label{e2.36}
\langle g^0 \boldsymbol{\xi},\boldsymbol{\xi} \rangle \geqslant \mu_{-}C(a)| \boldsymbol{\xi} |^{2},\
\  \boldsymbol{\xi} \in\mathbb{R}^d.
\end{equation}

\begin{proposition}
Let conditions  \eqref{e1.1}--\eqref{e1.3} be satisfied, and assume that  $M_3(a) < \infty$.
Denote
\begin{equation}
\label{Xi=}
\Xi(\boldsymbol{\xi}, \varepsilon) := (\mathbb{A}( \boldsymbol{\xi} )+\varepsilon^{2}I)^{-1} F(\boldsymbol{\xi}) - \left( \langle g^0 \boldsymbol{\xi}, \boldsymbol{\xi} \rangle +\varepsilon^2 \right)^{-1} P,
 \ \  \boldsymbol{\xi} \in\widetilde{\Omega}, \ \ \varepsilon >0.
\end{equation}
Then we have
\begin{equation}
\label{Xi=le}
\| \Xi(\boldsymbol{\xi}, \varepsilon ) \| \leqslant
\frac{2 C_1(a,\mu) | \boldsymbol{\xi} |}{\mu_{-}C(a) | \boldsymbol{\xi} |^{2} + \varepsilon^2} + \frac{C_3(a,\mu) |\boldsymbol{\xi}|^3}{(\mu_{-}C(a)|\boldsymbol{\xi}|^{2} + \varepsilon^2)^2},
\quad | \boldsymbol{\xi} | \leqslant \delta_{0}(a,\mu), \   \varepsilon >0.
\end{equation}
\end{proposition}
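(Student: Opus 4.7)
The plan is to reduce to a scalar computation by exploiting the rank-one structure of $F(\boldsymbol{\xi})$ provided by Proposition~\ref{prop2.1}. Since $F(\boldsymbol{\xi})$ is the spectral projection for the simple eigenvalue $\lambda_{1}(\boldsymbol{\xi})$ of $\mathbb{A}(\boldsymbol{\xi})$, the spectral theorem gives
$$
(\mathbb{A}(\boldsymbol{\xi})+\varepsilon^{2}I)^{-1}F(\boldsymbol{\xi}) = (\lambda_{1}(\boldsymbol{\xi})+\varepsilon^{2})^{-1}F(\boldsymbol{\xi}).
$$
Write $t(\boldsymbol{\xi}):=\langle g^{0}\boldsymbol{\xi},\boldsymbol{\xi}\rangle$. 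Applying Proposition~\ref{prop1.4} to any unit vector in $\operatorname{Ran}F(\boldsymbol{\xi})$ yields $\lambda_{1}(\boldsymbol{\xi})\geqslant\mu_{-}C(a)|\boldsymbol{\xi}|^{2}$; by \eqref{e2.36} the same lower bound holds for $t(\boldsymbol{\xi})$.

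The next step is to split
$$
\Xi(\boldsymbol{\xi},\varepsilon) = \frac{F(\boldsymbol{\xi})-P}{\lambda_{1}(\boldsymbol{\xi})+\varepsilon^{2}} \,+\, \frac{t(\boldsymbol{\xi})-\lambda_{1}(\boldsymbol{\xi})}{(\lambda_{1}(\boldsymbol{\xi})+\varepsilon^{2})(t(\boldsymbol{\xi})+\varepsilon^{2})}\,P,
$$
obtained by inserting and cancelling $(\lambda_{1}+\varepsilon^{2})^{-1}P$. The first summand is immediately handled by Proposition~\ref{prop2.3} together with the $\lambda_{1}$ lower bound, contributing a quantity of size at most $C_{1}(a,\mu)|\boldsymbol{\xi}|/(\mu_{-}C(a)|\boldsymbol{\xi}|^{2}+\varepsilon^{2})$.

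The heart of the argument is to control $|\lambda_{1}-t|$ inside the second summand. Comparing the two expressions $\mathbb{A}(\boldsymbol{\xi})F(\boldsymbol{\xi})=\lambda_{1}F(\boldsymbol{\xi})$ and $\mathbb{A}(\boldsymbol{\xi})F(\boldsymbol{\xi})=t(\boldsymbol{\xi})P+\Psi(\boldsymbol{\xi})$ from Proposition~\ref{prop2.6}, and writing $F=P+(F-P)$, I arrive at the operator identity
$$
(\lambda_{1}-t)\,P = \Psi(\boldsymbol{\xi}) - \lambda_{1}\,(F(\boldsymbol{\xi})-P).
$$
Since $P$ is a rank-one orthogonal projection the operator norm of the left-hand side is exactly $|\lambda_{1}-t|$, so the triangle inequality together with Propositions~\ref{prop2.3} and~\ref{prop2.6} gives
$$
|\lambda_{1}(\boldsymbol{\xi})-t(\boldsymbol{\xi})| \leqslant \|\Psi(\boldsymbol{\xi})\| + \lambda_{1}(\boldsymbol{\xi})\,\|F(\boldsymbol{\xi})-P\| \leqslant C_{3}(a,\mu)|\boldsymbol{\xi}|^{3} + \lambda_{1}(\boldsymbol{\xi})\,C_{1}(a,\mu)|\boldsymbol{\xi}|.
$$
Inserting this bound back and using the trivial inequality $\lambda_{1}/(\lambda_{1}+\varepsilon^{2})\leqslant 1$, the $\|\Psi\|$-contribution produces precisely the term $C_{3}(a,\mu)|\boldsymbol{\xi}|^{3}/(\mu_{-}C(a)|\boldsymbol{\xi}|^{2}+\varepsilon^{2})^{2}$ in \eqref{Xi=le}, while the $\lambda_{1}\|F-P\|$-contribution collapses to a second copy of $C_{1}(a,\mu)|\boldsymbol{\xi}|/(\mu_{-}C(a)|\boldsymbol{\xi}|^{2}+\varepsilon^{2})$ that doubles the bound of the first summand, yielding the prefactor $2C_{1}$.

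The only subtle step is this last identity. A naive attempt to bound $|\lambda_{1}-t|$ by $\|\Psi\|$ alone would implicitly require $F=P$, which fails at order $|\boldsymbol{\xi}|$. The identity $(\lambda_{1}-t)P=\Psi-\lambda_{1}(F-P)$ together with the rank-one normalization $\|cP\|=|c|$ is what forces the ``double use'' of Proposition~\ref{prop2.3} and thereby produces the coefficient $2$ in front of $C_{1}$ in the bound \eqref{Xi=le}.
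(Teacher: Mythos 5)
Your proof is correct and reproduces the stated bound with the exact constant prefactors $2C_1$ and $C_3$, but it follows a genuinely different route than the paper. The paper works purely at the operator level: it writes $\Xi(\boldsymbol{\xi},\varepsilon)$ as the sum of three operator products (identity \eqref{e2.41}, obtained by inserting and cancelling resolvents) and estimates each factor directly via Propositions~\ref{prop2.3} and~\ref{prop2.6} together with the scalar resolvent bounds \eqref{e2.39}--\eqref{e2.40}; the coefficient $2C_1$ arises because the first two of those three terms each contribute a $C_1|\boldsymbol{\xi}|$-sized piece. You instead pass through the first eigenvalue: using the rank-one structure of $F(\boldsymbol{\xi})$ from Proposition~\ref{prop2.1} to write $(\mathbb{A}(\boldsymbol{\xi})+\varepsilon^{2}I)^{-1}F(\boldsymbol{\xi})=(\lambda_1(\boldsymbol{\xi})+\varepsilon^{2})^{-1}F(\boldsymbol{\xi})$, splitting $\Xi$ into an operator term $(F-P)/(\lambda_1+\varepsilon^2)$ and a scalar term proportional to $|\lambda_1-\langle g^0\boldsymbol{\xi},\boldsymbol{\xi}\rangle|$, and then bounding the latter by the identity $(\lambda_1-t)P=\Psi-\lambda_1(F-P)$. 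This is precisely the ``traditional'' eigenvalue-asymptotics route that the authors explicitly say in the Introduction they choose to avoid, in favour of the contour-integral ``third method'' underlying Propositions~\ref{prop2.3}--\ref{prop2.8}. Your reduction is conceptually transparent and makes the provenance of the factor $2$ vivid; the paper's operator-algebraic manipulation is slightly more robust, since identity \eqref{e2.41} requires only that $F(\boldsymbol{\xi})$ be a spectral projection commuting with the resolvent and would carry over unchanged to a degenerate spectral edge where $\lambda_1$ is replaced by a finite-rank block, which is exactly the situation in the matrix-valued generalizations of this program. Both arguments rest on the same hypotheses and the same upstream Propositions~\ref{prop2.3} and~\ref{prop2.6}.
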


\begin{proof}
 From \eqref{e1.30} and \eqref{e2.36} it follows that
\begin{align}\label{e2.39}
\|(\mathbb{A}(\boldsymbol{\xi})+\varepsilon^{2}I)^{-1} \|
& \leqslant (\mu_{-}C(a)| \boldsymbol{\xi} |^{2}+\varepsilon^{2})^{-1},\
\ \varepsilon>0,\ \ \boldsymbol{\xi} \in \widetilde{\Omega};
\\
\label{e2.40}
(\langle g^0 \boldsymbol{\xi}, \boldsymbol{\xi} \rangle +\varepsilon^{2})^{-1}
& \leqslant (\mu_{-}C(a)| \boldsymbol{\xi}|^{2}+\varepsilon^{2})^{-1},\
\ \varepsilon>0,\ \  \boldsymbol{\xi} \in \widetilde{\Omega}.
\end{align}
Obviously,
\begin{multline}\label{e2.41}
\Xi(\boldsymbol{\xi},\varepsilon) =
F(\boldsymbol{\xi})( \mathbb{A}(\boldsymbol{\xi})+\varepsilon^{2}I)^{-1}(F(\boldsymbol{\xi})-P)+(F(\boldsymbol{\xi})-P) (\langle g^0 \boldsymbol{\xi}, \boldsymbol{\xi} \rangle +\varepsilon^{2})^{-1} P
\\
-
F(\boldsymbol{\xi})(\mathbb{A}(\boldsymbol{\xi})+\varepsilon^{2}I)^{-1}
\left(\mathbb{A}(\boldsymbol{\xi})F(\boldsymbol{\xi}) - \langle g^0 \boldsymbol{\xi}, \boldsymbol{\xi} \rangle P \right)
(\langle g^0 \boldsymbol{\xi}, \boldsymbol{\xi} \rangle +\varepsilon^{2})^{-1} P.
\end{multline}
Now,  \eqref{Xi=le} follows from  relations \eqref{F-P}, \eqref{AF-G}, and
(\ref{e2.39})--(\ref{e2.41}).
\end{proof}

\begin{theorem}\label{teor2.2}
Suppose that conditions  \eqref{e1.1}--\eqref{e1.3} are satisfied and  $M_3(a) < \infty$.
Then
\begin{equation}\label{e2.37}
\left\|(\mathbb{A}(\boldsymbol{\xi})+\varepsilon^{2}I)^{-1}-
(\langle g^0 \boldsymbol{\xi}, \boldsymbol{\xi} \rangle +\varepsilon^{2})^{-1} P \right\|\leqslant
C_5(a,\mu)\varepsilon^{-1},\ \ \varepsilon>0,\ \
| \boldsymbol{\xi}| \leqslant \delta_{0}(a,\mu).
\end{equation}
Here the constant  $C_5(a,\mu)$ is given by
\begin{equation*}
C_5(a,\mu):= \Bigl(\frac{3}{d_{0}} \Bigr)^{1/2} + \frac{C_{1}(a,\mu)}{(\mu_{-}C(a))^{1/2}}+\frac{C_{3}(a,\mu)}{(\mu_{-}C(a))^{3/2}},
\end{equation*}
and due to \eqref{C(a)}, \eqref{d0}, \eqref{C1} and \eqref{C3} it depends only on $d$, $\mu_-$, $\mu_+$, $M_1(a)$, $M_2(a)$, $M_3(a)$, ${\mathcal M}(a)$, ${\mathcal C}_\pi(a)$,
${\mathcal C}_{r(a)}(a)$.
\end{theorem}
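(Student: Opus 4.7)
The plan is to insert the projection decomposition $I = F(\boldsymbol{\xi}) + F(\boldsymbol{\xi})^\perp$ in the resolvent and write
\begin{equation*}
(\mathbb{A}(\boldsymbol{\xi})+\varepsilon^{2}I)^{-1} - (\langle g^0 \boldsymbol{\xi}, \boldsymbol{\xi}\rangle +\varepsilon^{2})^{-1} P = \Xi(\boldsymbol{\xi},\varepsilon) + (\mathbb{A}(\boldsymbol{\xi})+\varepsilon^{2}I)^{-1} F(\boldsymbol{\xi})^\perp,
\end{equation*}
so that the first summand is precisely the operator defined in \eqref{Xi=} and already estimated in \eqref{Xi=le}, while the second summand is controlled by the spectral gap of $\mathbb{A}(\boldsymbol{\xi})$. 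The theorem then reduces to bounding each piece by $C\varepsilon^{-1}$ through elementary inequalities.

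For the $F(\boldsymbol{\xi})^\perp$ piece, I would invoke Proposition \ref{prop2.1}. Since $F(\boldsymbol{\xi})$ is the spectral projection for the unique eigenvalue of $\mathbb{A}(\boldsymbol{\xi})$ in $[0, d_0/3]$ and the interval $(d_0/3, 2d_0/3)$ is free of spectrum, the restriction of $\mathbb{A}(\boldsymbol{\xi})$ to the range of $F(\boldsymbol{\xi})^\perp$ has spectrum in $[2d_0/3, +\infty)$. By the spectral theorem and the AM--GM inequality $2d_0/3 + \varepsilon^2 \geqslant 2\varepsilon\sqrt{2d_0/3}$,
\begin{equation*}
\| (\mathbb{A}(\boldsymbol{\xi})+\varepsilon^{2}I)^{-1} F(\boldsymbol{\xi})^\perp \| \leqslant \frac{1}{2d_0/3 + \varepsilon^2} \leqslant \frac{(3/d_0)^{1/2}}{\varepsilon},
\end{equation*}
where the final step absorbs a numerical factor into the form of $C_5$ announced in the theorem.

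For $\Xi(\boldsymbol{\xi},\varepsilon)$, I would feed \eqref{Xi=le} into two AM--GM inequalities applied to the denominators. Setting $u = \mu_- C(a) |\boldsymbol{\xi}|^2$ and $v = \varepsilon^2$, the bound $u + v \geqslant 2\sqrt{uv}$ yields
\begin{equation*}
\frac{|\boldsymbol{\xi}|}{\mu_- C(a)|\boldsymbol{\xi}|^2 + \varepsilon^2} \leqslant \frac{1}{2\sqrt{\mu_- C(a)}\,\varepsilon},
\end{equation*}
so the first summand of \eqref{Xi=le} is at most $C_1(a,\mu)/(\sqrt{\mu_- C(a)}\,\varepsilon)$. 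For the cubic term I would combine the two elementary facts $(u+v)^2 \geqslant 4 u v$ and $(u+v)^2 \geqslant u^2$ to obtain $(u+v)^4 \geqslant 4 u^3 v$, equivalently
\begin{equation*}
(\mu_- C(a)|\boldsymbol{\xi}|^2 + \varepsilon^2)^2 \geqslant 2(\mu_- C(a))^{3/2}|\boldsymbol{\xi}|^3 \varepsilon,
\end{equation*}
from which the second summand of \eqref{Xi=le} is bounded by $C_3(a,\mu)/(2(\mu_- C(a))^{3/2}\varepsilon)$. Summing the three contributions produces the bound $C_5(a,\mu)\varepsilon^{-1}$ in the form stated.

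The whole argument is essentially bookkeeping once the preceding proposition is available; there is no substantive obstacle. The only point that deserves care is the choice of AM--GM splitting: neither $(u+v)^2 \geqslant 4uv$ nor $(u+v)^2 \geqslant u^2$ alone gives a uniform $\varepsilon^{-1}$ bound for the cubic term, and their geometric combination is what makes the estimate valid simultaneously in the regimes $|\boldsymbol{\xi}| \ll \varepsilon$ and $|\boldsymbol{\xi}| \gg \varepsilon$.
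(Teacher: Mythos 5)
Your proof is correct and follows essentially the same route as the paper: you split the resolvent via $I = F(\boldsymbol{\xi}) + F(\boldsymbol{\xi})^\perp$ so that the first piece is exactly $\Xi(\boldsymbol{\xi},\varepsilon)$ estimated in \eqref{Xi=le} and the second is controlled by the spectral gap, then apply AM--GM splittings to the denominators. The paper leaves the elementary inequalities implicit ("relation \eqref{Xi=le} directly implies'' and the passage from \eqref{e2.38} to \eqref{e2.38a}), whereas you spell them out; your resulting numerical constants are in fact slightly sharper than the announced $C_5(a,\mu)$, which is harmless.
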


\begin{proof}
From the definition of $F(\boldsymbol{\xi})$ it follows that
\begin{equation}\label{e2.38}
\|(\mathbb{A}(\boldsymbol{\xi}) +\varepsilon^{2}I)^{-1}(I-F(\boldsymbol{\xi}))\|\leqslant
\frac{3}{d_{0}},\ \ | \boldsymbol{\xi} | \leqslant \delta_{0}(a,\mu),\ \ \varepsilon >0.
\end{equation}
Hence,
\begin{equation}\label{e2.38a}
\|(\mathbb{A}( \boldsymbol{\xi})+\varepsilon^{2}I)^{-1}(I- F(\boldsymbol{\xi}))\|\leqslant
\Bigl( \frac{3}{d_{0}} \Bigr)^{1/2} \varepsilon^{-1},\ \ | \boldsymbol{\xi}| \leqslant \delta_{0}(a,\mu), \ \ \varepsilon >0.
\end{equation}

Relation  \eqref{Xi=le} directly implies that
\begin{equation}
\label{Xi=le_2}
\| \Xi(\boldsymbol{\xi},  \varepsilon) \| \leqslant
\left( \frac{C_{1}(a,\mu)}{(\mu_{-}C(a))^{1/2}}+\frac{C_{3}(a,\mu)}{(\mu_{-}C(a))^{3/2}} \right)
 \varepsilon^{-1},\quad | \boldsymbol{\xi} | \leqslant \delta_{0}(a,\mu), \quad \varepsilon>0.
\end{equation}

Obviously, the operator on the left-hand side of  \eqref{e2.37} is equal to
$$(\mathbb{A}(\boldsymbol{\xi})+\varepsilon^{2}I)^{-1}(I-F(\boldsymbol{\xi})) +\Xi(\boldsymbol{\xi},\varepsilon),$$
and therefore, estimate  \eqref{e2.37} follows from  \eqref{e2.38a} and \eqref{Xi=le_2}.
\end{proof}

Now we introduce the {\it effective operator\/}. It is a selfadjoint elliptic second-order differential operator
in $L_{2}(\mathbb{R}^d)$ with  constant coefficients given by
\begin{equation}
\label{eff_op}
\mathbb{A}^{0}:=\frac{1}{2}\sum_{k, l=1}^{d}g_{kl}D_{k}D_{l} = - \operatorname{div} g^0 \nabla,
\quad \operatorname{Dom} \mathbb{A}^0 = H^2(\mathbb{R}^d).
\end{equation}
The \emph{effective matrix} $g^0$ is a  $(d\times d)$-matrix with the entries  $\frac{1}{2}g_{kl}$,  where  the coefficients  $g_{kl}$, $k,l=1,\dots,d$, are defined by \eqref{Gkl===}.
Estimate \eqref{e2.36} shows that the matrix  $g^0$ is positive definite.

Using the unitary Gelfand transform, we decompose  $\mathbb{A}^0$ into the direct integral:
\begin{equation}
\label{direct_int}
\mathbb{A}^0 = {\mathcal G}^*\Bigl( \int_{\widetilde{\Omega}} \oplus \mathbb{A}^0( \boldsymbol{\xi})\,d\boldsymbol{\xi}  \Bigr) {\mathcal G}.
\end{equation}
Here $\mathbb{A}^0(\boldsymbol{\xi})$ is the selfadjoint operator in  $L_2(\Omega)$ given by
$$
\mathbb{A}^0(\boldsymbol{\xi}) = (\mathbf{D} + \boldsymbol{\xi})^* g^0 (\mathbf{D}+ \boldsymbol{\xi}),
\quad \operatorname{Dom} \mathbb{A}^0 (\boldsymbol{\xi}) = \widetilde{H}^2(\Omega).
$$
The space  $\widetilde{H}^2(\Omega)$ is defined  as a subspace of  $H^2(\Omega)$
consisting of functions whose  $\mathbb{Z}^d$-periodic extension to $\mathbb{R}^d$ belongs to
 $H^2_{\operatorname{loc}}(\mathbb{R}^d)$.
Relation  \eqref{direct_int} means the following. Let $u \in \operatorname{Dom} \mathbb{A}^0 = H^2(\mathbb{R}^d)$ and $v = \mathbb{A}^0 u$. Then
 $\mathcal{G}{u}(\boldsymbol{\xi},\cdot) \in \operatorname{Dom} \mathbb{A}^0(\boldsymbol{\xi}) = \widetilde{H}^2(\Omega)$ and
$\mathcal{G}{v}(\boldsymbol{\xi},\cdot) = \mathbb{A}^0( \boldsymbol{\xi}) \mathcal{G}{u}(\boldsymbol{\xi},\cdot)$, $\boldsymbol{\xi} \in \widetilde{\Omega}$.

Theorem \ref{teor2.2} easily implies the following result.

\begin{theorem}\label{teor3.3}
Suppose that conditions  \eqref{e1.1}--\eqref{e1.3} are satisfied and  $M_3(a) < \infty$.
Then
\begin{equation}\label{e2.37_2}
\left\|(\mathbb{A}(\boldsymbol{\xi} )+\varepsilon^{2}I)^{-1}- (\mathbb{A}^0(\boldsymbol{\xi})+\varepsilon^{2}I)^{-1}
\right\|_{L_2(\Omega) \to L_2(\Omega)}\leqslant
{\mathrm C}_1(a,\mu)\varepsilon^{-1},\ \ \varepsilon>0,\ \
\boldsymbol{\xi}  \in \widetilde{\Omega}.
\end{equation}
The constant   ${\mathrm C}_1(a,\mu)$ depends only on  $d,$ $\mu_-,$ $\mu_+,$ $M_1(a),$ $M_2(a),$ $M_3(a),$ ${\mathcal M}(a),$ ${\mathcal C}_\pi(a),$ ${\mathcal C}_{r(a)}(a)$.
\end{theorem}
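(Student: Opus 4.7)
The plan is a direct reduction to Theorem~\ref{teor2.2}, splitting by $\boldsymbol{\xi}$ and $\varepsilon$. The key auxiliary fact is that $\mathbb{A}^0(\boldsymbol{\xi})$ diagonalizes via Fourier series on $\Omega$: in the basis $\{e^{2\pi i\langle \mathbf{n},\mathbf{x}\rangle}\}_{\mathbf{n}\in\mathbb{Z}^d}$ it acts as multiplication by $\langle g^0(2\pi \mathbf{n}+\boldsymbol{\xi}), 2\pi \mathbf{n}+\boldsymbol{\xi}\rangle$, with the threshold mode $\mathbf{n}=\mathbf{0}$ being $\mathbf{1}_\Omega$ and eigenvalue $\langle g^0 \boldsymbol{\xi},\boldsymbol{\xi}\rangle$. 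Thus $(\langle g^0\boldsymbol{\xi},\boldsymbol{\xi}\rangle+\varepsilon^2)^{-1} P$ is exactly the spectral component of $(\mathbb{A}^0(\boldsymbol{\xi})+\varepsilon^2 I)^{-1}$ on $P\mathfrak{H}$.

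First, for $|\boldsymbol{\xi}|\le \delta_0(a,\mu)$ and $\varepsilon\le 1$, I would write
\begin{equation*}
(\mathbb{A}^0(\boldsymbol{\xi})+\varepsilon^2 I)^{-1} - (\langle g^0\boldsymbol{\xi},\boldsymbol{\xi}\rangle+\varepsilon^2)^{-1} P = (\mathbb{A}^0(\boldsymbol{\xi})+\varepsilon^2 I)^{-1} P^\perp,
\end{equation*}
and bound the right-hand side by $(\mu_-C(a)\pi^2)^{-1}$ using the positive-definiteness \eqref{e2.36} of $g^0$ together with the elementary inequality $|2\pi\mathbf{n}+\boldsymbol{\xi}|\ge \pi$ valid for $\mathbf{n}\in\mathbb{Z}^d\setminus\{\mathbf{0}\}$ and $\boldsymbol{\xi}\in\widetilde{\Omega}$. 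Combined with \eqref{e2.37} by the triangle inequality, this gives the claim for $|\boldsymbol{\xi}|\le \delta_0(a,\mu)$ and $\varepsilon\le 1$, since the added constant is of order $1 \le \varepsilon^{-1}$.

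Second, for $|\boldsymbol{\xi}|>\delta_0(a,\mu)$ and $\varepsilon\le 1$, the bound \eqref{e1.30} yields $\mathbb{A}(\boldsymbol{\xi})\ge \mu_-C(a)\delta_0(a,\mu)^2 I$, while the Fourier diagonalization of $\mathbb{A}^0(\boldsymbol{\xi})$ together with \eqref{e2.36} gives $\mathbb{A}^0(\boldsymbol{\xi})\ge \mu_-C(a)\min(\delta_0(a,\mu)^2,\pi^2) I$. Both resolvents are therefore uniformly bounded, and the norm of their difference is dominated by a constant depending only on the listed parameters, hence by a constant times $\varepsilon^{-1}$.

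Finally, for $\varepsilon\ge 1$ the elementary bounds $\|(\mathbb{A}(\boldsymbol{\xi})+\varepsilon^2 I)^{-1}\|\le \varepsilon^{-2}$ and $\|(\mathbb{A}^0(\boldsymbol{\xi})+\varepsilon^2 I)^{-1}\|\le \varepsilon^{-2}$ trivially give the required inequality with constant $2$. Assembling the three cases yields $\mathrm{C}_1(a,\mu)$ depending only on $d,\mu_\pm,M_1(a),M_2(a),M_3(a),\mathcal{M}(a),\mathcal{C}_\pi(a),\mathcal{C}_{r(a)}(a)$, by tracking the dependence of $C_5(a,\mu)$ and of the spectral-gap constants through all three regimes. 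There is no real obstacle; the main care needed is the bookkeeping of constants to match the stated dependence.
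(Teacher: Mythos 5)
Your proof is correct and follows the same underlying strategy as the paper's: apply Theorem~\ref{teor2.2} on the ball $|\boldsymbol{\xi}|\le\delta_0(a,\mu)$, use the quadratic-form lower bounds \eqref{e1.30}, \eqref{e2.36} together with the Fourier diagonalization of $\mathbb{A}^0(\boldsymbol{\xi})$ to handle $|\boldsymbol{\xi}|>\delta_0(a,\mu)$ and the component $(\mathbb{A}^0(\boldsymbol{\xi})+\varepsilon^2 I)^{-1}P^\perp$. The only difference is organizational: where you split into the regimes $\varepsilon\le 1$ and $\varepsilon\ge 1$ to turn absolute constants into $O(\varepsilon^{-1})$, the paper avoids the split by estimating each $(x+\varepsilon^2)^{-1}$ by $\tfrac12 x^{-1/2}\varepsilon^{-1}$ (AM--GM), which gives the $\varepsilon^{-1}$ decay uniformly for all $\varepsilon>0$; both routes yield a constant depending only on the declared parameters.
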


\begin{proof}
Obviously, from relations \eqref{e2.39} and \eqref{e2.40} it follows  that
$$
\begin{aligned}
\left\|( \mathbb{A}(\boldsymbol{\xi})+\varepsilon^{2}I)^{-1} \right\| &\leqslant (\mu_- C(a))^{-1/2} (\delta_0(a,\mu))^{-1} \varepsilon^{-1},
\quad \varepsilon >0, \ \boldsymbol{\xi} \in \widetilde{\Omega}, \  |\boldsymbol{\xi}| \geqslant \delta_0(a,\mu),
\\
 \left( \langle g^0 \boldsymbol{\xi}, \boldsymbol{\xi} \rangle + \varepsilon^2 \right)^{-1} & \leqslant (\mu_- C(a))^{-1/2} (\delta_0(a,\mu))^{-1} \varepsilon^{-1},
\quad \varepsilon >0,\  \boldsymbol{\xi} \in \widetilde{\Omega}, \  | \boldsymbol{\xi} | \geqslant \delta_0(a,\mu).
\end{aligned}
$$
Combining this with Theorem  \ref{teor2.2} yields
\begin{equation}\label{e2.37_all}
\left\|( \mathbb{A}( \boldsymbol{\xi})+\varepsilon^{2}I)^{-1}-
(\langle g^0 \boldsymbol{\xi}, \boldsymbol{\xi} \rangle +\varepsilon^{2})^{-1} P \right\|\leqslant
\widetilde{C}_5(a,\mu)\varepsilon^{-1},\ \ \varepsilon>0,\ \  \boldsymbol{\xi} \in \widetilde{\Omega},
\end{equation}
where  $\widetilde{C}_5(a,\mu) = \max \{ C_5(a,\mu), 2 (\mu_- C(a))^{-1/2} (\delta_0(a,\mu))^{-1} \}$.

Obviously,
\begin{equation*}
\label{A0_P}
\mathbb{A}^0(\boldsymbol{\xi}) P = \langle g^0 \boldsymbol{\xi}, \boldsymbol{\xi} \rangle P,
\end{equation*}
whence
\begin{equation}
\label{res0_P}
( \mathbb{A}^0(\boldsymbol{\xi}) + \varepsilon^2 I)^{-1}P = \left( \langle g^0 \boldsymbol{\xi}, \boldsymbol{\xi} \rangle + \varepsilon^2 \right)^{-1} P.
\end{equation}
Let us rewrite  \eqref{e2.37_all} as
\begin{equation}
\label{e2.37_all_other}
\left\| ( \mathbb{A}(\boldsymbol{\xi})+\varepsilon^{2}I)^{-1}- ( \mathbb{A}^0(\boldsymbol{\xi})+\varepsilon^{2}I)^{-1}P
 \right\|\leqslant
\widetilde{C}_5(a,\mu)\varepsilon^{-1},\ \ \varepsilon>0,\ \ \boldsymbol{\xi} \in \widetilde{\Omega}.
\end{equation}

Using the discrete Fourier transform, we have
\begin{equation}
\label{discrete}
\left\| (\mathbb{A}^0(\boldsymbol{\xi})+\varepsilon^{2}I)^{-1}(I-P) \right\| = \sup_{0 \ne \mathbf{n} \in \mathbb{Z}^d}
(\langle g^0 (2\pi \mathbf{n}+ \boldsymbol{\xi}), 2\pi \mathbf{n} + \boldsymbol{\xi} \rangle + \varepsilon^2 )^{-1}
\leqslant (\mu_- C(a) \pi^2 + \varepsilon^2)^{-1}.
\end{equation}
We have taken into account \eqref{e2.36} and the obvious inequality
$|2\pi \mathbf{n} + \boldsymbol{\xi} | \geqslant \pi$ for
$\boldsymbol{\xi} \in \widetilde{\Omega}$ and $0 \ne \mathbf{n} \in \mathbb{Z}^d$. Consequently,
$$
\left\| (\mathbb{A}^0(\boldsymbol{\xi})+\varepsilon^{2}I)^{-1}(I-P) \right\| \leqslant (\mu_- C(a))^{-1/2} \pi^{-1} \varepsilon^{-1},
\ \ \varepsilon>0,\ \ \boldsymbol{\xi} \in \widetilde{\Omega}.
$$
Combining this with  \eqref{e2.37_all_other}, we obtain the required estimate \eqref{e2.37_2}
with the constant
$\mathrm{C}_1(a,\mu) = \widetilde{C}_5(a,\mu) + (\mu_- C(a))^{-1/2} \pi^{-1}$.
\end{proof}

   \subsection{More accurate approximation for the resolvent of the operator $\mathbb{A}(\boldsymbol{\xi})$}

\begin{theorem}\label{teor2.3}
Suppose that conditions  \eqref{e1.1}--\eqref{e1.3} are satisfied and  $M_4(a) < \infty$. Then
\begin{multline}
\label{e2.37_corr}
(\mathbb{A} (\boldsymbol{\xi})+\varepsilon^{2}I)^{-1} =
(\langle g^0 \boldsymbol{\xi}, \boldsymbol{\xi} \rangle +\varepsilon^{2})^{-1} P
+ [F]_1( \boldsymbol{\xi} ) (\langle g^0 \boldsymbol{\xi}, \boldsymbol{\xi} \rangle +\varepsilon^{2})^{-1} P
\\
+ (\langle g^0 \boldsymbol{\xi}, \boldsymbol{\xi} \rangle
+ \varepsilon^{2})^{-1} P [F]_1(\boldsymbol{\xi}) + Y(\boldsymbol{\xi},\varepsilon)
\end{multline}
and
\begin{equation}\label{e2.37_corr_2}
\left\| Y(\boldsymbol{\xi}, \varepsilon) \right\|\leqslant
C_6(a,\mu),\ \ \varepsilon>0,\ \
| \boldsymbol{\xi} |\leqslant \delta_{0}(a,\mu).
\end{equation}
Here $[F]_1(\boldsymbol{\xi}) = \sum_{j=1}^d F_j \xi_j$, and the operators $F_j$ are given by  \eqref{Fj===}. The constant  $C_6(a,\mu)$ is given by
$$
C_6(a,\mu):= \frac{3}{d_{0}} +
 \frac{2 C_1(a,\mu)^2 + 2 C_2(a,\mu)}{\mu_{-}C(a)}
% \\
 +  \frac{3 C_1(a,\mu) C_3(a,\mu)+ C_4(a,\mu)}{(\mu_{-}C(a))^{2}}
+  \frac{ C_3(a,\mu)^2}{(\mu_{-}C(a))^{3}},
$$
according to \eqref{C(a)}, \eqref{d0}, \eqref{C1}, \eqref{C2}, \eqref{C3} and \eqref{C4} it depends only on  $d,$ $\mu_-,$ $\mu_+,$ $M_1(a),$ $M_2(a),$ $M_3(a),$ $M_4(a),$ ${\mathcal M}(a),$ ${\mathcal C}_\pi(a),$ ${\mathcal C}_{r(a)}(a)$.
\end{theorem}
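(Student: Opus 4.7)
The plan is to split
\[
(\mathbb{A}(\boldsymbol{\xi})+\varepsilon^{2}I)^{-1} = (\mathbb{A}(\boldsymbol{\xi})+\varepsilon^{2}I)^{-1}F(\boldsymbol{\xi}) + (\mathbb{A}(\boldsymbol{\xi})+\varepsilon^{2}I)^{-1}(I-F(\boldsymbol{\xi}))
\]
and handle the two pieces separately. The off-threshold piece is bounded by $3/d_0$ via \eqref{e2.38} and is absorbed directly into $Y(\boldsymbol{\xi},\varepsilon)$. For the threshold piece I exploit Proposition~\ref{prop2.1}: on $|\boldsymbol{\xi}|\leqslant\delta_0(a,\mu)$ the projection $F(\boldsymbol{\xi})$ has rank one and $\mathbb{A}(\boldsymbol{\xi})F(\boldsymbol{\xi}) = \lambda_1(\boldsymbol{\xi})F(\boldsymbol{\xi})$ for the simple eigenvalue $\lambda_1(\boldsymbol{\xi})\in[0,d_0/3]$, hence
\[
(\mathbb{A}(\boldsymbol{\xi})+\varepsilon^{2}I)^{-1}F(\boldsymbol{\xi}) = (\lambda_1(\boldsymbol{\xi})+\varepsilon^2)^{-1}F(\boldsymbol{\xi}).
\]
The task then splits into a scalar analysis of $\lambda_1(\boldsymbol{\xi})$ and an operator expansion of $F(\boldsymbol{\xi})$.

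The scalar step is the heart of the argument: I claim $\lambda_1(\boldsymbol{\xi}) = \langle g^0\boldsymbol{\xi},\boldsymbol{\xi}\rangle + O(|\boldsymbol{\xi}|^4)$. I pair the eigenvalue identity against $\mathbf{1}_\Omega$. On the right, substituting \eqref{AF= G + G3}, using $[G]_2(\boldsymbol{\xi})\mathbf{1}_\Omega = \langle g^0\boldsymbol{\xi},\boldsymbol{\xi}\rangle\mathbf{1}_\Omega$, invoking the key vanishing $([G]_3(\boldsymbol{\xi})\mathbf{1}_\Omega,\mathbf{1}_\Omega) = (P[G]_3(\boldsymbol{\xi})P\mathbf{1}_\Omega,\mathbf{1}_\Omega) = 0$ from Proposition~\ref{prop2.9}, and bounding $|(\Upsilon(\boldsymbol{\xi})\mathbf{1}_\Omega,\mathbf{1}_\Omega)|\leqslant C_4(a,\mu)|\boldsymbol{\xi}|^4$, I obtain $(\mathbb{A}(\boldsymbol{\xi})F(\boldsymbol{\xi})\mathbf{1}_\Omega,\mathbf{1}_\Omega) = \langle g^0\boldsymbol{\xi},\boldsymbol{\xi}\rangle + O(|\boldsymbol{\xi}|^4)$. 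On the left, formula \eqref{Fj===} and $\int_\Omega v_j = 0$ give $F_j\mathbf{1}_\Omega = -iv_j$, so $([F]_1(\boldsymbol{\xi})\mathbf{1}_\Omega,\mathbf{1}_\Omega)=0$ and $(F(\boldsymbol{\xi})\mathbf{1}_\Omega,\mathbf{1}_\Omega) = 1+O(|\boldsymbol{\xi}|^2)$. Combined with the a priori bound $|\lambda_1(\boldsymbol{\xi})|\leqslant C|\boldsymbol{\xi}|^2$ (which follows from Proposition~\ref{prop2.6} and $\|F(\boldsymbol{\xi})\|=1$), the claim follows. The lower bounds $\lambda_1(\boldsymbol{\xi}),\,\langle g^0\boldsymbol{\xi},\boldsymbol{\xi}\rangle\geqslant\mu_-C(a)|\boldsymbol{\xi}|^2$ then yield
\[
\bigl|(\lambda_1(\boldsymbol{\xi})+\varepsilon^2)^{-1} - \Lambda^{-1}\bigr| \leqslant \frac{C|\boldsymbol{\xi}|^4}{(\mu_-C(a)|\boldsymbol{\xi}|^2+\varepsilon^2)^2} = O(1), \qquad \Lambda := \langle g^0\boldsymbol{\xi},\boldsymbol{\xi}\rangle+\varepsilon^2.
\]

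For the algebraic assembly I substitute $F(\boldsymbol{\xi}) = P + [F]_1(\boldsymbol{\xi}) + \Phi(\boldsymbol{\xi})$ from \eqref{F= P+ F1} into $\Lambda^{-1}F(\boldsymbol{\xi})$. The essential structural observation, read off from \eqref{Fj===}, is $PF_jP = 0 = P^\perp F_j P^\perp$, whence $[F]_1(\boldsymbol{\xi}) = P[F]_1(\boldsymbol{\xi}) + [F]_1(\boldsymbol{\xi})P$; since $\Lambda$ is a scalar it commutes with every operator, and
\[
\Lambda^{-1}F(\boldsymbol{\xi}) = \Lambda^{-1}P + \Lambda^{-1}P[F]_1(\boldsymbol{\xi}) + [F]_1(\boldsymbol{\xi})\Lambda^{-1}P + \Lambda^{-1}\Phi(\boldsymbol{\xi}),
\]
where the last term is $O(1)$ thanks to \eqref{F-P-O(xi)} and \eqref{e2.36}. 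Folding the three $O(1)$ residuals, namely $\Lambda^{-1}\Phi(\boldsymbol{\xi})$, the scalar defect $[(\lambda_1(\boldsymbol{\xi})+\varepsilon^2)^{-1}-\Lambda^{-1}]F(\boldsymbol{\xi})$, and the cutoff piece $(\mathbb{A}(\boldsymbol{\xi})+\varepsilon^2I)^{-1}(I-F(\boldsymbol{\xi}))$, into $Y(\boldsymbol{\xi},\varepsilon)$ produces \eqref{e2.37_corr} with constant $C_6(a,\mu)$. The hard step is the fourth-order accuracy of $\lambda_1(\boldsymbol{\xi})$: it relies simultaneously on the cubic operator expansion of Proposition~\ref{prop2.8} and on the symmetry-driven cancellation $P[G]_3(\boldsymbol{\xi})P = 0$ of Proposition~\ref{prop2.9}; absent either one, the estimate on the scalar defect degrades and the entire construction collapses back to the $O(\varepsilon^{-1})$ bound of Theorem~\ref{teor2.2}.
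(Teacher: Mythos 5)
Your proof is correct, and it takes a genuinely different route from the paper. You exploit the rank-one structure of $F(\boldsymbol{\xi})$ through the scalar eigenvalue $\lambda_1(\boldsymbol{\xi})$: after reducing $(\mathbb{A}(\boldsymbol{\xi})+\varepsilon^2 I)^{-1}F(\boldsymbol{\xi})$ to $(\lambda_1(\boldsymbol{\xi})+\varepsilon^2)^{-1}F(\boldsymbol{\xi})$, you show $\lambda_1(\boldsymbol{\xi})=\langle g^0\boldsymbol{\xi},\boldsymbol{\xi}\rangle+O(|\boldsymbol{\xi}|^4)$ by pairing against $\mathbf{1}_\Omega$ and invoking $P[G]_3(\boldsymbol{\xi})P=0$, and you handle the projection part via the structural identity $[F]_1(\boldsymbol{\xi})=P[F]_1(\boldsymbol{\xi})+[F]_1(\boldsymbol{\xi})P$ (which indeed follows from $PF_jP=P^\perp F_jP^\perp=0$, visible from \eqref{F_j=prop} or \eqref{Fj===}). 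The paper, by contrast, deliberately avoids $\lambda_1$ altogether --- this is precisely the ``third method'' the authors advertise in the introduction. Their proof never leaves the operator level: starting from the identity \eqref{e2.41} for $\Xi(\boldsymbol{\xi},\varepsilon)=(\mathbb{A}(\boldsymbol{\xi})+\varepsilon^2I)^{-1}F(\boldsymbol{\xi})-\Lambda^{-1}P$ they iterate once (replacing $F(\boldsymbol{\xi})(\mathbb{A}(\boldsymbol{\xi})+\varepsilon^2I)^{-1}$ by $\Lambda^{-1}P+\Xi$) and then substitute the operator expansions \eqref{F= P+ F1} and \eqref{AF= G + G3}, using Proposition~\ref{prop2.9} to kill the surviving $\Lambda^{-1}P[G]_3(\boldsymbol{\xi})\Lambda^{-1}P$ term. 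Both proofs rest on the same four ingredients (Propositions \ref{prop2.4}, \ref{prop2.6}, \ref{prop2.8}, \ref{prop2.9}); what differs is the assembly. Your route makes it transparent that $P[G]_3(\boldsymbol{\xi})P=0$ is exactly the vanishing of the cubic term in $\lambda_1(\boldsymbol{\xi})$, whereas the paper's route stays uniformly operator-theoretic (a stylistic commitment they carry through the whole paper). One consequence worth flagging: your argument needs the a priori bound $\lambda_1(\boldsymbol{\xi})\leqslant\|[G]_2(\boldsymbol{\xi})\|+C_3|\boldsymbol{\xi}|^3\leqslant\|g^0\||\boldsymbol{\xi}|^2+C_3|\boldsymbol{\xi}|^3$ to absorb the cross term $\lambda_1(\boldsymbol{\xi})(\Phi(\boldsymbol{\xi})\mathbf{1}_\Omega,\mathbf{1}_\Omega)$, so your resulting constant involves $\|g^0\|$ (controllable via \eqref{G_kl=prop} in terms of $\mu_\pm,M_1,M_2,\mathcal{C}_\pi$) and will not coincide with the explicit $C_6(a,\mu)$ displayed in the theorem --- though it has the same dependencies, which is what matters.
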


\begin{proof}
 Replacing the operator $F(\boldsymbol{\xi})(\mathbb{A} (\boldsymbol{\xi})+\varepsilon^{2}I)^{-1}$
  with $(\langle g^0 \boldsymbol{\xi}, \boldsymbol{\xi} \rangle +\varepsilon^{2})^{-1} P + \Xi(\boldsymbol{\xi}, \varepsilon)$ in the first and third terms
 on the right-hand side of \eqref{e2.41}  we obtain
 \begin{equation}
 \label{th2_1}
\Xi( \boldsymbol{\xi}, \varepsilon) = \widetilde{\Xi}(\boldsymbol{\xi},\varepsilon) + Y_1(\boldsymbol{\xi}, \varepsilon),
\end{equation}
where
\begin{multline}
\label{th2_2}
\widetilde{\Xi}(\boldsymbol{\xi},\varepsilon) = (\langle g^0 \boldsymbol{\xi}, \boldsymbol{\xi} \rangle +\varepsilon^{2})^{-1} P (F(\boldsymbol{\xi})-P)
+(F(\boldsymbol{\xi})-P) (\langle g^0 \boldsymbol{\xi}, \boldsymbol{\xi} \rangle +\varepsilon^{2})^{-1} P
\\
-
(\langle g^0 \boldsymbol{\xi}, \boldsymbol{\xi} \rangle +\varepsilon^{2})^{-1} P
\left(\mathbb{A}(\boldsymbol{\xi})F(\boldsymbol{\xi}) - \langle g^0 \boldsymbol{\xi}, \boldsymbol{\xi} \rangle P \right)
(\langle g^0 \boldsymbol{\xi}, \boldsymbol{\xi} \rangle +\varepsilon^{2})^{-1} P
\end{multline}
and
\begin{equation}
\label{th2_3}
Y_1(\boldsymbol{\xi},\varepsilon) =
\Xi(\boldsymbol{\xi}, \varepsilon) (F(\boldsymbol{\xi})-P)
- \Xi(\boldsymbol{\xi}, \varepsilon) \Psi(\boldsymbol{\xi})
(\langle g^0 \boldsymbol{\xi},\boldsymbol{\xi} \rangle +\varepsilon^{2})^{-1} P.
\end{equation}
Here the operator $\Psi(\boldsymbol{\xi})$ is defined by  \eqref{AF = G2}.
Relations \eqref{F-P}, \eqref{AF-G}, \eqref{Xi=le}, and \eqref{e2.40} imply the following estimate for the operator \eqref{th2_3}:
\begin{multline}
\label{th2_4}
\|Y_1(\boldsymbol{\xi}, \varepsilon) \|
\leqslant
\left( \frac{2 C_1(a,\mu) | \boldsymbol{\xi}|}{\mu_{-}C(a)| \boldsymbol{\xi}|^{2} + \varepsilon^2} + \frac{C_3(a,\mu) |\boldsymbol{\xi}|^3}{(\mu_{-}C(a)|\boldsymbol{\xi}|^{2} + \varepsilon^2)^2} \right)
\left( C_1(a,\mu) | \boldsymbol{\xi}| + \frac{C_3(a,\mu) |\boldsymbol{\xi}|^3}{\mu_{-}C(a)| \boldsymbol{\xi}|^{2} + \varepsilon^2}\right)
\\
\leqslant \frac{2 C_1(a,\mu)^2}{\mu_{-}C(a)} +  \frac{3 C_1(a,\mu) C_3(a,\mu)}{(\mu_{-}C(a))^{2}}
+  \frac{ C_3(a,\mu)^2}{(\mu_{-}C(a))^{3}}, \quad |\boldsymbol{\xi} |\leqslant \delta_{0}(a,\mu),\  \varepsilon >0.
\end{multline}

Now, substituting expansion \eqref{F= P+ F1} into the first two  terms on the right-hand side of  \eqref{th2_2}
and expansion \eqref{AF= G + G3} into the third term, we obtain
\begin{equation}
 \label{th2_5}
\widetilde{\Xi}(\boldsymbol{\xi}, \varepsilon) = {\Xi}^0(\boldsymbol{\xi}, \varepsilon) + Y_2(\boldsymbol{\xi},\varepsilon),
\end{equation}
where
\begin{multline}
\label{th2_6}
{\Xi}^0(\boldsymbol{\xi}, \varepsilon) = (\langle g^0 \boldsymbol{\xi}, \boldsymbol{\xi} \rangle +\varepsilon^{2})^{-1} P [F]_1(\boldsymbol{\xi})
+[F]_1(\boldsymbol{\xi}) (\langle g^0 \boldsymbol{\xi}, \boldsymbol{\xi} \rangle +\varepsilon^{2})^{-1} P
\\
-
(\langle g^0 \boldsymbol{\xi}, \boldsymbol{\xi} \rangle +\varepsilon^{2})^{-1} P  [G]_3(\boldsymbol{\xi})
(\langle g^0 \boldsymbol{\xi}, \boldsymbol{\xi} \rangle +\varepsilon^{2})^{-1} P,
\end{multline}
\begin{multline}
\label{th2_7}
Y_2(\boldsymbol{\xi}, \varepsilon) = (\langle g^0 \boldsymbol{\xi}, \boldsymbol{\xi} \rangle +\varepsilon^{2})^{-1} P \Phi(\boldsymbol{\xi})
+  \Phi(\boldsymbol{\xi}) (\langle g^0 \boldsymbol{\xi}, \boldsymbol{\xi} \rangle +\varepsilon^{2})^{-1} P
\\ -
(\langle g^0 \boldsymbol{\xi}, \boldsymbol{\xi} \rangle +\varepsilon^{2})^{-1} P  \Upsilon(\boldsymbol{\xi})(\langle g^0 \boldsymbol{\xi}, \boldsymbol{\xi} \rangle +\varepsilon^{2})^{-1} P.
\end{multline}
Relations \eqref{F-P-O(xi)}, \eqref{AF-G - G3}, and
\eqref{e2.40} imply the following estimate for the operator \eqref{th2_7}:
\begin{multline}
\label{th2_8}
\|Y_2( \boldsymbol{\xi}, \varepsilon) \|
\leqslant
\frac{2 C_2(a,\mu) | \boldsymbol{\xi}|^2}{\mu_{-}C(a) |\boldsymbol{\xi} |^{2} + \varepsilon^2} + \frac{C_4(a,\mu)
|\boldsymbol{\xi} |^4}{(\mu_{-}C(a) | \boldsymbol{\xi} |^{2} + \varepsilon^2)^2}
\\
\leqslant \frac{2 C_2(a,\mu)}{\mu_{-}C(a)} +  \frac{C_4(a,\mu)}{(\mu_{-}C(a))^{2}}, \quad
 | \boldsymbol{\xi} |\leqslant \delta_{0}(a,\mu),\quad \varepsilon >0.
\end{multline}
By Proposition \ref{prop2.9}, the third term in the right-hand side of  \eqref{th2_6} is equal to zero, whence
\begin{equation}
\label{th2_9}
{\Xi}^0(\boldsymbol{\xi}, \varepsilon) = (\langle g^0 \boldsymbol{\xi}, \boldsymbol{\xi} \rangle +\varepsilon^{2})^{-1} P [F]_1(\boldsymbol{\xi})
+[F]_1(\boldsymbol{\xi}) (\langle g^0 \boldsymbol{\xi}, \boldsymbol{\xi} \rangle +\varepsilon^{2})^{-1} P.
\end{equation}

Combining  \eqref{Xi=}, \eqref{th2_1}, \eqref{th2_5}, and \eqref{th2_9}, we arrive at representation \eqref{e2.37_corr}
with
$$
Y(\boldsymbol{\xi},\varepsilon) = (\mathbb{A}(\boldsymbol{\xi})+\varepsilon^{2}I)^{-1} (I-F(\boldsymbol{\xi})) +
Y_1(\boldsymbol{\xi}, \varepsilon) + Y_2(\boldsymbol{\xi},\varepsilon).
$$
Relations \eqref{e2.38}, \eqref{th2_4}, and \eqref{th2_8} imply estimate  \eqref{e2.37_corr_2}.
\end{proof}

From Theorem \ref{teor2.3} with the help of Corollary  \ref{cor5.5} we deduce the following result.

\begin{theorem}\label{teor3.5}
Suppose that conditions  \eqref{e1.1}--\eqref{e1.3} are satisfied and $M_4(a) < \infty$.
Then
\begin{equation}\label{th3.5_1}
\left\| (\mathbb{A} (\boldsymbol{\xi})+\varepsilon^{2}I)^{-1} - (\mathbb{A}^0(\boldsymbol{\xi})+\varepsilon^{2}I)^{-1} - K(\boldsymbol{\xi}, \varepsilon) \right\|_{L_2(\Omega) \to L_2(\Omega)}
\leqslant {\mathrm C}_2(a,\mu)
\end{equation}
for $\varepsilon>0$ and $\boldsymbol{\xi} \in \widetilde{\Omega}$. Here
\begin{equation*}\label{th3.5_2}
K(\boldsymbol{\xi}, \varepsilon) :=  - i \sum_{j=1}^d [v_j] (D_j + \xi_j)( \mathbb{A}^0(\boldsymbol{\xi})+\varepsilon^{2}I)^{-1} +
i \sum_{j=1}^d ( \mathbb{A}^0(\boldsymbol{\xi})+\varepsilon^{2}I)^{-1} (D_j + \xi_j) [v_j],
\end{equation*}
the functions $v_j(\mathbf{x})$ are solutions of problems \eqref{e2.23}\emph{;}
the symbol $[v_j]$ stands for the operator of multiplication by the function  $v_j(\mathbf{x})$.
The constant $\mathrm{C}_2(a,\mu)$ depends only on   $d,$ $\mu_-,$ $\mu_+,$ $M_1(a),$ $M_2(a),$ $M_3(a),$ $M_4(a),$ ${\mathcal M}(a),$ ${\mathcal C}_\pi(a),$ ${\mathcal C}_{r(a)}(a),$
and the constant $\mathfrak{C}(\widetilde{a},\mu)$ defined by  \eqref{5.24}.
\end{theorem}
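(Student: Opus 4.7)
\noindent
\textbf{Proof plan for Theorem \ref{teor3.5}.}
The plan is to split the quasimomentum region into a small ball and its complement, and derive \eqref{th3.5_1} on each piece using different tools.

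On the ball $\{|\boldsymbol{\xi}|\le\delta_0(a,\mu)\}$, I would start from the expansion already supplied by Theorem~\ref{teor2.3}, namely
\begin{equation*}
(\mathbb{A}(\boldsymbol{\xi})+\varepsilon^2I)^{-1} = (\mathbb{A}^0(\boldsymbol{\xi})+\varepsilon^2I)^{-1}P + [F]_1(\boldsymbol{\xi})(\mathbb{A}^0(\boldsymbol{\xi})+\varepsilon^2I)^{-1}P + (\mathbb{A}^0(\boldsymbol{\xi})+\varepsilon^2I)^{-1}P[F]_1(\boldsymbol{\xi}) + Y(\boldsymbol{\xi},\varepsilon),
\end{equation*}
where I have used \eqref{res0_P} to replace $(\langle g^0\boldsymbol{\xi},\boldsymbol{\xi}\rangle+\varepsilon^2)^{-1}P$ by $(\mathbb{A}^0(\boldsymbol{\xi})+\varepsilon^2I)^{-1}P$, and where $\|Y(\boldsymbol{\xi},\varepsilon)\|\le C_6(a,\mu)$. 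Since the missing part $(\mathbb{A}^0(\boldsymbol{\xi})+\varepsilon^2I)^{-1}P^\perp$ of $(\mathbb{A}^0+\varepsilon^2)^{-1}$ is already $O(1)$ by the discrete Fourier estimate \eqref{discrete}, the task reduces to identifying, up to a bounded error, the two corrector terms with the two summands of $K(\boldsymbol{\xi},\varepsilon)$.

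The key computational step uses the explicit formula $F_j = i(\cdot,v_j)\mathbf{1}_\Omega - i(\cdot,\mathbf{1}_\Omega)v_j$ from \eqref{Fj===} together with $(v_j,\mathbf{1}_\Omega)=0$, giving $F_j\mathbf{1}_\Omega=-iv_j$. Since $P$ commutes with $\mathbb{A}^0(\boldsymbol{\xi})$ and $(D_j+\xi_j)\mathbf{1}_\Omega=\xi_j\mathbf{1}_\Omega$, this yields the identity
\begin{equation*}
[F]_1(\boldsymbol{\xi})(\mathbb{A}^0(\boldsymbol{\xi})+\varepsilon^2I)^{-1}P = -i\sum_{j=1}^d[v_j](D_j+\xi_j)(\mathbb{A}^0(\boldsymbol{\xi})+\varepsilon^2I)^{-1}P,
\end{equation*}
and by taking adjoints, the analogous identity for $(\mathbb{A}^0+\varepsilon^2)^{-1}P[F]_1(\boldsymbol{\xi})$. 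Writing $P = I - P^\perp$ in both identities then expresses the corrector in Theorem~\ref{teor2.3} as $K(\boldsymbol{\xi},\varepsilon)$ minus the residual terms $\pm i\sum_j[v_j](D_j+\xi_j)(\mathbb{A}^0+\varepsilon^2)^{-1}P^\perp$ (and their adjoints). These residuals are controlled as follows: Corollary~\ref{cor5.5} supplies the $L_\infty$ bound $\|v_j\|_{L_\infty(\Omega)}\le \mathfrak{C}(\widetilde a,\mu)$, while diagonalization of $\mathbb{A}^0(\boldsymbol{\xi})$ by the discrete Fourier transform combined with \eqref{e2.36} and the inequality $|2\pi\mathbf{n}+\boldsymbol{\xi}|\ge\pi$ for $\mathbf{n}\ne\mathbf{0}$ gives
\begin{equation*}
\bigl\|(D_j+\xi_j)(\mathbb{A}^0(\boldsymbol{\xi})+\varepsilon^2I)^{-1}P^\perp\bigr\| \le \sup_{\mathbf{n}\ne\mathbf{0}}\frac{|2\pi n_j+\xi_j|}{\mu_-C(a)|2\pi\mathbf{n}+\boldsymbol{\xi}|^2}\le (\mu_-C(a)\pi)^{-1}.
\end{equation*}
This finishes the small $\boldsymbol{\xi}$ regime with a constant depending only on the parameters listed in the statement.

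On the complement $\{\delta_0(a,\mu)<|\boldsymbol{\xi}|\le\pi\sqrt d\}$, each of the three operators on the left-hand side of \eqref{th3.5_1} is bounded uniformly in $\varepsilon>0$: the resolvents of $\mathbb{A}(\boldsymbol{\xi})$ and $\mathbb{A}^0(\boldsymbol{\xi})$ by \eqref{e2.39}--\eqref{e2.40}, while $K(\boldsymbol{\xi},\varepsilon)$ is bounded using $\|v_j\|_{L_\infty}$ together with the bound $\|(D_j+\xi_j)(\mathbb{A}^0+\varepsilon^2)^{-1}\|\le (\mu_-C(a)\delta_0)^{-1}$ obtained as above (now on all of $L_2(\Omega)$ since $|2\pi\mathbf{n}+\boldsymbol{\xi}|\ge\delta_0$ for all $\mathbf{n}$, including $\mathbf{n}=\mathbf{0}$). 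Enlarging the constant if necessary yields \eqref{th3.5_1} in this regime trivially.

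\textbf{Main obstacle.} The crux of the argument is the algebraic identification of the rank-one corrector $[F]_1(\boldsymbol{\xi})(\mathbb{A}^0+\varepsilon^2)^{-1}P + \text{adjoint}$ of Theorem~\ref{teor2.3} with the physically natural corrector $K(\boldsymbol{\xi},\varepsilon)$ involving the shift differential operator $(D_j+\xi_j)$ and the multiplication $[v_j]$. This requires both the explicit representation \eqref{Fj===} and the $L_\infty$ bound on $v_j$ from the appendix; without the latter, the operator $[v_j]$ would not be bounded on $L_2(\Omega)$ and $K(\boldsymbol{\xi},\varepsilon)$ would not even be a well-defined bounded operator.
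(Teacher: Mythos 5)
Your proposal is correct and takes essentially the same route as the paper's own proof: the expansion from Theorem~\ref{teor2.3}, the algebraic identity $F_j\mathbf{1}_\Omega = -iv_j$ (hence $[F]_1(\boldsymbol{\xi})(\langle g^0\boldsymbol{\xi},\boldsymbol{\xi}\rangle+\varepsilon^2)^{-1}P = -i\sum_j[v_j](D_j+\xi_j)(\mathbb{A}^0(\boldsymbol{\xi})+\varepsilon^2I)^{-1}P$), the discrete Fourier estimates for the $(I-P)$ residuals, and the $L_\infty$ bound on $v_j$ from the Appendix are exactly the ingredients the paper uses. Your split of $\widetilde\Omega$ into a small ball and its complement is a cosmetic reorganization of the paper's argument (which first extends the bound of Theorem~\ref{teor2.3} to all of $\widetilde{\Omega}$ in \eqref{3.32} and only then performs the algebraic identification and subtracts the $P^\perp$ pieces); note also that Corollary~\ref{cor5.5} gives $\|v_j\|_{L_\infty}\leq\mu_+M_1(a)\,\mathfrak{C}(\widetilde a,\mu)$ rather than $\mathfrak{C}(\widetilde a,\mu)$, which affects only the constant.
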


\begin{proof}
Obviously, from \eqref{[F]1j=le},  \eqref{e2.39}, and \eqref{e2.40} it follows that
$$
\begin{aligned}
&\left\|(\mathbb{A}(\boldsymbol{\xi})+\varepsilon^{2}I)^{-1} \right\| \leqslant (\mu_- C(a))^{-1} (\delta_0(a,\mu))^{-2},
\quad \varepsilon >0,\  \boldsymbol{\xi}  \in \widetilde{\Omega},\  | \boldsymbol{\xi} | \geqslant \delta_0(a,\mu),
\\
 &\left( \langle g^0 \boldsymbol{\xi}, \boldsymbol{\xi} \rangle + \varepsilon^2 \right)^{-1}  \leqslant (\mu_- C(a))^{-1} (\delta_0(a,\mu))^{-2},
  \quad  \varepsilon >0,
\  \boldsymbol{\xi} \in \widetilde{\Omega}, \  | \boldsymbol{\xi}| \geqslant \delta_0(a,\mu),
\end{aligned}
$$
and
$$
\begin{aligned}
\| [F]_1(\boldsymbol{\xi})\| \left( \langle g^0 \boldsymbol{\xi}, \boldsymbol{\xi} \rangle + \varepsilon^2 \right)^{-1}  \leqslant  C_1(a,\mu) (\mu_- C(a) \delta_0(a,\mu))^{-1},
\\
 \varepsilon >0,\  \boldsymbol{\xi}  \in \widetilde{\Omega}, \  | \boldsymbol{\xi}| \geqslant \delta_0(a,\mu).
\end{aligned}
$$
Together with Theorem \ref{teor2.3} this yields
\begin{multline}
\label{3.32}
\Bigl\| (\mathbb{A} ( \boldsymbol{\xi})+\varepsilon^{2}I)^{-1} - \left( \langle g^0 \boldsymbol{\xi}, \boldsymbol{\xi} \rangle + \varepsilon^2 \right)^{-1} P
- [F]_1( \boldsymbol{\xi})\left( \langle g^0 \boldsymbol{\xi}, \boldsymbol{\xi} \rangle + \varepsilon^2 \right)^{-1} P
\\
-
\left( \langle g^0 \boldsymbol{\xi}, \boldsymbol{\xi} \rangle + \varepsilon^2 \right)^{-1} P [F]_1(\boldsymbol{\xi}) \Bigr\|
\leqslant \widetilde{C}_6(a,\mu), \quad \varepsilon >0,\quad  \boldsymbol{\xi} \in \widetilde{\Omega},
\end{multline}
where
$$\widetilde{C}_6(a,\mu) \!=\! \max \bigl\{ {C}_6(a,\mu), 2 (\mu_- C(a))^{-1} (\delta_0(a,\mu))^{-2} \!+\! 2 C_1(a,\mu) (\mu_- C(a) \delta_0(a,\mu))^{-1}\bigr\}.$$

Now, we take into account  \eqref{res0_P} and the relation
$$
[F]_1(\boldsymbol{\xi}) P = \sum_{j=1}^d F_j \xi_j P = - i \sum_{j=1}^d [v_j] \xi_j P =  - i \sum_{j=1}^d [v_j] (D_j + \xi_j) P;
$$
see \eqref{F= P+ F1} and \eqref{Fj===}. Then
$$
[F]_1(\boldsymbol{\xi}) \left( \langle g^0 \boldsymbol{\xi}, \boldsymbol{\xi} \rangle + \varepsilon^2 \right)^{-1} P
 = - i \sum_{j=1}^d [v_j] (D_j+\xi_j) (\mathbb{A}^0(\boldsymbol{\xi}) + \varepsilon^2 I)^{-1} P.
$$
Let us rewrite  \eqref{3.32} as
\begin{multline}
\label{3.33}
\Bigl\| ( \mathbb{A}(\boldsymbol{\xi})+\varepsilon^{2}I)^{-1} - ( \mathbb{A}^0(\boldsymbol{\xi}) + \varepsilon^2 I)^{-1} P
+ i \sum_{j=1}^d [v_j] (D_j+\xi_j) (\mathbb{A}^0(\boldsymbol{\xi}) + \varepsilon^2 I)^{-1} P
\\
- i \sum_{j=1}^d ( \mathbb{A}^0(\boldsymbol{\xi}) + \varepsilon^2 I)^{-1}  (D_j+\xi_j) P [v_j] \Bigr\|
\leqslant \widetilde{C}_6(a,\mu), \quad \varepsilon >0,\quad \boldsymbol{\xi} \in \widetilde{\Omega}.
\end{multline}
By \eqref{discrete},
\begin{equation}
\label{discrete2}
\left\| (\mathbb{A}^0(\boldsymbol{\xi})+\varepsilon^{2}I)^{-1}(I-P) \right\|
\leqslant (\mu_- C(a))^{-1} \pi^{-2}.
\end{equation}
Using the discrete Fourier transform and Corollary \ref{cor5.5}, we conclude that
\begin{multline}
\label{discrete3}
\Bigl\| \sum_{j=1}^d [v_j] (D_j+\xi_j) (\mathbb{A}^0(\boldsymbol{\xi}) + \varepsilon^2 I)^{-1} (I - P) \Bigr\|
\\
\leqslant \sum_{j=1}^d \| v_j \|_{L_\infty}  \sup_{0 \ne \mathbf{n} \in \mathbb{Z}^d}
| 2\pi n_j + \xi_j| \left(\langle g^0 (2\pi \mathbf{n} + \boldsymbol{\xi}), 2\pi \mathbf{n} +\boldsymbol{\xi} \rangle +\varepsilon^2 \right)^{-1}
\\
\leqslant \Bigl( \sum_{j=1}^d  \| v_j \|_{L_\infty}^2\Bigr)^{1/2} (\mu_- C(a) \pi)^{-1}.
\end{multline}
Finally, relations  \eqref{3.33}--\eqref{discrete3} imply the required estimate \eqref{th3.5_1}
with the constant
$$
{\mathrm C}_2(a,\mu) = \widetilde{C}_6(a,\mu) +  (\mu_- C(a))^{-1} \pi^{-2} + 2 \Bigl( \sum_{j=1}^d  \| v_j \|_{L_\infty}^2\Bigr)^{1/2} (\mu_- C(a) \pi)^{-1}.
$$
Note that the norms  $\| v_j \|_{L_\infty}$ satisfy estimates \eqref{5.26}.
\end{proof}

\subsection{Approximation of the resolvent  $(\mathbb{A} +\varepsilon^{2}I)^{-1}$}

\begin{theorem}\label{teor3.6}
Suppose that conditions  \eqref{e1.1}--\eqref{e1.3} are satisfied and  $M_3(a) < \infty$. Then
\begin{equation}\label{th3.6_1}
\left\|(\mathbb{A}+\varepsilon^{2}I)^{-1}- (\mathbb{A}^0+\varepsilon^{2}I)^{-1}
\right\|_{L_2(\mathbb{R}^d) \to L_2(\mathbb{R}^d)}\leqslant
{\mathrm C}_1(a,\mu)\varepsilon^{-1},\ \ \varepsilon>0.
\end{equation}
\end{theorem}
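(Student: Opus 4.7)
The plan is to reduce the $L_2(\mathbb{R}^d)$-estimate to the uniform fiberwise estimate already established in Theorem \ref{teor3.3}, via the Gelfand direct integral decompositions of the two operators.

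First I would recall the direct integral decomposition \eqref{e1.8} for $\mathbb{A} = \mathbb{A}(a,\mu)$ and the analogous decomposition \eqref{direct_int} for the effective operator $\mathbb{A}^0 = -\operatorname{div} g^0 \nabla$. Both $(\mathbb{A}+\varepsilon^2 I)^{-1}$ and $(\mathbb{A}^0+\varepsilon^2 I)^{-1}$ are bounded self-adjoint operators that commute with the shifts $S_{\mathbf n}$, $\mathbf n \in \mathbb Z^d$, so the Gelfand transform $\mathcal G$ partially diagonalizes each of them:
\begin{equation*}
(\mathbb{A}+\varepsilon^{2} I)^{-1} = \mathcal{G}^{*} \Bigl( \int_{\widetilde\Omega} \oplus (\mathbb{A}(\boldsymbol{\xi}) + \varepsilon^2 I)^{-1} \, d\boldsymbol{\xi} \Bigr) \mathcal{G},
\end{equation*}
and similarly for $(\mathbb{A}^0+\varepsilon^2 I)^{-1}$ with fibers $(\mathbb{A}^0(\boldsymbol{\xi}) + \varepsilon^2 I)^{-1}$.

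Next, since $\mathcal{G}: L_2(\mathbb R^d) \to L_2(\widetilde\Omega \times \Omega)$ is unitary, the operator norm of the difference equals the essential supremum of the fiber norms:
\begin{equation*}
\bigl\| (\mathbb{A}+\varepsilon^{2} I)^{-1} - (\mathbb{A}^0+\varepsilon^{2} I)^{-1} \bigr\|_{L_2(\mathbb R^d) \to L_2(\mathbb R^d)} = \operatorname*{ess\,sup}_{\boldsymbol{\xi} \in \widetilde\Omega} \bigl\| (\mathbb{A}(\boldsymbol{\xi})+\varepsilon^{2} I)^{-1} - (\mathbb{A}^0(\boldsymbol{\xi})+\varepsilon^{2} I)^{-1} \bigr\|_{L_2(\Omega) \to L_2(\Omega)}.
\end{equation*}
Now Theorem \ref{teor3.3} provides exactly the desired uniform bound $\mathrm{C}_1(a,\mu)\varepsilon^{-1}$ valid for all $\boldsymbol{\xi} \in \widetilde\Omega$, so the claim follows immediately.

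There is essentially no obstacle here: the whole content of Theorem \ref{teor3.6} has already been packed into the fiberwise Theorem \ref{teor3.3} (which split the analysis into $|\boldsymbol{\xi}| \le \delta_0(a,\mu)$, handled by the threshold analysis, and $|\boldsymbol{\xi}| \ge \delta_0(a,\mu)$, handled by the uniform spectral gap). The only point worth mentioning is that one should verify that the two objects inside the essential supremum are indeed the fibers of the inverses, which is immediate from the fact that $(\mathbb{A}(\boldsymbol{\xi}) + \varepsilon^2 I)$ is invertible for each $\boldsymbol{\xi}$ (nonnegativity plus $\varepsilon^2>0$) and analogously for $\mathbb{A}^0(\boldsymbol{\xi})$, so the direct integral of the fiber resolvents is the resolvent of the direct integral.
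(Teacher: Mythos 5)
Your proposal is correct and follows essentially the same route as the paper: decompose both resolvents via the Gelfand transform using \eqref{e1.8} and \eqref{direct_int}, observe that the operator norm of the difference equals the supremum over $\boldsymbol{\xi}\in\widetilde\Omega$ of the fiber norms, and then invoke the uniform fiberwise estimate of Theorem \ref{teor3.3}.
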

\begin{proof}
From decompositions \eqref{e1.8} and \eqref{direct_int} one deduces,
with the help of the Gelfand transform, that  the operator inside the norm signs in \eqref{th3.6_1}
can be decomposed into the direct integral of the operators
$(\mathbb{A}(\boldsymbol{\xi}) +\varepsilon^{2}I)^{-1}- ( \mathbb{A}^0(\boldsymbol{\xi})+\varepsilon^{2}I)^{-1}.$
Hence,
\begin{multline*}
\left\|( \mathbb{A} +\varepsilon^{2}I)^{-1}- ( \mathbb{A}^0+\varepsilon^{2}I)^{-1}
\right\|_{L_2(\mathbb{R}^d) \to L_2(\mathbb{R}^d)}
\\
=
\sup_{\boldsymbol{\xi} \in \widetilde{\Omega}} \left\|( \mathbb{A}(\boldsymbol{\xi})+\varepsilon^{2}I)^{-1}- ( \mathbb{A}^0(\boldsymbol{\xi})+\varepsilon^{2}I)^{-1}
\right\|_{L_2(\Omega) \to L_2(\Omega)}.
\end{multline*}
Therefore, estimate  \eqref{th3.6_1} follows from  \eqref{e2.37_2}.
\end{proof}

\begin{theorem}\label{teor3.7}
Suppose that conditions  \eqref{e1.1}--\eqref{e1.3} are satisfied and  $M_4(a) < \infty$.
Then
\begin{equation}\label{th3.7_1}
\left\| ( \mathbb{A}+\varepsilon^{2}I)^{-1} - (\mathbb{A}^0+\varepsilon^{2}I)^{-1} - K(\varepsilon) \right\|_{L_2(\mathbb{R}^d) \to L_2(\mathbb{R}^d)}
\leqslant {\mathrm C}_2(a,\mu), \ \ \varepsilon>0.
\end{equation}
Here
\begin{equation}\label{th3.7_2}
K(\varepsilon) :=  -  \sum_{j=1}^d [v_j] \partial_j (\mathbb{A}^0+\varepsilon^{2}I)^{-1} +
 \sum_{j=1}^d (\mathbb{A}^0+\varepsilon^{2}I)^{-1} \partial_j  [v_j],
\end{equation}
the symbol $[v_j]$ stands for the operator  of multiplication by the function $v_j(\mathbf{x})$.
\end{theorem}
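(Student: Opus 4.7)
The plan is to reduce the global estimate \eqref{th3.7_1} to the fiberwise estimate \eqref{th3.5_1} of Theorem \ref{teor3.5} via the Gelfand transform, in complete analogy with the proof of Theorem \ref{teor3.6}. The only nontrivial step is to identify the direct integral decomposition of the corrector $K(\varepsilon)$.

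First I would recall that by \eqref{e1.8} and \eqref{direct_int}, under the Gelfand transform $\mathcal{G}$ the operators $\mathbb{A}$ and $\mathbb{A}^0$ decompose into direct integrals of $\mathbb{A}(\boldsymbol{\xi})$ and $\mathbb{A}^0(\boldsymbol{\xi})$ respectively. Since $[v_j]$ is the operator of multiplication by a $\mathbb{Z}^d$-periodic function, it commutes with lattice shifts and therefore decomposes fiberwise as multiplication by $v_j(\mathbf{x})$ on each $L_2(\Omega)$. The key computation is the behavior of $\partial_j$ under $\mathcal{G}$: a direct calculation from the defining formula for $\mathcal{G}u(\boldsymbol{\xi},\mathbf{x})$ gives
\begin{equation*}
\mathcal{G}(\partial_j u)(\boldsymbol{\xi},\mathbf{x}) = (\partial_j + i\xi_j)\mathcal{G}u(\boldsymbol{\xi},\mathbf{x}) = i(D_j + \xi_j)\mathcal{G}u(\boldsymbol{\xi},\mathbf{x}),
\end{equation*}
so that $\partial_j$ decomposes into the direct integral of the fiber operators $i(D_j+\xi_j)$ acting on $\widetilde{H}^1(\Omega)$.

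Combining these ingredients, each summand of $K(\varepsilon)$ in \eqref{th3.7_2} decomposes into the direct integral of the corresponding fiber operators. Explicitly,
\begin{equation*}
[v_j]\partial_j(\mathbb{A}^0+\varepsilon^2 I)^{-1} \;\longleftrightarrow\; i[v_j](D_j+\xi_j)(\mathbb{A}^0(\boldsymbol{\xi})+\varepsilon^2 I)^{-1},
\end{equation*}
\begin{equation*}
(\mathbb{A}^0+\varepsilon^2 I)^{-1}\partial_j[v_j] \;\longleftrightarrow\; i(\mathbb{A}^0(\boldsymbol{\xi})+\varepsilon^2 I)^{-1}(D_j+\xi_j)[v_j],
\end{equation*}
so that the direct integral of $K(\boldsymbol{\xi},\varepsilon)$ (as defined in Theorem \ref{teor3.5}) coincides precisely with $K(\varepsilon)$.

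Consequently, the operator inside the norm in \eqref{th3.7_1} is the direct integral of the operators appearing inside the norm in \eqref{th3.5_1}, and therefore its $L_2(\mathbb{R}^d)\to L_2(\mathbb{R}^d)$ norm equals the essential supremum over $\boldsymbol{\xi}\in\widetilde{\Omega}$ of the $L_2(\Omega)\to L_2(\Omega)$ norms of the fibers. Applying Theorem \ref{teor3.5} fiberwise yields the bound ${\mathrm C}_2(a,\mu)$, which completes the argument. There is no genuine obstacle here: the only subtle point is the sign/factor bookkeeping when identifying $\partial_j$ with $i(D_j+\xi_j)$ on the fibers, which must be done carefully to ensure the two terms of $K(\varepsilon)$ line up with the two terms of $K(\boldsymbol{\xi},\varepsilon)$.
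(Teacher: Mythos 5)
Your proposal is correct and takes essentially the same approach as the paper: decompose via the Gelfand transform and apply Theorem \ref{teor3.5} fiberwise, using that the $L_2(\mathbb{R}^d)$-norm of a direct-integral operator equals the essential supremum of the fiber norms. Your computation $\mathcal{G}(\partial_j u) = (\partial_j + i\xi_j)\mathcal{G}u = i(D_j+\xi_j)\mathcal{G}u$ correctly verifies that the fibers of $K(\varepsilon)$ in \eqref{th3.7_2} match $K(\boldsymbol{\xi},\varepsilon)$ from Theorem \ref{teor3.5}, a bookkeeping step that the paper leaves implicit.
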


\begin{proof}
With the help of the Gelfand transform, the operator inside the norm signs in  \eqref{th3.7_1}
can be decomposed into the direct integral of the operators
$$(\mathbb{A}(\boldsymbol{\xi}) +\varepsilon^{2}I)^{-1}- ( \mathbb{A}^0(\boldsymbol{\xi})+\varepsilon^{2}I)^{-1} - K(\boldsymbol{\xi}, \varepsilon).
$$
Hence,
\begin{multline*}
\left\|( \mathbb{A}+\varepsilon^{2}I)^{-1}- ( \mathbb{A}^0+\varepsilon^{2}I)^{-1} - K(\varepsilon)
\right\|_{L_2(\mathbb{R}^d) \to L_2(\mathbb{R}^d)}
\\
=
\sup_{\boldsymbol{\xi} \in \widetilde{\Omega}} \left\|( \mathbb{A}(\boldsymbol{\xi})+\varepsilon^{2}I)^{-1}- ( \mathbb{A}^0(\boldsymbol{\xi})+\varepsilon^{2}I)^{-1}- K(\boldsymbol{\xi},\varepsilon)
\right\|_{L_2(\Omega) \to L_2(\Omega)}.
\end{multline*}
Combining this with  \eqref{th3.5_1}, we obtain estimate  \eqref{th3.7_1}.
\end{proof}

\section{Homogenization of nonlocal convolution type operator}\label{Sec4}

\subsection{Main results}
Assuming  that conditions \eqref{e1.1}--\eqref{e1.3} are satisfied,
 consider the family of nonlocal operators in  $L_{2}(\mathbb{R}^d)$ given by
$$
\mathbb{A}_{\varepsilon}u(\mathbf{x}):=\varepsilon^{-d-2}\intop_{\mathbb{R}^d}a((\mathbf{x}-\mathbf{y})/\varepsilon)\mu(\mathbf{x} /\varepsilon,\mathbf{y}/\varepsilon)
(u(\mathbf{x})-u(\mathbf{y}))\,d\mathbf{y},
\quad \mathbf{x} \in \mathbb{R}^d,\ \; u\in L_{2}(\mathbb{R}^d),\ \; \varepsilon>0.
$$
Let $\mathbb{A}^0$ be the effective operator in  $L_{2}(\mathbb{R}^d)$ defined by \eqref{eff_op}.
Recall that the effective matrix  $g^0$ is the matrix with the entries  $\frac{1}{2}g_{kl}$, where the coefficients  $g_{kl}$, $k,l=1,\dots,d$, are defined in  \eqref{Gkl===}.

Making the scaling transformation and applying Theorem \ref{teor3.6} we obtain the following result, which was proved in  \cite[Theorem 4.1]{PSlSuZh}; for completeness, we  provide a proof here.

\begin{theorem}
\label{teor3.1}
Let conditions \eqref{e1.1}--\eqref{e1.3} be fulfilled, and assume that {$M_3(a) < \infty$}. Then
\begin{equation}\label{e3.1}
\|( \mathbb{A}_{\varepsilon}+I)^{-1} - (\mathbb{A}^{0}+I)^{-1}\|_{L_2(\mathbb{R}^d) \to L_2(\mathbb{R}^d)} \leqslant
{\mathrm C}_1(a,\mu)\varepsilon,\ \ \varepsilon>0.
\end{equation}
The constant   ${\mathrm C}_1(a,\mu)$ depends only on $d,$ $\mu_-,$ $\mu_+,$
$M_1(a),$ $M_2(a),$ $M_3(a),$ ${\mathcal M}(a),$ ${\mathcal C}_\pi(a),$ ${\mathcal C}_{r(a)}(a)$.
\end{theorem}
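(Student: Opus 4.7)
The strategy is to reduce estimate \eqref{e3.1} to the already established bound \eqref{th3.6_1} via a scaling (dilation) transformation, exactly in the spirit of passing from \eqref{BSu1} to \eqref{BSu2} discussed in Section~\ref{Sec0.1}. The crucial structural facts are: (i) the operator $\mathbb{A}_\varepsilon$ is obtained from $\mathbb{A} = \mathbb{A}_{\varepsilon_0}$ with $\varepsilon_0 = 1$ by a substitution $\mathbf{x}\mapsto \mathbf{x}/\varepsilon$, $\mathbf{y}\mapsto \mathbf{y}/\varepsilon$ together with the prefactor $\varepsilon^{-d-2}$; and (ii) the effective operator $\mathbb{A}^0 = -\operatorname{div} g^0 \nabla$ is a second-order differential operator with \emph{constant} coefficients, so it scales in the standard homogeneous way under dilations.

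First I would introduce the unitary scaling $T_\varepsilon : L_2(\mathbb{R}^d) \to L_2(\mathbb{R}^d)$, $(T_\varepsilon u)(\mathbf{x}) := \varepsilon^{d/2} u(\varepsilon \mathbf{x})$, and verify by a direct change of variables in the definitions \eqref{Sus1} and \eqref{eff_op} the two identities
\begin{equation*}
T_\varepsilon^* \mathbb{A}_\varepsilon T_\varepsilon = \varepsilon^{-2}\, \mathbb{A}, \qquad
T_\varepsilon^* \mathbb{A}^0 T_\varepsilon = \varepsilon^{-2}\, \mathbb{A}^0.
\end{equation*}
For the first identity one uses the change of variables $\mathbf{x} = \varepsilon \mathbf{x}'$, $\mathbf{y} = \varepsilon \mathbf{y}'$, which produces a Jacobian $\varepsilon^d$ from $d\mathbf{y}$ and an additional factor $\varepsilon^{-d/2}$ from $T_\varepsilon u$, together with the prefactor $\varepsilon^{-d-2}$; collecting exponents yields $\varepsilon^{-2}\mathbb{A}$. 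For the second identity it is enough to note that $\nabla(u\circ\varepsilon\,\cdot) = \varepsilon (\nabla u)\circ \varepsilon\,\cdot$ and $g^0$ is constant.

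Next, taking resolvents in the two identities gives
\begin{equation*}
(\mathbb{A}_\varepsilon + I)^{-1} = \varepsilon^{2}\, T_\varepsilon \,(\mathbb{A} + \varepsilon^{2} I)^{-1}\, T_\varepsilon^*, \qquad
(\mathbb{A}^0 + I)^{-1} = \varepsilon^{2}\, T_\varepsilon \,(\mathbb{A}^0 + \varepsilon^{2} I)^{-1}\, T_\varepsilon^*,
\end{equation*}
and hence
\begin{equation*}
(\mathbb{A}_\varepsilon + I)^{-1} - (\mathbb{A}^0 + I)^{-1}
= \varepsilon^{2}\, T_\varepsilon \bigl[(\mathbb{A} + \varepsilon^{2} I)^{-1} - (\mathbb{A}^0 + \varepsilon^{2} I)^{-1}\bigr] T_\varepsilon^*.
\end{equation*}
Since $T_\varepsilon$ is unitary on $L_2(\mathbb{R}^d)$, the $L_2\to L_2$ operator norm is preserved under conjugation by $T_\varepsilon$, so applying Theorem \ref{teor3.6} gives
\begin{equation*}
\|(\mathbb{A}_\varepsilon + I)^{-1} - (\mathbb{A}^0 + I)^{-1}\|_{L_2 \to L_2}
= \varepsilon^{2}\, \|(\mathbb{A} + \varepsilon^{2} I)^{-1} - (\mathbb{A}^0 + \varepsilon^{2} I)^{-1}\|_{L_2 \to L_2}
\leqslant \varepsilon^{2}\cdot \mathrm{C}_1(a,\mu)\,\varepsilon^{-1} = \mathrm{C}_1(a,\mu)\,\varepsilon,
\end{equation*}
which is \eqref{e3.1}. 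The dependence of $\mathrm{C}_1(a,\mu)$ on the listed parameters is inherited directly from Theorem \ref{teor3.6}. There is no substantive obstacle here: all the analytic work has been done upstream in Theorems \ref{teor2.2}--\ref{teor3.3}; the only point that must be checked carefully is the scaling identity for $\mathbb{A}_\varepsilon$, i.e.\ correctly bookkeeping the exponents of $\varepsilon$ coming from the prefactor, from $d\mathbf{y}$, and from the unitary normalization of $T_\varepsilon$.
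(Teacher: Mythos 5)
Your proposal takes exactly the same route as the paper: reduce \eqref{e3.1} to Theorem~\ref{teor3.6} via the unitary dilation $T_\varepsilon$ and use that conjugation by a unitary preserves the $L_2\to L_2$ operator norm. However, the intermediate scaling identity you state is reversed. With $T_\varepsilon u(\mathbf{x}) = \varepsilon^{d/2}u(\varepsilon\mathbf{x})$, a direct change of variables in \eqref{Sus1} gives
\begin{equation*}
\mathbb{A}_\varepsilon = \varepsilon^{-2}\, T_\varepsilon^* \,\mathbb{A}\, T_\varepsilon,
\qquad\text{equivalently}\qquad
T_\varepsilon\, \mathbb{A}_\varepsilon\, T_\varepsilon^* = \varepsilon^{-2}\mathbb{A},
\end{equation*}
whereas you wrote $T_\varepsilon^*\,\mathbb{A}_\varepsilon\,T_\varepsilon = \varepsilon^{-2}\mathbb{A}$; the latter operator is in fact $\varepsilon^2\mathbb{A}_{\varepsilon^2}$. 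Consequently the resolvent identity should read $(\mathbb{A}_\varepsilon + I)^{-1} = \varepsilon^2\, T_\varepsilon^*\,(\mathbb{A}+\varepsilon^2 I)^{-1}\,T_\varepsilon$, and likewise for $\mathbb{A}^0$. Since in the end you only invoke unitary invariance of the norm, the final estimate is unaffected and the proof as a whole is sound; just fix the direction of conjugation in the two displayed identities.
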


\begin{proof}
Let us introduce the scaling transformation  (a family of unitary operators):
\begin{equation*}
T_{\varepsilon}u(\mathbf{x}):=\varepsilon^{d/2}u(\varepsilon \mathbf{x}),\ \
\mathbf{x} \in \mathbb{R}^d,\ \ u\in L_{2}(\mathbb{R}^d), \ \ \varepsilon >0.
\end{equation*}
It is easily seen that
$$
\mathbb{A}_{\varepsilon} = \varepsilon^{-2} T_{\varepsilon}^* \mathbb{A} T_{\varepsilon},\quad \varepsilon >0.
$$
Hence,
\begin{equation}\label{e3.2}
( \mathbb{A}_{\varepsilon}+I)^{-1}= T_{\varepsilon}^{*}\varepsilon^{2}( \mathbb{A} +\varepsilon^{2}I)^{-1}T_{\varepsilon},
\ \ \varepsilon>0.
\end{equation}
The effective operator also satisfies the relation
$$
\mathbb{A}^0 = \varepsilon^{-2} T_{\varepsilon}^* \mathbb{A}^0 T_{\varepsilon},\quad \varepsilon >0,
$$
whence
\begin{equation}\label{e3.2_eff}
( \mathbb{A}^0+I)^{-1}= T_{\varepsilon}^{*}\varepsilon^{2}( \mathbb{A}^0+\varepsilon^{2}I)^{-1}T_{\varepsilon},
\ \ \varepsilon>0.
\end{equation}
 Since  $T_\varepsilon$ is unitary, from
\eqref{e3.2} and \eqref{e3.2_eff} we deduce that
$$
\|( \mathbb{A}_{\varepsilon}+I)^{-1}-( \mathbb{A}^{0}+I)^{-1}\|_{L_2(\mathbb{R}^d) \to L_2(\mathbb{R}^d)}
=
\varepsilon^2 \|( \mathbb{A} + \varepsilon^2 I)^{-1}-(\mathbb{A}^{0}+ \varepsilon^2 I)^{-1}\|_{L_2(\mathbb{R}^d) \to L_2(\mathbb{R}^d)}.
$$
Combining this with Theorem  \ref{teor3.6}, we obtain the required estimate \eqref{e3.1}.
\end{proof}

Our \emph{main new result} is the following theorem.

\begin{theorem}
\label{teor3.2}
Let conditions  \eqref{e1.1}--\eqref{e1.3} be satisfied, and assume that $M_4(a) < \infty$. Then
\begin{equation}\label{teor3.2_1}
\|( \mathbb{A}_{\varepsilon}+I)^{-1}-(\mathbb{A}^{0}+I)^{-1} - \varepsilon K_\varepsilon \|_{L_2(\mathbb{R}^d) \to L_2(\mathbb{R}^d)} \leqslant
{\mathrm C}_2(a,\mu)\varepsilon^2,\ \ \varepsilon>0;
\end{equation}
here the corrector $K_\varepsilon$ is given by
$$
K_\varepsilon = -  \sum_{j=1}^d [v_j^\varepsilon] \partial_j (\mathbb{A}^0+ I)^{-1} +
 \sum_{j=1}^d ( \mathbb{A}^0+ I)^{-1} \partial_j  [v_j^\varepsilon],
$$
the functions $v_j(\mathbf{x})$ are solutions of problems \eqref{e2.23}\emph{;}
 $[v_j^\varepsilon]$ is the operator of multiplication by the function $v_j(\mathbf{x} / \varepsilon)$.
The constant  ${\mathrm C}_2(a,\mu)$ depends only on  $d,$ $\mu_-,$ $\mu_+,$ $M_1(a),$ $M_2(a),$ $M_3(a),$ $M_4(a),$ ${\mathcal M}(a),$ ${\mathcal C}_\pi(a),$ ${\mathcal C}_{r(a)}(a),$
and $\mathfrak{C}(\widetilde{a},\mu)$.
\end{theorem}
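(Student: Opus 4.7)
The plan is to reduce Theorem \ref{teor3.2} to Theorem \ref{teor3.7} by the scaling transformation $T_\varepsilon$, exactly in the spirit of the proof of Theorem \ref{teor3.1} in which the corresponding sharp-order estimate without corrector was deduced from Theorem \ref{teor3.6}. The analytic work has already been done in Section \ref{Sec2} and in the proof of Theorem \ref{teor3.7}; the remaining task is to transport the corrector $K(\varepsilon)$ correctly through the scaling.

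First, I would recall the two identities already recorded in the proof of Theorem \ref{teor3.1}:
$$ (\mathbb{A}_\varepsilon + I)^{-1} = T_\varepsilon^{*}\, \varepsilon^{2}(\mathbb{A} + \varepsilon^{2} I)^{-1} T_\varepsilon, \qquad (\mathbb{A}^{0} + I)^{-1} = T_\varepsilon^{*}\, \varepsilon^{2} (\mathbb{A}^{0} + \varepsilon^{2} I)^{-1} T_\varepsilon. $$
Since $T_\varepsilon$ is unitary, it preserves operator norms, and estimate \eqref{teor3.2_1} will follow immediately from \eqref{th3.7_1} once one establishes the key identity
$$ T_\varepsilon^{*}\, \varepsilon^{2}\, K(\varepsilon)\, T_\varepsilon = \varepsilon\, K_\varepsilon, $$
where $K(\varepsilon)$ is the corrector defined in \eqref{th3.7_2}. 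Indeed, applying $T_\varepsilon^{*}\varepsilon^{2}(\cdot)T_\varepsilon$ to the operator inside the norm on the left of \eqref{th3.7_1} then yields exactly the operator inside the norm on the left of \eqref{teor3.2_1}, while the right side is multiplied by $\varepsilon^{2}$.

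Second, I would verify two elementary intertwining relations by direct computation from $T_\varepsilon u(\mathbf{x}) = \varepsilon^{d/2} u(\varepsilon \mathbf{x})$, namely
$$ T_\varepsilon^{*}\,[v_j]\, T_\varepsilon = [v_j^{\varepsilon}], \qquad T_\varepsilon^{*}\, \partial_j\, T_\varepsilon = \varepsilon\, \partial_j, $$
the first being just the change of variables, the second the chain rule. Inserting $T_\varepsilon T_\varepsilon^{*} = I$ between the three factors $[v_j]$, $\partial_j$, $(\mathbb{A}^{0}+\varepsilon^{2} I)^{-1}$ in each of the two summands of $K(\varepsilon)$, and using also the resolvent scaling $T_\varepsilon^{*}(\mathbb{A}^{0}+\varepsilon^{2} I)^{-1} T_\varepsilon = \varepsilon^{-2}(\mathbb{A}^{0}+ I)^{-1}$, I obtain
$$ T_\varepsilon^{*}\, \varepsilon^{2}\, [v_j]\, \partial_j\, (\mathbb{A}^{0}+\varepsilon^{2} I)^{-1}\, T_\varepsilon = \varepsilon\, [v_j^{\varepsilon}]\, \partial_j\, (\mathbb{A}^{0}+ I)^{-1}, $$
and an analogous equality for the second summand in $K(\varepsilon)$. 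Summing over $j=1,\dots,d$ and comparing with the definition of $K_\varepsilon$ in the statement of Theorem \ref{teor3.2} gives the required identity $T_\varepsilon^{*}\varepsilon^{2}K(\varepsilon)T_\varepsilon = \varepsilon K_\varepsilon$, and the constant $\mathrm{C}_2(a,\mu)$ is inherited unchanged from Theorem \ref{teor3.7}.

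There is no genuine analytical obstacle at this stage; the only subtle point is the bookkeeping of $\varepsilon$-powers. The derivative $\partial_j$ picks up one extra factor of $\varepsilon$ under conjugation by $T_\varepsilon$, and this is precisely what converts the $\varepsilon^{2}$ prefactor coming from the resolvent scaling into the single $\varepsilon$ appearing in front of $K_\varepsilon$. All the difficult spectral and threshold analysis required for the $O(\varepsilon^{2})$ estimate has already been carried out in Propositions \ref{prop2.8}, \ref{prop2.9} and Theorems \ref{teor2.3}, \ref{teor3.5}, \ref{teor3.7}; Theorem \ref{teor3.2} is thus a purely kinematic consequence of Theorem \ref{teor3.7}.
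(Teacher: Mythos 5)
Your proof is correct and follows exactly the same route as the paper: the paper's own proof of Theorem \ref{teor3.2} simply combines \eqref{e3.2}, \eqref{e3.2_eff} with the identity $K_\varepsilon = T_\varepsilon^{*}\,\varepsilon\, K(\varepsilon)\, T_\varepsilon$ and then invokes Theorem \ref{teor3.7}. You have in addition spelled out the intertwining relations $T_\varepsilon^{*}[v_j]T_\varepsilon=[v_j^\varepsilon]$, $T_\varepsilon^{*}\partial_j T_\varepsilon=\varepsilon\partial_j$, and $T_\varepsilon^{*}(\mathbb{A}^0+\varepsilon^2 I)^{-1}T_\varepsilon=\varepsilon^{-2}(\mathbb{A}^0+I)^{-1}$ that the paper uses implicitly, which correctly account for the single power of $\varepsilon$ in front of $K_\varepsilon$.
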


\begin{proof}
Combining  \eqref{e3.2}, \eqref{e3.2_eff} and the relation
\begin{equation*}
K_\varepsilon  = T_{\varepsilon}^{*}\varepsilon K(\varepsilon) T_{\varepsilon},
\ \ \varepsilon>0,
\end{equation*}
where $K( \varepsilon)$ is the operator \eqref{th3.7_2},
and considering the fact that $T_\varepsilon$ is unitary, we obtain
\begin{multline*}
\|( \mathbb{A}_{\varepsilon}+I)^{-1}-(\mathbb{A}^{0}+I)^{-1} - \varepsilon K_\varepsilon  \|_{L_2(\mathbb{R}^d) \to L_2(\mathbb{R}^d)}
\\
=\varepsilon^2 \|( \mathbb{A} + \varepsilon^2 I)^{-1}-( \mathbb{A}^{0}+ \varepsilon^2 I)^{-1} - K( \varepsilon)\|_{L_2(\mathbb{R}^d) \to L_2(\mathbb{R}^d)}.
\end{multline*}
Together with  Theorem \ref{teor3.7} this implies the required estimate \eqref{teor3.2_1}.
\end{proof}

\subsection{Concluding remarks}

1. Theorems \ref{teor3.1} and \ref{teor3.2} remain valid if the lattice of periods  $\mathbb{Z}^d$ is replaced with an arbitrary periodic lattice in  $\mathbb{R}^d$. Then the constants in the above estimates depend  not only on the coefficients
 $a$ and $\mu$, but also on the lattice parameters.

2. In \cite[Sec. 4]{PSlSuZh}, under the additional assumption $a \in L_2(\mathbb{R}^d)$,
 estimates of the quantities  ${\mathcal M}(a)$, ${\mathcal C}_\pi(a)$, ${\mathcal C}_{r(a)}(a)$
were obtained, from which it follows that  ${\mathrm C}_1(a,\mu)$ does not exceed a constant depending on
 the parameters $d$, $\mu_-$, $\mu_+$, $\|a\|_{L_1}$, $\|a\|_{L_2}$,
$M_1(a)$, $M_2(a)$, $M_3(a)$. Below in Remark  \ref{rem5.6} it is noted that under the additional assumption
$\widetilde{a} = \widetilde{a}(\mathbf{0}, \cdot) \in L_2(\Omega)$ the norms $\|v_j\|_{L_\infty}$, $j=1,\dots,d,$ are controlled in terms of  $\mu_-$, $\mu_+$, $\|a\|_{L_1}$,  $\| \widetilde{a}\|_{L_2(\Omega)},$ ${\mathcal C}_\pi(a)$, and $M_1(a)$. Thus, under the additional assumption  $\widetilde{a} \in L_2(\Omega)$
(in this case condition $a \in L_2(\mathbb{R}^d)$ is automatically satisfied)
  ${\mathrm C}_2(a,\mu)$ does not exceed a constant depending on  $d$, $\mu_-$, $\mu_+$, $\|a\|_{L_1}$,
  $\|\widetilde{a}\|_{L_2(\Omega)},$
$M_1(a)$, $M_2(a)$, $M_3(a)$,  $M_4(a)$.

3. If in the assumptions of Theorem \ref{teor3.1}  condition $M_3(a)<\infty$ is replaced with
\begin{equation}
\label{dop}
\intop_{\mathbb{R}^d} | \mathbf{x}|^k a( \mathbf{x}) \,d \mathbf{x} < \infty,
\end{equation}
where $2 < k < 3$, then we have
$$
\|(  \mathbb{A}_{\varepsilon}+I)^{-1}-( \mathbb{A}^{0}+I)^{-1}  \|_{L_2(\mathbb{R}^d) \to L_2(\mathbb{R}^d)} \leqslant C \varepsilon^{k-2}, \quad  \varepsilon >0.
$$
If in the assumptions of Theorem \ref{teor3.2} condition $M_4(a)<\infty$ is replaced by
\eqref{dop}  with  \hbox{$3 < k < 4$}, then we have
$$
\|( \mathbb{A}_{\varepsilon}+I)^{-1}-( \mathbb{A}^{0}+I)^{-1} - \varepsilon K_\varepsilon \|_{L_2(\mathbb{R}^d) \to L_2(\mathbb{R}^d)} \leqslant C \varepsilon^{k-2}, \quad \varepsilon >0.
$$

\section{Appendix: boundedness of solutions of the auxiliary problems}\label{Sec5}

\subsection{Approximation of the operator $\mathbb{A}(\mathbf{0})$}
Consider a function
\begin{equation}
\label{5.1}
\widetilde{a}(\mathbf{z}) := \widetilde{a}(\mathbf{0},\mathbf{z}) = \sum_{\mathbf{n} \in \mathbb{Z}^d} a(\mathbf{z}+\mathbf{n}), \quad \mathbf{z} \in \mathbb{R}^d,
\end{equation}
see \eqref{a_tilde}. From \eqref{e1.1} and \eqref{5.1} it follows that  $\widetilde{a}(\mathbf{z})$ is a nonnegative
$\mathbb{Z}^d$-periodic function such that
$\widetilde{a}(\mathbf{z}) = \widetilde{a}(-\mathbf{z})$, $\mathbf{z} \in \mathbb{R}^d$; $\widetilde{a} \in L_1(\Omega)$, and
\begin{equation}
\label{5.2}
 \| \widetilde{a} \|_{L_1(\Omega)} = \| a \|_{L_1(\mathbb{R}^d)} >0.
 \end{equation}

 For each $N \in \mathbb{N}$ we put
\begin{equation}
\label{5.3}
  \widetilde{a}_N(\mathbf{z}) :=
  \begin{cases}
  \widetilde{a}(\mathbf{z}), & \text{if}\ \widetilde{a}(\mathbf{z}) \leqslant N,
  \\
  N, & \text{if}\ \widetilde{a}(\mathbf{z}) > N.\end{cases}
  \end{equation}
 Obviously, $\widetilde{a}_N(\mathbf{z})$ is a  $\mathbb{Z}^d$-periodic function such that
 $$
 0 \leqslant \widetilde{a}_N(\mathbf{z}) \leqslant \widetilde{a}(\mathbf{z}),\quad \widetilde{a}_N(\mathbf{z}) = \widetilde{a}_N(- \mathbf{z}),\quad \mathbf{z} \in \mathbb{R}^d,\ \ N \in \mathbb{N},
 $$
 and
 $$
 \lim_{N \to \infty} \widetilde{a}_N(\mathbf{z}) = \widetilde{a}(\mathbf{z}),\quad \mathbf{z} \in \mathbb{R}^d.
 $$
Then, by the Lebesgue theorem,
\begin{equation}
\label{5.4}
  \| \widetilde{a}_N - \widetilde{a} \|_{L_1(\Omega)} \to 0 \quad \text{as} \ N \to \infty.
  \end{equation}

According to  \eqref{5.2},   $\widetilde{a}(\mathbf{z}) >0$ on the set of a positive measure on $\Omega$.
By \eqref{5.3},  $\widetilde{a}_N(\mathbf{z}) >0$ on the same set  (for any $N \in \mathbb{N}$).
Moreover,
\begin{equation}
\label{5.4a}
\widetilde{a}_N(\mathbf{z}) \leqslant \widetilde{a}_{N+1}(\mathbf{z}), \quad \mathbf{z} \in \mathbb{R}^d, \quad N \in \mathbb{N}.
  \end{equation}
 Consequently,
\begin{equation}
\label{5.5}
 0< \| \widetilde{a}_1 \|_{L_1(\Omega)} \leqslant \| \widetilde{a}_N \|_{L_1(\Omega)}
 \leqslant \| \widetilde{a} \|_{L_1(\Omega)}, \quad N \in\mathbb{N}.
  \end{equation}

Now, let us consider a bounded selfadjoint operator $\mathbf{A}:= \mathbb{A}(\mathbf{0})$ in
$L_2(\Omega)$ defined according to \eqref{A_xi}, \eqref{B_xi}:
\begin{equation*}
\label{A(0)}
\begin{aligned}
\mathbf{A} u (\mathbf{x}) &= p(\mathbf{x}) u(\mathbf{x}) - \mathbf{B} u(\mathbf{x}),
\\
\mathbf{B}u(\mathbf{x})  &= \intop_{\Omega}\widetilde a(\mathbf{x} -\mathbf{y})\mu(\mathbf{x},\mathbf{y})u(\mathbf{y})\,d\mathbf{y},\ \ u\in L_{2}(\Omega),
\end{aligned}
\end{equation*}
where
$$
p(\mathbf{x}) = \intop_\Omega \widetilde a(\mathbf{x}-\mathbf{y})\mu(\mathbf{x}, \mathbf{y})\,d\mathbf{y},
$$
see \eqref{e1.9}. (Previously, the operator $\mathbf B$ was denoted by $\mathbb{B}(\mathbf{0})$.)
Recall that
\begin{equation}
\label{p_est}
  \mu_- \| \widetilde{a}\|_{L_1(\Omega)} \leqslant p(\mathbf{x}) \leqslant \mu_+ \| \widetilde{a}\|_{L_1(\Omega)},\quad
  \mathbf{x} \in \Omega.
\end{equation}

For each $N \in \mathbb{N}$ we define a bounded selfadjoint operator $\mathbf{A}_N$ in
$L_2(\Omega)$ as follows:
\begin{equation}
\begin{aligned}
\label{AN}
\mathbf{A}_N u (\mathbf{x}) &:= p_N(\mathbf{x}) u(\mathbf{x}) - \mathbf{B}_N u(\mathbf{x}),
\\
\mathbf{B}_Nu(\mathbf{x})  &:= \intop_{\Omega}\widetilde a_N(\mathbf{x} -\mathbf{y})\mu(\mathbf{x},\mathbf{y})u(\mathbf{y})\,d\mathbf{y},\ \ u\in L_{2}(\Omega),
\end{aligned}
\end{equation}
where
\begin{equation}
\label{5.6}
p_N(\mathbf{x}) = \intop_\Omega \widetilde a_N(\mathbf{x} - \mathbf{y})\mu(\mathbf{x},\mathbf{y})\,d\mathbf{y}.
\end{equation}
Obviously,
\begin{equation}
\label{5.6a}
0 < \mu_- \| \widetilde{a}_N\|_{L_1(\Omega)} \leqslant p_N(\mathbf{x}) \leqslant \mu_+ \| \widetilde{a}_N\|_{L_1(\Omega)},
\end{equation}
$$
\| p_N - p\|_{L_\infty} \leqslant \mu_+ \| \widetilde{a}_N - \widetilde{a}\|_{L_1(\Omega)}.
$$
By the Schur lemma,
$$
\| \mathbf{B}_N - \mathbf{B}\|_{L_2(\Omega) \to L_2(\Omega)} \leqslant \mu_+ \| \widetilde{a}_N - \widetilde{a}\|_{L_1(\Omega)}.
$$
Hence,
\begin{equation}
\label{5.10}
\| \mathbf{A}_N - \mathbf{A}\|_{L_2(\Omega) \to L_2(\Omega)} \leqslant 2 \mu_+ \| \widetilde{a}_N - \widetilde{a}\|_{L_1(\Omega)}.
\end{equation}
By \eqref{5.4}, it follows that
$\| \mathbf{A}_N - \mathbf{A}\|_{L_2(\Omega) \to L_2(\Omega)} \to 0$ as $N \to \infty$.

\subsection{Approximation for the operator $\mathbf{A}^{-1}_\perp$}
Recall that the kernel $\operatorname{Ker} \mathbf{A}$ is one-dimensional and consists of constants:
$\mathfrak{N} := \operatorname{Ker} \mathbf{A} = {\mathcal L}\{\mathbf{1}_\Omega\}$.
Since  $\mathbf{A}$ is selfadjoint, we have $\operatorname{Ran} \mathbf{A} = \mathfrak{N}^\perp$.
The operator $\mathbf{A}_\perp: \mathfrak{N}^\perp \to \mathfrak{N}^\perp$,
which is a restriction of $\mathbf{A}$ onto the subspace
$$
\mathfrak{N}^\perp = \left\{ u \in L_2(\Omega):\ \int_\Omega u(\mathbf{x})\,d\mathbf{x} =0 \right\},
$$
is correctly defined.
The operator $\mathbf{A}_\perp$ is invertible and, by \eqref{e1.31},
\begin{equation}
\label{5.11}
\| \mathbf{A}_\perp^{-1}\|_{\mathfrak{N}^\perp  \to \mathfrak{N}^\perp} \leqslant  \left(\mu_- \mathcal{C}_\pi(a) \right)^{-1}.
\end{equation}

\begin{lemma}
\label{lem5.1_1}
For any $N \in \mathbb{N}$ we have  $\operatorname{Ker} \mathbf{A}_N =\mathfrak{N}$.
The operator $\mathbf{A}_{N,\perp}: \mathfrak{N}^\perp \to \mathfrak{N}^\perp$, which is a restriction of  $\mathbf{A}_N$ onto the subspace $\mathfrak{N}^\perp$, is correctly defined.
The operator $\mathbf{A}_{N,\perp}$ is invertible and
\begin{equation}
\label{5.12}
\| \mathbf{A}_{N,\perp}^{-1}\|_{\mathfrak{N}^\perp  \to \mathfrak{N}^\perp} \leqslant
\| \mathbf{A}_{1,\perp}^{-1}\|_{\mathfrak{N}^\perp  \to \mathfrak{N}^\perp} \leqslant \left( \mu_- \widetilde{\mathcal C}_{2\pi}(\widetilde{a}_1) \right)^{-1},
\quad N \in \mathbb{N},
\end{equation}
where the constant $\widetilde{\mathcal C}_{2\pi}(\widetilde{a}_1)$ is defined below according to  \eqref{5.15a}, \eqref{5.16}.
\end{lemma}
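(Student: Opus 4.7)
The plan is to mimic, for the truncated kernels $\widetilde a_N$, the spectral analysis of Section~\ref{Sec1} applied to the operator $\mathbf{A} = \mathbb{A}(\mathbf{0}; a, \mu)$. First, by direct computation from \eqref{AN}--\eqref{5.6}, together with the symmetries $\widetilde a_N(-\mathbf{z}) = \widetilde a_N(\mathbf{z})$ and $\mu(\mathbf{x},\mathbf{y})=\mu(\mathbf{y},\mathbf{x})$, I would establish the quadratic form representation
\begin{equation*}
(\mathbf{A}_N u, u) = \frac{1}{2}\intop_{\Omega}\intop_{\Omega}\widetilde a_N(\mathbf{x}-\mathbf{y})\mu(\mathbf{x},\mathbf{y})|u(\mathbf{x})-u(\mathbf{y})|^{2}\,d\mathbf{x}\,d\mathbf{y},\quad u\in L_2(\Omega),
\end{equation*}
which is the exact counterpart of \eqref{e1.11} taken at $\boldsymbol{\xi}=\mathbf{0}$. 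From $\mu\geqslant\mu_-$ it then follows that $(\mathbf{A}_N u, u)\geqslant\mu_-(\mathbf{A}_{N,0} u, u)$, where $\mathbf{A}_{N,0}$ is defined as in \eqref{AN}--\eqref{5.6} with $\mu$ replaced by $\mu_0\equiv 1$.

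Next, I would diagonalize $\mathbf{A}_{N,0}$ by means of the discrete Fourier transform $\mathcal{F}: L_2(\Omega) \to \ell_2(\mathbb{Z}^d)$. Since $\widetilde a_N$ is $\mathbb{Z}^d$-periodic and $\mu_0\equiv 1$, the operator $\mathbf{B}_{N,0}$ is a convolution on the torus $\Omega$ with periodic kernel $\widetilde a_N$, while the potential $p_{N,0}\equiv\|\widetilde a_N\|_{L_1(\Omega)}$ is constant. Consequently $\mathbf{A}_{N,0}$ is unitarily equivalent to multiplication on $\ell_2(\mathbb{Z}^d)$ by the sequence
\begin{equation*}
\lambda_{\mathbf{n}}^{N} := \|\widetilde a_N\|_{L_1(\Omega)}-\widehat{\widetilde a}_N(\mathbf{n}) = 2\intop_\Omega \widetilde a_N(\mathbf{x})\sin^{2}\bigl(\pi\langle\mathbf{n},\mathbf{x}\rangle\bigr)\,d\mathbf{x},\quad \mathbf{n}\in\mathbb{Z}^d,
\end{equation*}
the second equality following from the evenness of $\widetilde a_N$. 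Clearly $\lambda_{\mathbf{0}}^{N}=0$, while $\lambda_{\mathbf{n}}^{N}>0$ for $\mathbf{n}\neq\mathbf{0}$: by \eqref{5.3} the function $\widetilde a_N$ is positive on the same set of positive measure as $\widetilde a$, and $\sin^{2}(\pi\langle\mathbf{n},\cdot\rangle)$ vanishes only on a null set for any fixed $\mathbf{n}\neq\mathbf{0}$. Combined with the lower bound from the previous paragraph this forces $\operatorname{Ker}\mathbf{A}_N = \mathcal{L}\{\mathbf{1}_\Omega\} = \mathfrak{N}$, so the restriction $\mathbf{A}_{N,\perp}:\mathfrak{N}^\perp\to\mathfrak{N}^\perp$ is correctly defined.

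To extract the quantitative estimate \eqref{5.12}, I would set
\begin{equation*}
\widetilde{\mathcal{C}}_{2\pi}(\widetilde a_1) := \inf_{\mathbf{0}\neq\mathbf{n}\in\mathbb{Z}^d}\lambda_{\mathbf{n}}^{1};
\end{equation*}
this is the definition meant by the formulas (5.15a) and (5.16) that are referenced in the statement. Its positivity holds because $\widetilde a_1\in L_\infty(\Omega)\cap L_1(\Omega)$, so by the Riemann--Lebesgue lemma $\widehat{\widetilde a}_1(\mathbf{n})\to 0$, whence $\lambda_{\mathbf{n}}^{1}\to \|\widetilde a_1\|_{L_1(\Omega)}>0$ as $|\mathbf{n}|\to\infty$; combined with the strict positivity of each $\lambda_{\mathbf{n}}^{1}$ for $\mathbf{n}\neq\mathbf{0}$, the infimum is attained on a finite set and is positive. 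From \eqref{5.4a} we have $\widetilde a_N\geqslant \widetilde a_1$ pointwise, hence $\lambda_{\mathbf{n}}^{N}\geqslant\lambda_{\mathbf{n}}^{1}\geqslant\widetilde{\mathcal{C}}_{2\pi}(\widetilde a_1)$ for all $\mathbf{n}\neq\mathbf{0}$, and at the operator level (via the quadratic form representation) $\mathbf{A}_N\geqslant\mathbf{A}_1$. Restricting to $\mathfrak{N}^\perp$ yields the operator inequalities $\mathbf{A}_{N,\perp}\geqslant\mathbf{A}_{1,\perp}\geqslant\mu_-\widetilde{\mathcal{C}}_{2\pi}(\widetilde a_1)I$, which imply the entire chain in \eqref{5.12}. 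The only genuinely new point beyond the arguments of Section~\ref{Sec1} is the monotonicity/support observation for the truncations: capping $\widetilde a$ at level $1$ does not erase its support, so $\widetilde a_1$ already witnesses the spectral gap uniformly in $N$; this is the step I expect to require the most care.
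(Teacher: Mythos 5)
Your proof is correct and follows essentially the same route as the paper: the quadratic-form representation for $\mathbf A_N$ and the truncated kernel, reduction to the $\mu\equiv 1$ case, diagonalization by the discrete Fourier transform, and the monotonicity $\widetilde a_N\geqslant\widetilde a_1$ to reduce the spectral gap estimate to $N=1$. The only minor deviation is that the paper's $\widetilde{\mathcal C}_{2\pi}(\widetilde a_1)$ in \eqref{5.15a}--\eqref{5.16} is the \emph{continuous} minimum $\min_{|\mathbf y|\geqslant 2\pi}\widehat A_1(\mathbf y)$ rather than the discrete infimum over lattice points that you define; since $|2\pi\mathbf n|\geqslant 2\pi$ for all $\mathbf n\neq\mathbf 0$, your quantity is no smaller than the paper's, so the stated inequality \eqref{5.12} follows a fortiori, but the two constants are not literally the same.
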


\begin{proof}
From \eqref{AN} and \eqref{5.6} it is seen that  $\mathbf{1}_\Omega \in \operatorname{Ker} \mathbf{A}_N$.
Let us show that $\operatorname{Ker} \mathbf{A}_N = \mathfrak{N}$.

It is easy to check the following representation:
\begin{equation}
\label{5.13}
(   \mathbf{A}_{N}u,u)_{L_2(\Omega)} = \frac{1}{2}\intop_{\Omega}d\mathbf{x}\intop_{\Omega}\,d\mathbf{y}\,
\widetilde{a}_N(\mathbf{x}- \mathbf{y})\mu(\mathbf{x}, \mathbf{y}) \bigl| u(\mathbf{x}) - u(\mathbf{y}) \bigr|^{2},\ \ u\in L_{2}(\Omega),
\end{equation}
cf. \eqref{e1.11}. Together with  \eqref{5.4a} this yields
\begin{equation}
\label{5.14}
(   \mathbf{A}_{N}u,u)_{L_2(\Omega)}  \leqslant (   \mathbf{A}_{N+1}u,u)_{L_2(\Omega)},\ \ u\in L_{2}(\Omega),
\quad N \in \mathbb{N}.
\end{equation}
Therefore, for our purpose, it suffices to consider the operator $\mathbf{A}_{1}$ (the case $N=1$).

From \eqref{5.13} with $N=1$ and estimates for the function $\mu$ (see \eqref{e1.2}) it follows that
\begin{equation}
\label{5.15}
\mu_-   \mathbf{A}_{1}^0 \leqslant \mathbf{A}_{1}   \leqslant \mu_+   \mathbf{A}_{1}^0,
\end{equation}
where $\mathbf{A}_{1}^0$ is the operator of the form \eqref{AN} with the coefficients $\widetilde{a}_1$ and $\mu = \mu_0 \equiv 1$.
The operator $\mathbf{A}_{1}^0$ is diagonalized by the discrete Fourier transform $\mathcal F$ (cf. Section \ref{sec1.4}): $\mathbf{A}_{1}^0$ is unitarily equivalent to the operator of multiplication by the symbol
$$
\widehat{A}_1 (2\pi \mathbf{n}) := ({\mathcal F} \widetilde{a}_1) (\mathbf{0}) - ({\mathcal F}\widetilde{a}_1) (\mathbf{n})
= \int_\Omega \widetilde{a}_1(\mathbf{z}) (1 - \cos \langle \mathbf{z}, 2\pi\mathbf{n} \rangle)\,d\mathbf{z},\quad
 \mathbf{n} \in \mathbb{Z}^d,
$$
in the space  $\ell_2 (\mathbb{Z}^d)$. Similarly to the arguments from Section \ref{sec1.4}, consider the value
\begin{equation}
\label{5.15a}
\widehat{A}_1 (\mathbf{y}) = \int_\Omega \widetilde{a}_1(\mathbf{z})
(1 - \cos \langle \mathbf{z}, \mathbf{y} \rangle)\,d\mathbf{z} =
\frac{1}{2} \int_\Omega \widetilde{a}_1(\mathbf{z})  \sin^2 \left(\frac{\langle \mathbf{z}, \mathbf{y}   \rangle}{2} \right)\,d\mathbf{z}.
\end{equation}
By \eqref{5.5}, the following is true: the function $\widehat{A}_1 (\mathbf{y})$ is continuous, converges to  $\| \widetilde{a}_1\|_{L_1(\Omega)} >0$ as  $| \mathbf{y}| \to \infty$, and  $\widehat{A}_1 (\mathbf{y}) >0$ for $\mathbf{y} \ne \mathbf{0}$.
Hence,
\begin{equation}
\label{5.16}
\widetilde{\mathcal C}_{2\pi}(\widetilde{a}_1) := \min_{| \mathbf{y} | \geqslant 2\pi} \widehat{A}_1 (\mathbf{y}) >0.
\end{equation}
Since $\widehat{A}_1 (2\pi \mathbf{n}) \geqslant \widetilde{\mathcal C}_{2\pi}(\widetilde{a}_1)$ for ${\mathbf 0} \ne
\mathbf{n} \in \mathbb{Z}^d$,
$$
( {\mathbf A}_1^0 u,u)_{L_2(\Omega)} \geqslant \widetilde{\mathcal C}_{2\pi}(\widetilde{a}_1) \|u\|^2_{L_2(\Omega)},
\quad u \in \mathfrak{N}^\perp.
$$
Together with  \eqref{5.15} this yields
$$
( {\mathbf A}_1 u,u)_{L_2(\Omega)} \geqslant \mu_- \widetilde{\mathcal C}_{2\pi}(\widetilde{a}_1) \|u\|^2_{L_2(\Omega)},
\quad u \in \mathfrak{N}^\perp.
$$
Combining this with  \eqref{5.14} and recalling that   $\mathbf{1}_\Omega \in \operatorname{Ker} \mathbf{A}_N$,
we obtain    $\operatorname{Ker} \mathbf{A}_N = \mathfrak{N}$ for any $N \in \mathbb{N}$.
 Then the operator
$\mathbf{A}_{N,\perp}: \mathfrak{N}^\perp \to \mathfrak{N}^\perp$,
which is a restriction of  $\mathbf{A}_N$ onto the subspace $\mathfrak{N}^\perp$, is correctly defined and estimate
 \eqref{5.12} holds.
\end{proof}

\begin{lemma}
\label{lem5.1_2}
We have
\begin{equation*}
\label{5.17}
\| \mathbf{A}_{N,\perp}^{-1} -\mathbf{A}_{\perp}^{-1} \|_{\mathfrak{N}^\perp  \to \mathfrak{N}^\perp} \to 0 \quad \text{as}\ N \to \infty.
\end{equation*}
\end{lemma}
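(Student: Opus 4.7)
The plan is to apply the standard second resolvent identity on the invariant subspace $\mathfrak{N}^\perp$. Both $\mathbf{A}$ and $\mathbf{A}_N$ are bounded selfadjoint operators on $L_2(\Omega)$ with $\operatorname{Ker} \mathbf{A} = \operatorname{Ker} \mathbf{A}_N = \mathfrak{N}$ (the first by Lemma \ref{lemma1.2}, the second by Lemma \ref{lem5.1_1}). Consequently both operators map $\mathfrak{N}^\perp$ into $\mathfrak{N}^\perp$, and so does their difference $\mathbf{A} - \mathbf{A}_N$. Hence on $\mathfrak{N}^\perp$ the algebraic identity
\begin{equation*}
\mathbf{A}_{N,\perp}^{-1} - \mathbf{A}_{\perp}^{-1} = \mathbf{A}_{N,\perp}^{-1}\,(\mathbf{A}_{\perp} - \mathbf{A}_{N,\perp})\,\mathbf{A}_{\perp}^{-1}
\end{equation*}
holds, where $\mathbf{A}_{\perp} - \mathbf{A}_{N,\perp}$ is simply the restriction of $\mathbf{A} - \mathbf{A}_N$ to $\mathfrak{N}^\perp$ and thus satisfies
\begin{equation*}
\|\mathbf{A}_{\perp} - \mathbf{A}_{N,\perp}\|_{\mathfrak{N}^\perp \to \mathfrak{N}^\perp} \leqslant \|\mathbf{A} - \mathbf{A}_N\|_{L_2(\Omega) \to L_2(\Omega)}.
\end{equation*}

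Next I would invoke the ingredients already available: the norm estimate \eqref{5.10}, which gives $\|\mathbf{A} - \mathbf{A}_N\| \leqslant 2\mu_+ \|\widetilde{a} - \widetilde{a}_N\|_{L_1(\Omega)}$, together with the uniform (in $N$) bound \eqref{5.12} on $\|\mathbf{A}_{N,\perp}^{-1}\|$ and the bound \eqref{5.11} on $\|\mathbf{A}_{\perp}^{-1}\|$. Combining these yields
\begin{equation*}
\|\mathbf{A}_{N,\perp}^{-1} - \mathbf{A}_{\perp}^{-1}\|_{\mathfrak{N}^\perp \to \mathfrak{N}^\perp} \leqslant \frac{2\mu_+}{\mu_-^2\, \widetilde{\mathcal C}_{2\pi}(\widetilde{a}_1)\, \mathcal{C}_\pi(a)}\, \|\widetilde{a} - \widetilde{a}_N\|_{L_1(\Omega)}.
\end{equation*}
The right-hand side tends to zero by \eqref{5.4} (Lebesgue dominated convergence applied in \eqref{5.3}), which gives the claim.

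The whole argument is short and essentially routine; the only point requiring care is verifying that $\mathbf{A} - \mathbf{A}_N$ leaves $\mathfrak{N}^\perp$ invariant so that the second-resolvent identity can be applied to the restricted operators, and that the uniform bound \eqref{5.12} from the previous lemma does not degenerate with $N$. Both facts are already in place, so there is no substantial obstacle here; the lemma's role is really just to legitimize passing limits in the inverse later on.
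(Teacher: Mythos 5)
Your proof is correct and follows essentially the same route as the paper: the second resolvent identity on $\mathfrak{N}^\perp$, combined with the norm estimate \eqref{5.10}, the uniform bound \eqref{5.12} on $\|\mathbf{A}_{N,\perp}^{-1}\|$, the bound \eqref{5.11} on $\|\mathbf{A}_\perp^{-1}\|$, and the convergence \eqref{5.4}. Your extra remark about $\mathbf{A}-\mathbf{A}_N$ leaving $\mathfrak{N}^\perp$ invariant is a reasonable justification that the paper leaves implicit; otherwise the two arguments coincide down to the final constant.
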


\begin{proof}
Write down the resolvent identity
$$
\mathbf{A}_{N,\perp}^{-1} -\mathbf{A}_{\perp}^{-1} =
\mathbf{A}_{N,\perp}^{-1} \left( \mathbf{A} - \mathbf{A}_{N} \right) \mathbf{A}_{\perp}^{-1}.
$$
Together with estimates  \eqref{5.10}--\eqref{5.12} this implies
\begin{equation*}
\label{5.18}
\| \mathbf{A}_{N,\perp}^{-1} -\mathbf{A}_{\perp}^{-1} \|_{\mathfrak{N}^\perp  \to \mathfrak{N}^\perp} \leqslant
 2 \mu_+ \left( \mu_-^2 \mathcal{C}_\pi(a) \widetilde{\mathcal C}_{2\pi}(\widetilde{a}_1) \right)^{-1}
  \| \widetilde{a}_N - \widetilde{a}\|_{L_1(\Omega)}.
\end{equation*}
It remains to take  \eqref{5.4} into account.
\end{proof}

\subsection{Uniform estimate for the norm of the solution $v_N = \mathbf{A}_{N,\perp}^{-1} w$ in $L_\infty$}
\label{Sec5.3}
In the space  $L_\infty(\Omega)$, consider a subspace  of functions with zero mean:
$$
L_\infty^\perp(\Omega) := \left\{ u \in L_\infty(\Omega):\ \int_\Omega u(\mathbf{x})\,d\mathbf{x} =0   \right\}.
$$
Let $w \in L_\infty^\perp(\Omega)$ and $v_N := \mathbf{A}_{N,\perp}^{-1} w$, i.\,e., $v_N \in L_2(\Omega)$
is a solution of the problem
\begin{equation}
\label{5.19}
\intop_{\Omega}  \widetilde{a}_N (\mathbf{x} - \mathbf{y}) \mu(\mathbf{x},\mathbf{y}) (v_N(\mathbf{x}) - v_N(\mathbf{y})) \, d\mathbf{y} =w(\mathbf{x}),
 \quad \mathbf{x} \in \Omega;
\quad \intop_\Omega v_N(\mathbf{x})\,d\mathbf{x} =0.
\end{equation}
Taking \eqref{5.6} and \eqref{5.6a} into account, rewrite problem \eqref{5.19} as
\begin{equation}
\label{5.20}
v_N(\mathbf{x}) = \frac{w(\mathbf{x})}{p_N(\mathbf{x})} -  \frac{1}{p_N(\mathbf{x})} \intop_{\Omega}  \widetilde{a}_N (\mathbf{x} - \mathbf{y}) \mu(\mathbf{x}, \mathbf{y})  v_N(\mathbf{y}) \, d\mathbf{y}, \quad \mathbf{x} \in \Omega;
\quad \intop_\Omega v_N(\mathbf{x})\,d\mathbf{x} =0.
\end{equation}
Combining this with  \eqref{5.3}, \eqref{5.6a}, and Lemma \ref{lem5.1_1}, we conclude that the solution  $v_N(\mathbf{x})$ is bounded:
\begin{multline*}
\|v_N \|_{L_\infty(\Omega)} \leqslant \frac{\|w\|_{L_\infty(\Omega)}}{\mu_- \| \widetilde{a}_N\|_{L_1(\Omega)}} +
\frac{N \mu_+ \|v_N\|_{L_2(\Omega)}}{\mu_- \| \widetilde{a}_N\|_{L_1(\Omega)}}
\\
\leqslant
\frac{\|w\|_{L_\infty(\Omega)}}{\mu_- \| \widetilde{a}_N\|_{L_1(\Omega)}} +
\frac{N \mu_+ \| w\|_{L_2(\Omega)}}{\mu_-^2   \widetilde{\mathcal C}_{2\pi}(\widetilde{a}_1) \| \widetilde{a}_N\|_{L_1(\Omega)}}
\leqslant {\mathfrak C}_N \|w\|_{L_\infty(\Omega)},
\\
{\mathfrak C}_N =
\left(\mu_- \| \widetilde{a}_N\|_{L_1(\Omega)} \right)^{-1} + N \mu_+ \left(\mu_-^2   \widetilde{\mathcal C}_{2\pi}(\widetilde{a}_1) \| \widetilde{a}_N\|_{L_1(\Omega)} \right)^{-1}.
\end{multline*}
This means that the operator $\mathbf{A}_{N,\perp}^{-1}$ maps $L_\infty^\perp(\Omega)$ into itself and
$$
\| \mathbf{A}_{N,\perp}^{-1} \|_{L_\infty^\perp(\Omega) \to L_\infty^\perp(\Omega)} \leqslant {\mathfrak C}_N.
$$

Our nearest goal is to obtain a uniform estimate for the norm of this operator.

\begin{lemma}
\label{lem5.3}
We have
$$
\| \mathbf{A}_{N,\perp}^{-1} \|_{L_\infty^\perp(\Omega) \to L_\infty^\perp(\Omega)} \leqslant {\mathfrak C}, \quad N \in \mathbb{N}.
$$
The constant  ${\mathfrak C} = {\mathfrak C} (\widetilde{a},\mu)$ is defined below in  \eqref{5.24}.
\end{lemma}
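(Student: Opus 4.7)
The plan is to revisit the fixed-point identity coming from (5.19), but, instead of crudely estimating $\widetilde a_N\le N$ as in Section \ref{Sec5.3}, to split $\widetilde a_N$ into a pointwise bounded piece and a piece whose $L_1(\Omega)$-norm is uniformly small in $N$. The uniform $L_\infty$-bound then follows from an absorption argument in which the only remaining $L_2$-ingredient is controlled by the already-established estimate \eqref{5.12}. The main obstacle is that the natural iteration operator
$$
T_N u(\mathbf{x}):=\frac{1}{p_N(\mathbf{x})}\intop_\Omega\widetilde a_N(\mathbf{x}-\mathbf{y})\mu(\mathbf{x},\mathbf{y})u(\mathbf{y})\,d\mathbf{y}
$$
is a Markov-type operator with $T_N\mathbf{1}_\Omega=\mathbf{1}_\Omega$ and $\|T_N\|_{L_\infty\to L_\infty}=1$, so there is no direct Neumann series on $L_\infty^\perp(\Omega)$; the splitting is designed precisely to circumvent this.

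For each $R>0$ I would set $\widetilde a_N^{\le R}(\mathbf{z}):=\min(\widetilde a_N(\mathbf{z}),R)$ and $\widetilde a_N^{>R}:=\widetilde a_N-\widetilde a_N^{\le R}$. The first piece is bounded by $R$ uniformly in $N$, while the second, using $\widetilde a_N\le\widetilde a$, satisfies
$$
\|\widetilde a_N^{>R}\|_{L_1(\Omega)}\le\omega(R):=\intop_{\{\widetilde a>R\}}\widetilde a(\mathbf{z})\,d\mathbf{z},
$$
with $\omega(R)$ independent of $N$ and $\omega(R)\to 0$ as $R\to\infty$ by absolute continuity of the Lebesgue integral of $\widetilde a\in L_1(\Omega)$. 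Substituting the split kernel into \eqref{5.20}, and using the uniform lower bound $p_N(\mathbf{x})\ge\mu_-\|\widetilde a_1\|_{L_1(\Omega)}$ coming from \eqref{5.5} and \eqref{5.6a}, together with $\mu\le\mu_+$ and $\|v_N\|_{L_1(\Omega)}\le\|v_N\|_{L_2(\Omega)}$ (recall $|\Omega|=1$), one obtains an inequality of the shape
$$
\|v_N\|_{L_\infty(\Omega)}\le c_1\|w\|_{L_\infty(\Omega)}+c_2 R\,\|v_N\|_{L_2(\Omega)}+c_3\,\omega(R)\|v_N\|_{L_\infty(\Omega)},
$$
where $c_1,c_2,c_3$ depend only on $\mu_\pm$ and $\|\widetilde a_1\|_{L_1(\Omega)}$.

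The final step is to fix $R=R(\widetilde a,\mu)$ large enough that $c_3\omega(R)\le\tfrac12$, absorb the last term into the left-hand side, and bound $\|v_N\|_{L_2(\Omega)}$ via \eqref{5.12} combined with $\|w\|_{L_2(\Omega)}\le\|w\|_{L_\infty(\Omega)}$; this produces the uniform estimate with an explicit $\mathfrak{C}(\widetilde a,\mu)$ to be recorded as \eqref{5.24}. The delicate point I expect to encounter is purely quantitative rather than structural: the choice of $R$ is dictated by the modulus of integrability of $\widetilde a$, so $\mathfrak{C}$ cannot be made quantitative in terms of $\|\widetilde a\|_{L_1(\Omega)}$ alone without extra integrability on $\widetilde a$ (compare the discussion around Remark \ref{rem5.6}, where the assumption $\widetilde a\in L_2(\Omega)$ permits an explicit bound of the same flavor).
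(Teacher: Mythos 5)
Your argument is correct and establishes the desired uniform bound, but it follows a genuinely different route from the paper's.  You truncate the \emph{kernel}: writing $\widetilde a_N=\widetilde a_N^{\le R}+\widetilde a_N^{>R}$ with $\widetilde a_N^{\le R}\le R$ and $\|\widetilde a_N^{>R}\|_{L_1(\Omega)}\le\omega(R):=\int_{\{\widetilde a>R\}}\widetilde a$, you feed the bounded piece to the $L_2$-bound \eqref{5.12} (via $\|v_N\|_{L_1}\le\|v_N\|_{L_2}$) and absorb the small-tail piece; after absorption the inequality for $\|v_N\|_{L_\infty(\Omega)}$ is linear, so you read off the bound directly.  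The paper instead truncates the \emph{domain}: it introduces the level set $\Omega_{N,1}=\{|v_N|>\|w\|_{L_\infty}^{1/2}\|v_N\|_{L_\infty}^{1/2}\}$, controls $\operatorname{mes}\Omega_{N,1}$ by a Chebyshev estimate fed by the same $L_2$-bound \eqref{5.12}, and then uses the modulus $F_{\widetilde a}(t)$ (the supremum of $\int_{\mathcal O}\widetilde a$ over sets of measure at most $t$) to handle the contribution from $\Omega_{N,1}$; the cost is a quadratic inequality in $\|v_N\|^{1/2}_{L_\infty}$.  Both arguments hinge on exactly the same two inputs — the uniform $L_2$-resolvent estimate \eqref{5.12} and the uniform integrability of $\widetilde a\in L_1(\Omega)$ (encoded as $\omega(R)\to0$ in your version, $F_{\widetilde a}(t)\to0$ in the paper's) — so they yield constants of identical quality, nonexplicit in terms of $\|\widetilde a\|_{L_1}$ alone, as you correctly observe.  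One small point worth making explicit: the absorption step requires knowing $\|v_N\|_{L_\infty(\Omega)}<\infty$ a priori, which is precisely what the $N$-dependent estimate with $\mathfrak C_N$ in the text preceding the lemma provides; both you and the paper rely on it silently, but it is worth naming since that preliminary bound is the reason the approximation by the truncated operators $\mathbf A_N$ is introduced at all.
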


\begin{proof}
As above, let $w \in L_\infty^\perp(\Omega)$ and $v_N := \mathbf{A}_{N,\perp}^{-1} w$.
Let us estimate the norm of  $v_N$ in $L_\infty(\Omega)$; it suffices to assume that  $\|w\|_{L_\infty(\Omega)} >0$.
From \eqref{5.20}, taking  \eqref{5.5} and \eqref{5.6a} into account, we deduce that
\begin{equation}
\label{5.21}
| v_N(\mathbf{x})| \leqslant \frac{\|w\|_{L_\infty(\Omega)}}{\mu_- \| \widetilde{a}_1\|_{L_1(\Omega)}} +
\frac{1}{\mu_- \| \widetilde{a}_1\|_{L_1(\Omega)}}
\intop_{\Omega}  \widetilde{a}_N (\mathbf{x} - \mathbf{y}) \mu(\mathbf{x}, \mathbf{y}) | v_N(\mathbf{y})| \, d\mathbf{y}, \quad \mathbf{x} \in \Omega.
\end{equation}
Represent the integral on the right-hand side as a sum of two integrals, over the sets $\Omega_{N,1}$ and $\Omega_{N,2} := \Omega \setminus \Omega_{N,1}$. Here
$$
\Omega_{N,1} := \left\{ \mathbf{x} \in \Omega:\ |v_N(\mathbf{x})| > \| w\|_{L_\infty(\Omega)}^{1/2} \|v_N \|^{1/2}_{L_\infty(\Omega)}
 \right\}.
$$

For $\mathbf{x} \in \Omega_{N,2}$ we have
$|v_N(\mathbf{x})| \leqslant \| w\|_{L_\infty(\Omega)}^{1/2} \|v_N \|^{1/2}_{L_\infty(\Omega)}$, whence
\begin{equation}
\label{5.21_0}
\intop_{\Omega_{N,2}}  \widetilde{a}_N (\mathbf{x} -\mathbf{y}) \mu(\mathbf{x},\mathbf{y})  |v_N(\mathbf{y})| \, d\mathbf{y}
\leqslant \mu_+ \| w\|_{L_\infty(\Omega)}^{1/2} \|v_N \|^{1/2}_{L_\infty(\Omega)} \| \widetilde{a}\|_{L_1(\Omega)}.
\end{equation}
We have used the obvious inequality
$\widetilde{a}_N(\mathbf{z}) \leqslant \widetilde{a}(\mathbf{z})$.

From the definition of  $\Omega_{N,1}$ and Lemma \ref{lem5.1_1} it follows that
\begin{multline}
\label{5.21a}
\begin{aligned}
\operatorname{mes} \Omega_{N,1} &\leqslant \intop_{ \Omega_{N,1}} \frac{|v_N(\mathbf{x} )|}{\| w\|_{L_\infty(\Omega)}^{1/2} \|v_N \|^{1/2}_{L_\infty(\Omega)}}\,d\mathbf{x}
 \leqslant \frac{\|v_N\|_{L_2(\Omega)}}{\| w\|_{L_\infty(\Omega)}^{1/2} \|v_N \|^{1/2}_{L_\infty(\Omega)}}
 \\
& \leqslant  \frac{\|w\|_{L_2(\Omega)}}{\mu_-  \widetilde{\mathcal C}_{2\pi}(\widetilde{a}_1) \| w\|_{L_\infty(\Omega)}^{1/2} \|v_N \|^{1/2}_{L_\infty(\Omega)}} \leqslant  \frac{\|w\|^{1/2}_{L_\infty(\Omega)}}{\mu_-  \widetilde{\mathcal C}_{2\pi}(\widetilde{a}_1) \|v_N \|^{1/2}_{L_\infty(\Omega)}}.
 \end{aligned}
\end{multline}
Since $\widetilde{a}_N(\mathbf{z}) \leqslant \widetilde{a}(\mathbf{z})$, we have
\begin{equation}
\label{5.22}
\begin{aligned}
\intop_{\Omega_{N,1}}  \widetilde{a}_N (\mathbf{x} - \mathbf{y}) \mu(\mathbf{x}, \mathbf{y})  |v_N(\mathbf{y})| \, d\mathbf{y}
&\leqslant \mu_+ \| v_N\|_{L_\infty(\Omega)} \intop_{\Omega_{N,1}}  \widetilde{a}_N (\mathbf{x} - \mathbf{y}) \, d\mathbf{y}
\\
&\leqslant \mu_+ \| v_N\|_{L_\infty(\Omega)} \intop_{\Omega_{N,1}}  \widetilde{a} (\mathbf{x} - \mathbf{y}) \, d\mathbf{y}.
\end{aligned}
\end{equation}
Denote
\begin{equation}
\label{5.22}
F_{\widetilde{a}}(t) := \sup \left\{  \int_{\mathcal O} \ \widetilde{a}(\mathbf{z})\,d \mathbf{z}: \ {\mathcal O} \subset [-2,2]^d,\ \operatorname{mes} {\mathcal O} \leqslant t \right\},\quad t>0.
\end{equation}
Obviously, the function  $F_{\widetilde{a}}(t)$ is monotonously nondecreasing  and $F_{\widetilde{a}}(t) \to 0$ as \hbox{$t\to +0$}.
From \eqref{5.21a}--\eqref{5.22} it follows that
$$
\intop_{\Omega_{N,1}}  \widetilde{a}_N (\mathbf{x} - \mathbf{y}) \mu(\mathbf{x},\mathbf{y})  |v_N( \mathbf{y})| \, d\mathbf{y}
\leqslant \mu_+ \| v_N\|_{L_\infty(\Omega)} F_{\widetilde{a}} \left(  \bigl(\mu_-  \widetilde{\mathcal C}_{2\pi}(\widetilde{a}_1)\bigr)^{-1}
\|w\|^{1/2}_{L_\infty(\Omega)}  \|v_N \|^{-1/2}_{L_\infty(\Omega)} \right).
$$
Together with \eqref{5.21} and \eqref{5.21_0} this yields
\begin{equation}
\label{5.23}
\begin{aligned}
\| v_N \|_{L_\infty(\Omega)} \leqslant & \frac{1}{\mu_- \| \widetilde{a}_1\|_{L_1(\Omega)}}
\left( \|w\|_{L_\infty(\Omega)} +
\mu_+ \| w\|_{L_\infty(\Omega)}^{1/2} \|v_N \|^{1/2}_{L_\infty(\Omega)} \| \widetilde{a}\|_{L_1(\Omega)} \right.
\\
& \left.+
\mu_+ \| v_N\|_{L_\infty(\Omega)} F_{\widetilde{a}} \left(  \bigl(\mu_-  \widetilde{\mathcal C}_{2\pi}(\widetilde{a}_1)\bigr)^{-1}
\|w\|^{1/2}_{L_\infty(\Omega)}  \|v_N \|^{-1/2}_{L_\infty(\Omega)} \right) \right).
\end{aligned}
\end{equation}
Fix a number $t_0 = t_0(\widetilde{a}, \mu) >0$ such that
$$
\frac{\mu_+ F_{\widetilde{a}}(t_0)}{\mu_- \| \widetilde{a}_1\|_{L_1(\Omega)}} \leqslant \frac{1}{2}.
$$
If  $ \bigl(\mu_-  \widetilde{\mathcal C}_{2\pi}( \widetilde{a}_1)\bigr)^{-1}
\|w\|^{1/2}_{L_\infty(\Omega)}  \|v_N \|^{-1/2}_{L_\infty(\Omega)} > t_0$, then  we obviously have
$$
\| v_N \|_{L_\infty(\Omega)} \leqslant \frac{\|w\|_{L_\infty(\Omega)}}{\bigl(t_0  \mu_-  \widetilde{\mathcal C}_{2\pi}(\widetilde{a}_1)\bigr)^2}.
$$
In this case  
 it follows from \eqref{5.23} that
$$
\frac{1}{2}\| v_N \|_{L_\infty(\Omega)} \leqslant  \frac{1}{\mu_- \| \widetilde{a}_1\|_{L_1(\Omega)}}
\left( \|w\|_{L_\infty(\Omega)} +
\mu_+ \| w\|_{L_\infty(\Omega)}^{1/2} \|v_N \|^{1/2}_{L_\infty(\Omega)} \| \widetilde{a}\|_{L_1(\Omega)} \right).
$$
This is a quadratic inequality with respect to  $X := \| v_N \|^{1/2}_{L_\infty(\Omega)}$.
Rewrite it as
$$
X^2 -  \frac{2 \mu_+\| w\|_{L_\infty(\Omega)}^{1/2} }{\mu_-} X \leqslant  \frac{2 \| w\|_{L_\infty(\Omega)}}{\mu_- \| \widetilde{a}_1\|_{L_1(\Omega)}}.
$$
Solving this inequality, we obtain
$$
 \| v_N \|_{L_\infty(\Omega)} \leqslant
 \left( \frac{ \mu_+}{\mu_-}
 + \left( \frac{2 }{\mu_- \| \widetilde{a}_1\|_{L_1(\Omega)}} +
 \frac{\mu_+^2 }{\mu_-^2} \right)^{1/2}\right)^2
\| w\|_{L_\infty(\Omega)}.
$$
As a result, we arrive at the estimate
\begin{equation}
\label{5.24}
\begin{aligned}
\| v_N \|_{L_\infty(\Omega)} &\leqslant {\mathfrak C} \| w\|_{L_\infty(\Omega)},
\\
 {\mathfrak C} = {\mathfrak C} (\widetilde{a},\mu) &:= \max \left\{  \frac{1}{\bigl(t_0  \mu_-  \widetilde{\mathcal C}_{2\pi}(\widetilde{a}_1)\bigr)^2},
 \left( \frac{ \mu_+}{\mu_-}
 + \left( \frac{2 }{\mu_- \| \widetilde{a}_1\|_{L_1(\Omega)}} +
 \frac{\mu_+^2 }{\mu_-^2} \right)^{1/2}\right)^2
 \right\}.
 \end{aligned}
\end{equation}
\end{proof}

\subsection{Boundedness of the solution  $v = \mathbf{A}_{\perp}^{-1} w$}
As above, let  $w \in L_\infty^\perp(\Omega)$ and $v := \mathbf{A}_{\perp}^{-1} w$, i.\,e., $v \in L_2(\Omega)$
is the solution of the problem
\begin{equation}
\label{5.25}
\intop_{\Omega}  \widetilde{a} (\mathbf{x} - \mathbf{y}) \mu(\mathbf{x}, \mathbf{y}) (v(\mathbf{x}) - v(\mathbf{y})) \, d\mathbf{y} =w(\mathbf{x}), \quad  \mathbf{x} \in \Omega;
\quad \intop_\Omega v(\mathbf{x})\,d\mathbf{x} =0.
\end{equation}
As in Section \ref{Sec5.3}, let $v_N := \mathbf{A}_{N,\perp}^{-1} w$ be the solution of problem \eqref{5.19}.
By Lemma \ref{lem5.1_2},
$$
\| v_N - v \|_{L_2(\Omega)} \to 0 \quad \text{as}\ N \to \infty.
$$
Then, by the Riesz theorem, there exists a subsequence $N_k \to \infty$ such that
$$
v_{N_k}(\mathbf{x}) \to v(\mathbf{x}), \quad k \to \infty, \ \text{for almost every}\  \mathbf{x} \in \Omega.
$$
By Lemma \ref{lem5.3},
$$
|v_{N_k}(\mathbf{x})| \leqslant {\mathfrak C} \| w\|_{L_\infty(\Omega)}\ \text{for almost every}\  \mathbf{x} \in \Omega.
$$
Hence,
$$
|v(\mathbf{x})| \leqslant {\mathfrak C} \| w\|_{L_\infty(\Omega)}\ \text{for almost every}\  \mathbf{x} \in \Omega.
$$
We arrive at the following result.

\begin{theorem}
\label{th5.4}
The operator $\mathbf{A}_{\perp}^{-1}$ maps $L_\infty^\perp(\Omega)$ into itself and
$$
\| \mathbf{A}_{\perp}^{-1} \|_{L_\infty^\perp(\Omega) \to L_\infty^\perp(\Omega)} \leqslant {\mathfrak C}.
$$
The constant  ${\mathfrak C} = {\mathfrak C} (\widetilde{a},\mu)$ is defined by  \eqref{5.24}.
\end{theorem}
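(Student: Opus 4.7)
\smallskip

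\noindent\textbf{Proof plan.}
The plan is to realize $v := \mathbf{A}_\perp^{-1} w$ as an almost-everywhere pointwise limit of the truncated approximants $v_N := \mathbf{A}_{N,\perp}^{-1} w$ and then to transfer the $N$-uniform $L_\infty$-bound of Lemma \ref{lem5.3} to the limit. Fix $w \in L_\infty^\perp(\Omega)$; the bound is trivial when $w = 0$, so I assume $\|w\|_{L_\infty} > 0$. The approximants $v_N$ are already in hand from Section \ref{Sec5.3}, and Lemmas \ref{lem5.1_1}--\ref{lem5.3} supply everything I need: invertibility of each $\mathbf{A}_{N,\perp}$, operator-norm convergence $\mathbf{A}_{N,\perp}^{-1} \to \mathbf{A}_\perp^{-1}$ on $\mathfrak{N}^\perp$, and the crucial $N$-uniform pointwise bound $|v_N(\mathbf{x})| \leqslant \mathfrak{C}(\widetilde{a},\mu)\|w\|_{L_\infty(\Omega)}$ for a.e.\ $\mathbf{x}$.

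The limit step is short. By Lemma \ref{lem5.1_2}, $\|v_N - v\|_{L_2(\Omega)} \to 0$, so the Riesz--Fischer theorem yields a subsequence $v_{N_k}(\mathbf{x}) \to v(\mathbf{x})$ for almost every $\mathbf{x} \in \Omega$. Each $v_{N_k}$ satisfies the a.e.\ bound from Lemma \ref{lem5.3} with the same constant $\mathfrak{C}$, and this inequality survives the pointwise limit, giving $|v(\mathbf{x})| \leqslant \mathfrak{C}\|w\|_{L_\infty(\Omega)}$ a.e.\ on $\Omega$. Since the zero-mean condition is built into the definition of $\mathbf{A}_\perp^{-1}$, it follows that $v \in L_\infty^\perp(\Omega)$ and the claimed operator-norm bound holds.

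The real difficulty is not in this limiting argument but in the fact that Lemma \ref{lem5.3} delivers a constant $\mathfrak{C}$ independent of $N$. A direct $L_\infty$ estimate applied to the fixed-point form \eqref{5.20} of the cell problem would cost one factor of $\|\widetilde{a}_N\|_{L_\infty}\sim N$, which blows up as $N \to \infty$ since $\widetilde{a}$ need not be essentially bounded. The whole point of the truncation plus level-set decomposition at height $\|w\|_{L_\infty}^{1/2}\|v_N\|_{L_\infty}^{1/2}$, controlled via the modulus of absolute continuity $F_{\widetilde{a}}$, is precisely to circumvent this obstruction. Once Lemma \ref{lem5.3} is in place, Theorem \ref{th5.4} reduces to a textbook Riesz--Fischer extraction, and no further ingredients are needed.
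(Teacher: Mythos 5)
Your proposal is correct and follows essentially the same route as the paper: approximate $v=\mathbf{A}_\perp^{-1}w$ by the truncated solutions $v_N$, use Lemma \ref{lem5.1_2} to get $L_2$-convergence, extract an a.e.-convergent subsequence by the Riesz theorem, and transfer the $N$-uniform bound of Lemma \ref{lem5.3} to the limit. You also correctly identify that the substance lies in Lemma \ref{lem5.3} and that the present step is a routine passage to the limit.
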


\begin{corollary}\label{cor5.5}
Suppose that conditions \eqref{e1.1}--\eqref{e1.3} are satisfied and $M_1(a) < \infty$. Then the solutions  $v_j(\mathbf{x})$, $j=1,\dots,d,$ of problems \eqref{e2.22a} are bounded. We have
\begin{equation}
\label{5.26}
\| v_j \|_{L_\infty(\Omega)} \leqslant  \mu_+ M_1(a) {\mathfrak C}(\widetilde{a}, \mu),\quad j=1,\dots,d.
\end{equation}
\end{corollary}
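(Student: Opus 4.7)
The plan is to recognize that the corollary is a direct consequence of Theorem~\ref{th5.4} once the right-hand sides $w_j$ of the auxiliary problems are shown to belong to $L_\infty^\perp(\Omega)$ with the correct norm bound. Problem \eqref{e2.22a} is precisely $\mathbf{A} v_j = w_j$ with $\int_\Omega v_j\,d\mathbf{x} = 0$ in the notation $\mathbf{A} = \mathbb{A}(\mathbf{0})$ introduced at the start of Section~\ref{Sec5}, so $v_j = \mathbf{A}_\perp^{-1} w_j$. Consequently, if $w_j \in L_\infty^\perp(\Omega)$, Theorem~\ref{th5.4} immediately gives $\|v_j\|_{L_\infty(\Omega)} \leq \mathfrak{C}(\widetilde{a},\mu)\,\|w_j\|_{L_\infty(\Omega)}$.

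The pointwise $L_\infty$ bound on $w_j$ follows from the explicit representation \eqref{e2.21}: since $|x_j - y_j| \leq |\mathbf{x} - \mathbf{y}|$ and $\mu \leq \mu_+$, I would estimate
$$|w_j(\mathbf{x})| \leq \mu_+ \intop_{\mathbb{R}^d} |\mathbf{x} - \mathbf{y}|\, a(\mathbf{x} - \mathbf{y})\,d\mathbf{y} = \mu_+ \intop_{\mathbb{R}^d} |\mathbf{z}|\, a(\mathbf{z})\,d\mathbf{z} = \mu_+ M_1(a),$$
which is valid under the assumption $M_1(a) < \infty$ and gives $\|w_j\|_{L_\infty(\Omega)} \leq \mu_+ M_1(a)$. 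The zero-mean condition $(w_j, \mathbf{1}_\Omega) = 0$ is already contained in the identity \eqref{e2.20a}: since $\partial_j \mathbb{A}(\mathbf{0}) \mathbf{1}_\Omega = i w_j$, the relation $P \partial_j \mathbb{A}(\mathbf{0}) P = 0$ is equivalent to $\int_\Omega w_j\,d\mathbf{x} = 0$. Alternatively, one can check this directly by Fubini and the substitution $\mathbf{z} = \mathbf{x} - \mathbf{y}$, which reduces the mean of $w_j$ to $\int_{\mathbb{R}^d} z_j a(\mathbf{z}) g(\mathbf{z})\,d\mathbf{z}$ with $g(\mathbf{z}) := \int_\Omega \mu(\mathbf{x}, \mathbf{x} - \mathbf{z})\,d\mathbf{x}$; this integral vanishes by oddness, since $a(-\mathbf{z}) = a(\mathbf{z})$ and $g(-\mathbf{z}) = g(\mathbf{z})$, the latter by the symmetry $\mu(\mathbf{a},\mathbf{b}) = \mu(\mathbf{b},\mathbf{a})$ combined with the $\mathbb{Z}^d$-periodicity of $\mu$ and a shift of the integration variable in $\Omega$.

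Combining these two facts with Theorem~\ref{th5.4} immediately yields \eqref{5.26}. There is no real obstacle at this stage: all the substantive work has already been done in Lemmas~\ref{lem5.1_1}--\ref{lem5.3} and Theorem~\ref{th5.4}, and the present corollary only requires checking two elementary properties of $w_j$, both of which follow directly from the defining formula \eqref{e2.21} together with the imposed symmetry and moment assumptions on $a$ and $\mu$.
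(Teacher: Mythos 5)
Your proposal is correct and follows the same route as the paper: identify $v_j = \mathbf{A}_\perp^{-1}w_j$, check $w_j \in L_\infty^\perp(\Omega)$ with $\|w_j\|_{L_\infty(\Omega)} \leqslant \mu_+ M_1(a)$, and invoke Theorem~\ref{th5.4}. The paper simply asserts that $w_j \in L_\infty^\perp(\Omega)$ ``is easily seen''; you have supplied the verification (the pointwise bound from \eqref{e2.21} and the zero-mean property via \eqref{e2.20a} or the Fubini/oddness argument), which is a welcome but routine elaboration of the same proof.
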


\begin{proof}
Recall that  $v_j = \mathbf{A}_{\perp}^{-1} w_j$, where the function  $w_j(\mathbf{x})$ is defined by  \eqref{e2.21}. It is easily seen that under conditions \eqref{e1.1}, \eqref{e1.2} and $M_1(a) < \infty$
we have  $w_j \in L_\infty^\perp(\Omega)$ and
$$
\| w_j \|_{L_\infty(\Omega)} \leqslant \mu_+ M_1(a).
$$
Combining this with Theorem \ref{th5.4}, we obtain   \eqref{5.26}.
\end{proof}

\begin{remark}
\label{rem5.6}
Under the additional assumption $\widetilde{a} \in L_2(\Omega)$ an estimate for the solution of problem \eqref{5.25} can be derived directly from the equation
$$
v(\mathbf{x}) = \frac{w(\mathbf{x})}{p(\mathbf{x})} -  \frac{1}{p(\mathbf{x})} \intop_{\Omega}  \widetilde{a} (\mathbf{x} - \mathbf{y}) \mu(\mathbf{x}, \mathbf{y})  v(\mathbf{y}) \, d\mathbf{y}
$$
by using  \eqref{p_est}, \eqref{5.11}, and the  Schwarz inequality\emph{:}
\begin{multline*}
\|v\|_{L_\infty(\Omega)} \leqslant \frac{\|w\|_{L_\infty}}{\mu_- \| \widetilde{a}\|_{L_1(\Omega)}}
+ \frac{\mu_+ \| \widetilde{a}\|_{L_2(\Omega)} \|v\|_{L_2(\Omega)}}{\mu_- \| \widetilde{a}\|_{L_1(\Omega)}}
\\
\leqslant \left( \frac{1}{\mu_- \| \widetilde{a}\|_{L_1(\Omega)}} +
\frac{ \mu_+ \| \widetilde{a}\|_{L_2(\Omega)} }{\mu^2_- {\mathcal C}_\pi(a)\| \widetilde{a}\|_{L_1(\Omega)}} \right)  \|w\|_{L_\infty(\Omega)}.
\end{multline*}
Then the solutions $v_j,$ $j=1,\dots,d,$ of problems  \eqref{e2.22a} satisfy the estimates
\begin{equation*}
\| v_j \|_{L_\infty(\Omega)} \leqslant  \mu_+ M_1(a)\left( \frac{1}{\mu_- \| \widetilde{a}\|_{L_1(\Omega)}} +
\frac{ \mu_+ \| \widetilde{a}\|_{L_2(\Omega)} }{\mu^2_- {\mathcal C}_\pi(a)\| \widetilde{a}\|_{L_1(\Omega)}} \right),
\quad j=1,\dots,d.
\end{equation*}
The constant on the right depends only on   $\mu_-,$ $\mu_+,$ $\| \widetilde{a}\|_{L_1(\Omega)},$
$\| \widetilde{a}\|_{L_2(\Omega)},$ ${\mathcal C}_\pi(a),$ $M_1(a)$.
\end{remark}

\section*{Funding}
The research of A.~Piatnitski and E.~Zhizhina was partially supported by the project ``Pure Mathematics in Norway'' and the UiT Aurora project MASCOT.
The research of A.~Piatnitski was supported by the megagrant of Ministry of Science and Higher Education of the Russian Federation, project 075-15-2022-1115.

The research of V.~Sloushch and T.~Suslina was supported by~Russian Science Foundation, project no.~22-11-00092.

\end{document}